\newtheorem{theorem}{Theorem}[section]
\newtheorem{lemma}[theorem]{Lemma}
\newtheorem{corollary}[theorem]{Corollary}
\newtheorem{definition}[theorem]{Definition}
\newtheorem{proposition}[theorem]{Proposition}
\theoremstyle{definition}
\newtheorem{example}[theorem]{Example}
\newtheorem{remark}[theorem]{Remark}
\newtheorem{convention}[theorem]{Convention}
\numberwithin{equation}{section}
\newcommand{\Z}{\mathbb{Z}} 
\newcommand{\R}{\mathbb{R}}
\newcommand{\C}{\mathbb{C}}
\newcommand{\alg}{\mathcal{A}}
\def\rrep{\widetilde{\mbox{Rep}}}
\def\fig#1{\raisebox{-2.2ex}{\includegraphics[height=5.2ex]{#1}}}
\def\figgg#1{\raisebox{-2.2ex}{\includegraphics[width=5.2ex]{#1}}}
\begin{document}

\title[Satellite ruling polynomials, DGA representations, and colored
HOMFLY-PT]{Satellite ruling polynomials, DGA representations, and the
  colored HOMFLY-PT polynomial}

\author{Caitlin Leverson}
\address{Caitlin Leverson, School of Mathematics, Georgia Institute of Technology} \email{leverson@math.gatech.edu}

\author{Dan Rutherford}
\address{Dan Rutherford, Department of Mathematical Sciences, Ball State University} \email{rutherford@bsu.edu}

\begin{abstract}
  We establish relationships between two classes of invariants of
  Legendrian knots in $\R^3$: Representation numbers of the
  Chekanov-Eliashberg DGA and satellite ruling polynomials.  For
  positive permutation braids, $\beta \subset J^1S^1$, we give a
  precise formula in terms of representation numbers for the
  $m$-graded ruling polynomial $R^m_{S(K,\beta)}(z)$ of the satellite
  of $K$ with $\beta$ specialized at $z=q^{1/2}-q^{-1/2}$ with $q$ a
  prime power, and we use this formula to prove that arbitrary
  $m$-graded satellite ruling polynomials, $R^m_{S(K,L)}$, are
  determined by the Chekanov-Eliashberg DGA of $K$.  Conversely, for
  $m\neq 1$, we introduce an $n$-colored $m$-graded ruling polynomial,
  $R^m_{n,K}(q)$, in strict analogy with the $n$-colored HOMFLY-PT
  polynomial, and show that the total $n$-dimensional $m$-graded
  representation number of $K$ to $\mathbb{F}_q^n$, $\mbox{Rep}_m(K,
  \mathbb{F}_q^n)$, is exactly equal to $R^m_{n,K}(q)$.  In the case
  of $2$-graded representations, we show that
  $R^2_{n,K}=\mbox{Rep}_2(K, \mathbb{F}_q^n)$ arises as a
  specialization of the $n$-colored HOMFLY-PT polynomial.
\end{abstract}

\maketitle

\section{Introduction} \label{sec:Intro}

The study of Legendrian knots in the standard contact $\R^3$
benefits from interactions with both symplectic topology and classical
knot theory.  For instance, the symplectic theory of holomorphic
curves applied to a Legendrian knot $K \subset \R^3$ produces an
invariant known as the Chekanov-Eliashberg DGA (differential graded
algebra), $(\mathcal{A}(K),\partial)$, that is capable of
distinguishing knots with the same underlying framed knot type.  On
the other hand, a connection with topological knot theory is made via
the $m$-graded ruling polynomial invariants, $R^m_K(z)$ (see
\cite{Ch}).  As they are characterized by skein relations, the ruling
polynomials can be regarded as Legendrian cousins of well known
topological knot invariants.  This is especially natural since, when
$m=1$ or $2$, $R^m_K(z)$ arises as a specialization of the 2-variable
Kauffman or HOMFLY-PT polynomial (see \cite{R1,R2}).

Ruling polynomials are defined via a count of certain combinatorial
objects called normal rulings, and there is a well studied
correspondence between augmentations of $(\mathcal{A}(K), \partial)$,
i.e. DGA maps into a field $(\mathbb{F},0)$, and normal rulings of
$K$, cf. \cite{F,FI,HenryR,L1,NRSS,NgSabloff,Sab1, Su}.  In
\cite{NgR}, this was strengthened to an equivalence between the
existence of
\begin{itemize}
\item[(a)] finite dimensional representations of
  $(\mathcal{A}(K), \partial)$ over $\Z/2$, and
\item[(b)] normal rulings of links obtained from $K$ via the
  Legendrian satellite construction.
\end{itemize}
Both of these items may be refined to arrive at whole collections of
Legendrian knot invariants.  From (a), normalized counts of
representations of $(\mathcal{A},\partial)$ on vector spaces over
finite fields produce Legendrian invariant {\it representation
  numbers}; see Section \ref{sec:RepNumber}.  From (b), for any fixed
$L \subset J^1S^1$, considering the satellite link $S(K,L)$ provides
an invariant of a Legendrian knot $K \subset \R^3$ via the assignment
$K \mapsto R^m_{S(K,L)}(z)$. We refer to the latter invariants as {\it
  satellite ruling polynomials}.  The results of the present article
show that satellite ruling polynomials provide information about
representation counts of the Chekanov-Eliashberg algebra and at the
same time are determined by such counts.  In addition, we strengthen
the connection with topological knot invariants by showing that a
natural specialization of the $n$-colored HOMFLY-PT polynomial
recovers certain representation numbers of $K$.

We now give a more detailed overview of our results, under the
simplifying assumption that the Legendrian knot $K\subset J^1\R$ has
rotation number $r(K)=0$.  In the following, $m\geq 0$ is a
non-negative integer and $\mathbb{F}_q$ is a finite field of order
$q$.  When $m$ is odd, we assume $\mathit{char}(\mathbb{F}_q) = 2$.
In Section \ref{sec:RepNumber}, we introduce Legendrian isotopy
invariant {\it $m$-graded representation numbers} denoted
$\mbox{Rep}_m(K, (V,d), B)$ that count representations of
$(\mathcal{A},\partial)$ on a differential graded vector space $(V,d)$
over $\mathbb{F}_q$ where certain distinguished generators, $t_i$,
associated to basepoints on $K$ have their images restricted by a
choice of subset $B \subset GL(V)$.  We also consider {\it reduced
  representation numbers}, $\rrep_m(K,(V,d),B)$, that are renormalized
so the unknot takes the value $1$, and a {\it total $n$-dimensional
  representation number}, $\mbox{Rep}_m(K, \mathbb{F}_q^n)$, that
counts all $m$-graded representations to $\mathbb{F}_q^n$ (with
grading concentrated in degree $0$) without restriction on the image
of the $t_i$.

In Theorem \ref{thm:A}, we provide a precise formula for a certain
class of satellite ruling polynomials in terms of reduced
representation numbers.  A special case (see
Proposition~\ref{prop:permutation}) is the following:

\medskip

\noindent {\bf Theorem A.} 
{\it Let $\beta \subset J^1S^1$ be an $n$-stranded positive
  permutation braid $\beta \subset J^1S^1$.  Then,}
\[
R^m_{S(K,\beta)}(z)\big\vert_{z=q^{1/2}-q^{-1/2}} =
q^{-\lambda_m(\beta)/2}(q^{1/2}-q^{-1/2})^{-n}\sum_{d}\rrep_m(K,
(V_\beta,d), B^m_\beta),
\] {\it where $V_\beta$ is a graded vector space over $\mathbb{F}_q$
  associated to $\beta$ (in Section \ref{sec:Sat}); $B_\beta \subset
  GL(V_\beta)$ is the path subset of $\beta$ (defined in Section
  \ref{sec:Path}); $\lambda_m(\beta)$ is a certain signed count of the
  crossings of $\beta$ (defined in Section \ref{sec:Sat}); and the
  summation is over all strictly upper triangular differentials,
  $d:V_\beta \rightarrow V_\beta$, with $\deg(d) = +1$.}

\medskip

\noindent Combining Theorem \ref{thm:A} with a skein relation
argument, we establish in Corollary \ref{cor:A} that for any fixed $L
\subset J^1S^1$ the satellite ruling polynomial $R^m_{S(K,L)}$ is
determined by the Chekanov-Eliashberg DGA of $K$.

We also obtain in Theorem \ref{thm:color} a converse-type formula for
representation numbers in terms of ruling polynomials that we state
here as:

\medskip

\noindent {\bf Theorem B.}  {\it Assume $m\neq 1$.}
{\it The total $n$-dimensional $m$-graded representation number
  satisfies}
\[
\mbox{Rep}_m(K, \mathbb{F}_q^n) = R^m_{n, K}(q),
\] {\it where $R^m_{n,K}(q)$ denotes the $n$-colored $m$-graded ruling
  polynomial (defined in Section \ref{sec:color}).}

\medskip

\noindent Here, the $n$-colored ruling polynomial is a specific linear
combination of satellite ruling polynomials that is so named because
of a strong formal relation with, $P_{n,K}(a,q)$, the $n$-colored
HOMFLY-PT polynomial: The linear combination that defines $R^m_{n,K}$
and obtains the total $n$-dimensional representation number is {\it
  exactly} the same linear combination of braids that defines the
$n$-colored HOMFLY-PT polynomial!

The case $m=2$ is of special interest since the results of \cite{R2}
show the $2$-graded ruling polynomial is a specialization of the
HOMFLY-PT polynomial.  In Theorem \ref{thm:nHOMFLY}, we extend this
result to see that $R^2_{n, K}(q)=\mbox{Rep}_2(K, \mathbb{F}_q^n)$
arises as a specialization of $P_{n,K}(a,q)$,
\[
P_{n,K}(a,q)\big\vert_{a^{-1}=0} = \mbox{Rep}_2(K,\mathbb{F}_q^n).
\]
In the process, we generalize the HOMFLY-PT estimate for $tb(K)
+|r(K)|$ from \cite{FT} to an estimate
\begin{equation} \label{eq:tbr} \mathit{tb}(K) +|r(K)| \leq
  \frac{1}{n} \deg_a \widehat{P}_{n,K}(a,q),
\end{equation} 
where $\widehat{P}_{n,K}$ is the framing independent version of
$P_{n,K}$. Moreover, in Corollary \ref{cor:sharp} we observe
additional results parallel to \cite{R2}; for example, the inequality
(\ref{eq:tbr}) is sharp as an estimate for $tb(K)$ if and only if $K$
has an $n$-dimensional $2$-graded representation.

The colored HOMFLY-PT polynomials (where in general the color is by an
arbitrary partition) compute the quantum
$U_q(\mathfrak{sl}_n)$-invariants of a knot associated to arbitrary
irreducible representations of $\mathfrak{sl}_n$, cf. \cite{AM,
  Aiston}.  This relationship between representations of the
Chekanov-Eliashberg DGA and quantum invariants looks tantalizing, and
it would be interesting to find a more direct connection between the
representation theories of $(\mathcal{A},\partial)$ and
$U_q(\mathfrak{sl}_n)$.

\subsection{Organization} The remainder of the article is organized as
follows.  In Section \ref{sec:background} we collect relevant
background material concerning the Chekanov-Eliashberg DGA and ruling
polynomials.  In particular, the theorem of Henry and the second
author from \cite{HenryR} that relates augmentation numbers with
ordinary ruling polynomials is fundamental to our approach.  Section
\ref{sec:RepNumber} presents definitions of representation numbers and
establishes their invariance under Legendrian isotopy.  In Section
\ref{sec:Path}, we obtain several results about the path matrices,
originally introduced by K{\'a}lm{\'a}n \cite{Kalman}, of positive
braid Legendrians $\beta \subset J^1S^1$.  The path matrix of $\beta$
appears prominently in the calculation of the Chekanov-Eliashberg DGA
of the satellite $S(K,\beta)$ that is given in Section \ref{sec:DGA}
from the point of view of the Lagrangian $(xy)$-projection.  We note
that the exposition in the current article is independent of that from
\cite{NgR} where computations of $S(K,\beta)$ were made using the
front $(xz)$-projection. Overall, the Lagrangian approach is more
efficient for producing precise bijections between augmentations of
$S(K,\beta)$ and higher dimensional representations of $K$.  In
Theorem \ref{thm:bijection} we obtain such a bijection, and the
remainder of Section \ref{sec:Sat} establishes Theorem \ref{thm:A} and
Corollary \ref{cor:A}.  In the concluding Section \ref{sec:color} we
introduce the $n$-colored ruling polynomials and then prove the
formula from Theorem \ref{thm:color}.  The connection with the colored
HOMFLY-PT polynomial, in the case $m=2$, is then established in
Theorem \ref{thm:nHOMFLY} and Corollary \ref{cor:sharp}.

\subsection{Acknowledgments} We thank John Etnyre, Lenny Ng, and Tao
Su for discussions related to this work. Thanks also to Ralph Bremigan
for pointing us to the Bruhat decomposition, and to Michael Abel for
providing us with Mathematica code that produces colored HOMFLY-PT
polynomials for torus knots. Also, we thank the referees for many
helpful suggestions. This work was supported by grants from the Simons
Foundation (\#429536, Dan Rutherford) and the NSF (DMS-1703356,
Caitlin Leverson).

\section{Background}  \label{sec:background}

We work with Legendrian links in the $1$-jet spaces, $J^1\R^1 \cong
\R^3$ and $J^1S^1 \cong S^1 \times \R^2$ coordinatized as $(x,y,z)$,
with their standard contact structures given by the contact form $dz -
y \,dx$.  We assume familiarity with some standard notions including
the {\bf front ($xz$)-projection} and the {\bf Lagrangian
  ($xy$)-projection}; in $J^1S^1$ these projections are to $S^1 \times
\R$ which we illustrate as $[0,1] \times \R$ with left and right
boundaries identified.  When $K$ is connected we denote the {\bf
  rotation number} of $K$ by $r(K)$, and we adopt the convention that
when $K$ has multiple components, $K = \sqcup_{i = 1}^c K_i$, $r(K)$
denotes the greatest common divisor of $r(K_i)$.  For $m\geq 0$ a
non-negative integer, $\Z/m$-valued {\bf Maslov potential}, $\mu$, for
a Legendrian $K$ in $J^1\R^1$ or $J^1S^1$ is an $\Z/m$-valued function
on $K$ that is locally constant except at cusp points where the value
at the upper branch of the cusp is $1$ larger than at the lower
branch.  A connected Legendrian $K$ has a $\Z/m$-valued Maslov
potential iff $m\,\vert\, 2r(K)$. When $m$ is even, we always assume
that Maslov potentials are chosen to be compatible with the
orientation of $K$ so that $\mu$ takes even (resp. odd) values on
strands that are oriented in the increasing (resp. decreasing)
$x$-direction. For further background on Legendrian knots in $\R^3$
and $J^1S^1$ see \cite{E,NgTraynor}.

\subsection{Chekanov-Eliashberg DGA}
In this article, {\bf differential graded algebras}, abbrv. DGA, are
unital; graded by $\Z/m$ for some $m \geq 0$; and have differentials
of degree $-1$ satisfying the graded signed Leibniz rule,
$\partial(xy) = (\partial x)y + (-1)^{\lvert x\rvert} x (\partial y)$,
where if $m$ is odd we assume the coefficient ring has characteristic
$2$.

Let $K \subset J^1\R$ or $J^1S^1$ be a Legendrian link equipped with
some collection of basepoints (allowing for multiple basepoints on
each component of $K$).  We will use the {\it fully non-commutative}
version of the {\bf Chekanov-Eliashberg DGA} (also called the
Legendrian contact homology DGA), $(\mathcal{A}(K), \partial)$, as
defined via the $xy$-projection of $K$. To establish conventions, we
briefly review $(\mathcal{A}(K), \partial)$ here.  More details can be
found in \cite{Ch, ENS}; see also \cite{HenryR} for an exposition
matching our sign conventions (which follow \cite{NgLSFT}), and
\cite{NgR} for the case of the fully non-commutative DGA with multiple
basepoints.

Using $\Z$-coefficients, $\mathcal{A}(K)$ is a unital associative
algebra with non-invertible generators, $a_1, \ldots, a_r$, in
bijection with Reeb chords of $K$, i.e. the double points of
$\pi_{xy}(K)$, and invertible generators $t_1^{\pm1}, \ldots,
t_\ell^{\pm1}$ in bijection with basepoints of $K$.  There are no
relations other than $t_i t_i^{-1} = t_i^{-1} t_i =1$.  A choice of
$2r(K)$-valued Maslov potential, $\mu$, for $K$ leads to a
$\Z/2r(K)$-grading, $\mathcal{A}(K)= \oplus_{k \in \Z/2r(K)}
\mathcal{A}_k$, where $\mathcal{A}_k=\{a\in\mathcal{A}(K):\lvert
a\rvert=k\mod 2r(K)\}$.  In particular, if all components of $K$ have
$r(K_i)=0$, then $\mathcal{A}(K)$ is $\Z$-graded.  Degrees of
generators (mod $2r(K)$) are given by $\lvert t_i\rvert=0$ and $\lvert
a_i\rvert = \mu(U_{i}) - \mu(L_{i}) + \mathit{ind}(a_i)-1$ where $U_i$
and $L_i$ are the upper and lower strands (with respect to the
$z$-coordinate) of $K$ that cross in $\pi_{xy}(K)$ and
$\mathit{ind}(a_i)$ is the Morse index of the critical point of
$z\vert_{U_i}(x)-z\vert_{L_i}(x)$ at $a_i$.

The differential $\partial: \mathcal{A}(K) \rightarrow \mathcal{A}(K)$
satisfies the signed Leibniz rule
and has $\partial t_i =0$.  For Reeb chords $a$ and $b_1, \ldots,
b_n$, let $\mathcal{M}(a;b_1, \ldots, b_n)$ denote the set modulo
reparametrization of boundary punctured holomorphic (or equivalently
orientation preserving, immersed) disks in $\R^2=\C$ having boundary
on $\pi_{xy}(K)$ and having a (convex) corner mapped to a positive
quadrant at $a$ and (convex) corners mapped to negative quadrants at
$b_1, \ldots, b_n$ appearing in counter-clockwise order.  Here, we use
the Reeb signs for quadrants as in Figure \ref{fig:Reeb} (left).  We
have
\[
\partial a = \sum_{n \geq 0} \sum_{b_1, \ldots, b_n} \sum_{[u] \in
  \mathcal{M}(a;b_1, \ldots, b_n)} \iota(u) \cdot w(u)
\]
where $w(u) = r_1r_2 \cdots r_N$ is the product of basepoints and
negative corners that appear along $u$ ordered as they appear
(counter-clockwise) along the boundary of $u$ starting at the positive
corner at $a$.  In this product, a $t_i$ generator corresponding to a
basepoint $*_i$ appear with exponent $+1$ (resp. $-1$) when the
orientations of $\partial u$ and $K$ agree (resp. disagree) at $*_i$.
The coefficient $\iota(u) \in \{ \pm1\}$ is given by the product
\[
\iota(u) = \iota' \iota_0 \iota_1 \cdots \iota_n.
\]
Here, $\iota'$ is $+1$ (resp. $-1$) when the orientation of the
initial arc of $\partial u$ that is just counter-clockwise from the
positive corner of $u$ agrees (resp. disagrees) with the orientation
of $K$, and $\iota_0, \ldots, \iota_n$ are the orientation signs of
the quadrants covered by the various (positive and negative) corners
of $u$.  As depicted in Figure \ref{fig:Reeb} (right), at each
$(xy)$-crossing the two quadrants to the right (resp. to the left) of
the understrand (with respect to its orientation) have $-1$
(resp. $+1$) orientation sign.

\begin{figure}[t]
  \labellist
  \small\hair 2pt \pinlabel {$-$} at 40 14 \pinlabel {$-$} at 40 68
  \pinlabel {$+$} at 68 40 \pinlabel {$+$} at 14 40
  \endlabellist
  \centering
  \includegraphics[scale=.65]{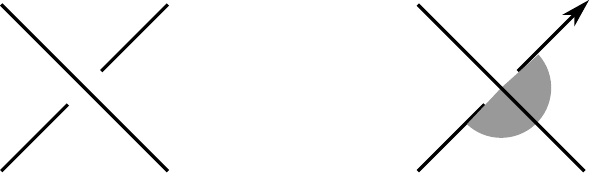}
  \caption{The Reeb signs (left) and orientation signs (right) of
    quadrants at a Reeb chord.  The shaded quadrants have orientation
    sign $-1$.}
  \label{fig:Reeb}
\end{figure}

\subsubsection{$\Z$-gradings on $\mathcal{A}(K)$ with multiple
  basepoints} \label{sec:Zgrading} As long as every component of $K$
has at least one basepoint, it is possible to define a $\Z$-grading on
$\mathcal{A}(K)$.  (The case of a single basepoint per component is
essentially as in \cite{ENS}.)  First, choose degrees $\lvert
t_j\rvert \in 2\Z$ for the generators associated to the basepoints,
$*_1, \ldots, *_\ell$ of $K$, subject only to the restriction that the
sum of the degree of all basepoints on a given component $K_i \subset
K$ is equal to $-2r(K_i)$.  Next, choose a $\Z$-valued Maslov
potential, $\mu$, for $K\setminus\{*_1, \ldots, *_\ell\}$, such that
when $*_i$ is passed in the direction of the orientation of $K_i$ the
value of $\mu$ decreases by $\lvert t_i\rvert$.  Since $\mu$ is
$\Z$-valued, following the definition above provides integer degrees,
$\lvert a_i\rvert \in \Z$, for the $a_i$.

\begin{remark}
  The requirement that $|t_i|$ is even is not strictly necessary.
  However, if some of the $t_i$ have odd degree (we will not actually
  use this case later in the article), we can no longer assume that
  the parity of $\mu$ is compatible with the orientation of $K$,
  i.e. even (resp. odd) at points where $K$ is oriented in the
  increasing (resp. decreasing) $x$-direction.  When this occurs, we
  modify the definition of $\partial$ by, for each $a_i$, multiplying
  all terms of $\partial a_i$ by an extra sign $\iota''$ that is $+1$
  (resp. $-1$) if the parity of $\mu$ is (resp. is not) compatible
  with the orientation of $K$ at the overstrand of $a_i$.
\end{remark}

\begin{proposition} \label{prop:DGAInvariance}
  \begin{enumerate}
  \item For any choice of $\mu$, $(\mathcal{A}(K),\partial)$ is a
    $\Z$-graded DGA, i.e. $\partial$ has degree $-1$, and satisfies
    $\partial^2=0$.

  \item If $(K, \mu)$ and $(K',\mu')$ are Legendrian isotopic (as
    basepointed links, preserving Maslov potentials), then
    $(\mathcal{A}(K), \partial_1)$ and $(\mathcal{A}(K'), \partial_2)$
    are stable tame isomorphic.\footnote{See \cite{ENS} for a complete
      definition of stable tame isomorphism which is a type of
      equivalence for DGAs equipped with explicit generating sets. For
      the fully non-commutative DGA, the definition of elementary
      isomorphism needs to be expanded to allow for isomorphisms that
      multiply a single $a_i$ generator on the right or left by some
      $t^{\pm1}_j$ to account for the effect of repositioning base
      points.  As a result, when DGAs are considered up to tame
      isomorphism, the degree distribution of the set of $a_i$
      generators is only well defined mod the g.c.d. of all $\lvert
      t_j\rvert$.}

  \item Let $(\mathcal{A}_1,\partial_1)$ and
    $(\mathcal{A}_2,\partial_2)$ be formed using two collections of
    basepoints, $B_1$ and $B_2$, for a common Legendrian $K$.  Suppose
    that $B_1$ and $B_2$ are identical except for a single component
    $K_i \subset K$ where $B_1$ only has one basepoint, $*$, and $B_2$
    has a sequence of basepoints, $*_1,\ldots, *_n$, all appearing in
    a small neighborhood of $*$ and ordered according to the
    orientation of $K$.  Let $t \in \mathcal{A}_1$ and $t_1, \ldots,
    t_n \in \mathcal{A}_2$ denote the generators corresponding to $*$
    and $*_1, \ldots, *_n$.  Then, $(\mathcal{A}_2,\partial_2)$ is the
    free product with amalgamation
    \[
    \mathcal{A}_2 = \mathcal{A}*_{t = t_1\ldots t_l} \Z\langle
    t_1^{\pm1}, \ldots, t_l^{\pm1} \rangle
    \]
    with $\partial_2$ induced by $\partial_1$ and the $0$ differential
    on $\Z\langle t_1^{\pm1}, \ldots, t_l^{\pm1} \rangle$.

  \end{enumerate}

\end{proposition}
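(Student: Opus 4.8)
The plan is to treat the three assertions separately, in each case reducing to standard holomorphic-disk and combinatorial arguments and then checking that the decorations from basepoints and the fully non-commutative coefficients are respected. For part (1), I would first verify that $\partial$ is homogeneous of degree $-1$. Given a disk $u \in \mathcal{M}(a; b_1, \ldots, b_k)$ contributing to $\partial a$, the degree of the monomial $w(u)$ is the sum of the degrees $|b_j| = \mu(U_j) - \mu(L_j) + \mathit{ind}(b_j) - 1$ of its negative corners together with the degrees $\pm |t_i|$ contributed by the basepoints met along $\partial u$. Since the chosen Maslov potential decreases by $|t_i|$ across each $*_i$, the total change of $\mu$ around the closed loop $\partial u$ is zero, and the usual index count for immersed polygons then forces $|w(u)| = |a| - 1$. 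For $\partial^2 = 0$ I would invoke the standard moduli-space argument: the ends of the one-dimensional spaces $\mathcal{M}(a;\ldots)$ are the two-story broken disks that constitute the monomials of $\partial^2 a$, and the signs $\iota(u)$ are arranged so that the two ends of each such component cancel. The only point beyond the treatment in \cite{ENS} is that the words $w(u)$ concatenate correctly under gluing; this is the bookkeeping carried out in \cite{NgR, NgLSFT}, which I would follow.

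For part (2), I would decompose the Legendrian isotopy into a finite sequence of elementary moves of the Lagrangian projection --- triple-point moves, birth/death of a cancelling pair of Reeb chords, and moves sliding a basepoint across a crossing or cusp --- and exhibit an explicit stable tame isomorphism for each. Triple-point moves give tame automorphisms as in Chekanov's original computation \cite{Ch}, a birth/death move costs one algebraic stabilization, and sliding a basepoint past a crossing is accounted for precisely by the enlarged elementary isomorphism of the footnote, which multiplies a single $a_i$ on the left or right by some $t_j^{\pm 1}$. In each case one checks that the map is graded and intertwines the differentials, which is routine once the signs and basepoint exponents are tracked.

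Part (3) is the most self-contained. Since $*_1, \ldots, *_n$ all lie on a short arc containing no crossings, the Reeb chords, and indeed the entire moduli spaces, are unchanged; only the boundary words are affected. Each passage of $\partial u$ through the old basepoint $*$ is replaced by a passage through $*_1, \ldots, *_n$ in order, so the factor $t^{\pm 1}$ in $w(u)$ becomes $(t_1 \cdots t_n)^{\pm 1}$, and hence $\partial_2 a_i$ is $\partial_1 a_i$ with $t$ substituted by $t_1 \cdots t_n$. I would then observe that $\mathcal{A}_2$ is freely generated by the unchanged $a_i$ together with $t_1^{\pm 1}, \ldots, t_n^{\pm 1}$, while $\mathcal{A}_1 *_{t = t_1 \cdots t_n} \Z\langle t_1^{\pm 1}, \ldots, t_n^{\pm 1}\rangle$ is generated by these same elements after eliminating $t$ via the amalgamation relation; the identity on generators is then an algebra isomorphism, and the substitution above shows that it intertwines the differentials. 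In all three parts the conceptual content is light, and the difficulty I expect to be the main obstacle lies in keeping the orientation signs $\iota', \iota_0, \ldots, \iota_n$ and the prescribed positions of the coefficients $t_i^{\pm 1}$ within each monomial consistent under gluing and under each elementary move.
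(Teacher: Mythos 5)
Your proposal is correct and follows essentially the same route as the paper: item (3) by direct comparison of boundary words (each passage of $*$ replaced by the ordered sequence $*_1,\ldots,*_n$, so $t\mapsto t_1\cdots t_n$), and items (1)--(2) by reducing to the standard single-basepoint results of \cite{Ch,ENS,NgLSFT} together with the new ingredient of basepoint repositioning, handled by the tame isomorphisms $a_l\mapsto t_j^{-1}a_l$ or $a_l\mapsto a_l t_j$. The only point the paper flags that you gloss over as ``routine'' is the role of the extra sign $\iota''$ when some $|t_j|$ is odd in verifying that the basepoint-slide map is a chain map, but this is a minor bookkeeping remark rather than a gap.
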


\begin{proof} This is closely related to Theorem 2.20 and 2.21 of
  \cite{NgR} (which considers the $\Z/2$-coefficient case with
  $\Z/2r(K)$-grading).

  Item (3) is immediately verified from definitions.  [The algebra
  $\mathcal{A}_2$ is just $\mathcal{A}_1$ with $t^{\pm1}$ removed from
  the generating set and replaced with $t_1^{\pm1}, \ldots,
  t_n^{\pm1}$.  The differentials are related as claimed since every
  time $*$ appears along the boundary of a disk used in computing
  $\partial_1$, the entire sequence $*_1, \ldots, *_n$ appears in its
  place when computing $\partial_2$.]

  For Items (1) and (2), the result is standard in the case where
  every component has a single basepoint, cf. \cite{Ch, ENS, NgLSFT}.
  Combining this result with item (3), it suffices to consider the
  case where $K = K'$ except for the basepoint sets, $B_1$ and $B_2$,
  which are related by isotopy in $K$.  Moreover, we can assume $B_2$
  is obtained from $B_1$ by pushing a basepoint with generator $t_j$
  through a crossing $a_l$ following the orientation of $K$.  When
  $t_j$ is pushed across the overstrand of $a_l$, one checks that the
  graded algebra isomorphism $\phi$ that fixes all generators other
  than $a_l$ and maps $a_l \mapsto t_j^{-1} a_l$ is a chain
  map. (Here, if $t_j$ has odd degree, the presence of the $\iota''$
  term that modifies the definition of $\partial a_l$ becomes
  important when verifying that $\partial_2 \circ \phi (a_l) =
  (-1)^{\lvert t_j\rvert}t_j^{-1} \partial_2 (a_l)$ agrees with $\phi
  \circ \partial_1(a_l) = \partial_1(a_l)$.)  When $t_j$ crosses
  past $a_l$ along the understrand, $\phi$ is modified to have
  $a_l\mapsto a_l t_j$.
\end{proof}

\begin{remark}
  \begin{enumerate}
  \item Categorically, (3) is the statement that
    $(\mathcal{A}_2, \partial_2)$ is the pushout (in the category of
    DGAs over $\Z$) of $(\mathcal{A}_1, \partial_1)$ and $\Z\langle
    t_1^{\pm1}, \ldots, t_l^{\pm1} \rangle$ with respect to the
    inclusion of the subalgebra $\Z\langle t^{\pm1}\rangle
    \hookrightarrow \mathcal{A}_1$ and the homomorphism $\Z\langle
    t^{\pm1} \rangle \rightarrow \Z\langle t_1^{\pm1}, \ldots,
    t_l^{\pm1} \rangle$, $t\mapsto t_1\cdots t_l$.

  \item When $K$ has only one component with a single basepoint,
    $(\mathcal{A}(K), \partial)$ is independent of the choice of
    $\mu$.  When $K$ has one component, but multiple basepoints,
    $(\mathcal{A}(K), \partial)$, is uniquely determined by the choice
    of degrees for the $t_i$.  For multi-component links, different
    choices of $\mu$ can lead to different gradings on
    $(\mathcal{A}(K),\partial)$.

  \end{enumerate}

\end{remark}

\subsection{Ruling polynomials}

We also briefly recall the ruling polynomial invariants of a
Legendrian link $K$ in $J^1\R$ or $J^1S^1$ equipped with a
$\Z/m$-valued Maslov potential, $\mu$, where $m\,\vert\,2r(K)$.
The {\bf $m$-graded ruling polynomial} of $K$ is
\[R^m_K(z) = \sum_{\rho} z^{j(\rho)},\] where the sum is over all {\bf
  $m$-graded normal rulings} of $K$ and $j(\rho) = \#(\mbox{switches})
- \#(\mbox{right cusps})$.  Here, an $m$-graded normal ruling of $K$
is a combinatorial structure associated to the front projection of
$K$.  Normal rulings were introduced independently by Fuchs and
Chekanov-Pushkar, cf. \cite{F} and \cite{ChP}; a detailed definition
of normal rulings for links in $J^1\R$ and $J^1S^1$ may be found, for
instance, in both \cite{R1} and \cite{NgR}.  The $m$-graded ruling
polynomial satisfies the following skein relations:
\begin{enumerate}
\item[(i)]
  \[
  \fig{images/SR1} \quad \quad -\fig{images/SR2}= z\left(
    \delta_1\fig{images/SR3} - \delta_2 \fig{images/SR4}\right)
  \]
\item[(ii)]
  \[
  \fig{images/ZigZag1} = \fig{images/ZigZag2} = 0
  \]
\item[(iii)]
  \[
  \fig{images/Unknot} \sqcup K = z^{-1} K
  \]
\end{enumerate}
where in (i), $\delta_1$ (resp. $\delta_2$) is $1$ if the the two
strands that intersect in the first (resp. second) term on the left
hand side have equal Maslov potentials, and is $0$ otherwise; see
\cite{R2,R1} for the cases $m=1,2$.  For Legendrian links in $J^1\R$,
the ruling polynomials are uniquely characterized by the skein
relations (i)-(iii), together with the normalization $R^m_U(z) =
z^{-1}$ on the standard ($\mathit{tb}=-1$) Legendrian unknot
\[
U = \fig{images/Unknot}.
\]
The proof from Lemma 6.10 of \cite{R1} (which concerns the $2$-graded
case), applies with a trivial modification to show that using the
relations (i)-(iii) any Legendrian link in $J^1S^1$ can be written as
a $\Z[z^{\pm1}]$-linear combination of products of the basic fronts,
$A_m$, as pictured in Figure \ref{fig:Basic}.  Here, the product of
two Legendrians in $J^1S^1$ is defined by stacking front diagrams
vertically, and we allow the basic front factors to be equipped with
$\Z/m$-valued Maslov potentials of arbitrary value.

\begin{remark} \begin{enumerate}
  \item The skein relation (iii) follows from (i) and (ii) when $K
    \neq \emptyset$.
  \item When $K$ has only one component, $R^m_K(z)$ is independent of
    the choice of $\Z/m$-valued Maslov potential.  This is not true
    for multi-component links.
  \end{enumerate}
\end{remark}

\begin{figure}[t]
  \labellist
  \pinlabel {$A_3\quad=\,\,$} [r] at -4 46
  \endlabellist
  \centering
  \includegraphics[scale=.65]{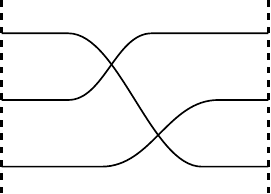}
  \caption{The basic front, $A_m \subset J^1S^1$, for $m \geq 1$ winds
    $m$ times around $S^1$ with $m-1$ crossings and no cusps.}
  \label{fig:Basic}
\end{figure}

\section{Representation numbers} \label{sec:RepNumber}

In this section, we define various Legendrian isotopy invariants from
counts of representations of the Chekanov-Eliashberg DGA.

\subsection{Representations of a DGA}
Let $(\mathcal{A}, \partial)$ be a $\Z$-graded DGA, $\mathcal{A}=
\oplus_{k \in \Z} \mathcal{A}_k$, and let $(\mathcal{B}, \delta)$ be a
(unital) DGA graded by $\Z/M$, $\mathcal{B} = \oplus_{k \in
  \Z/M}\mathcal{B}_k$.  With a divisor $m\,\big\vert\,M$ fixed, for
any $k \in \Z$, we write
\[
\mathcal{B}^m_k = \bigoplus_{\begin{array}{c} l \in \Z/M \\ l = k \,
    \mbox{mod} \, m \end{array}} \mathcal{B}_l.
\]

\begin{definition} An {\bf $m$-graded representation} from
  $(\mathcal{A},\partial)$ to $(\mathcal{B},\delta)$ is a (unital)
  algebra homomorphism $f:(\mathcal{A}(K),\partial) \rightarrow
  (\mathcal{B},\delta)$ that satisfies
  \[
  f \circ \partial = \delta \circ f \quad \quad \mbox{and} \quad
  \quad f(\mathcal{A}_k) \subset \mathcal{B}^m_k, \, \forall k \in \Z.
  \]
\end{definition}

\subsection{Defining $m$-graded representation
  numbers} \label{sec:defnumbers} 

Given a Legendrian link $K = \sqcup_{i=1}^c K_i \subset J^1\R$, fix
$m\geq 0$ with $m \, \vert\, 2r(K)$.  {\it If $m$ is even, we impose
  the requirement that $r(K)=0$} (for all components of $K$).  Let
$\mu$ be a Maslov potential for $K$ that is $\Z$-valued if $m$ is even
and $\Z/2m$-valued if $m$ is odd.  (For $\Z/2m$-valued Maslov
potentials, we maintain the requirement that $\mu$ is even (resp. odd)
valued when $K$ is oriented in the positive (resp. negative)
$x$-direction.)  We will define a class of Legendrian isotopy
invariant $m$-graded representation numbers for $(K, \mu)$ that, in
the case $c=1$, are independent of $\mu$.

\begin{remark}  When $m$ is even, the assumption that $r(K)=0$ is required for the proof of Legendrian invariance of $m$-graded representation numbers in Proposition 3.9.  For $1$-component Legendrian knots, in some cases of interest,  no generality is lost by this assumption.  Indeed, it will be shown in Corollary \ref{cor:rotation} that the existence of $2$-graded representations of certain types  implies that $r(K)=0$.
\end{remark}

We begin by making the following additional choices:
\begin{itemize}
\item[(\textbf{A1})] Equip $K$ with a collection of basepoints so that
  every component of $K$ has at least $1$ basepoint.
\item[(\textbf{A2})] As in Section \ref{sec:Zgrading}, assign degrees,
  $|t_i|$, to generators associated to basepoints with the restriction
  that:
  \begin{itemize}
  \item[(a)] If $m$ is even, then all $t_i$ have $\lvert t_i\rvert=0$.
  \item[(b)] If $m$ is odd, then $2m\,\big\vert\, \lvert t_i\rvert$
    for all basepoints.
  \end{itemize}
  Then, if $m$ is odd, fix an additional $\Z$-valued Maslov potential,
  $\widetilde{\mu}$, (discontinuous at basepoints with $|t_i| \geq 0$)
  that agrees with $\mu$ when reduced mod $2m$.
\item[(\textbf{A3})] For each component $K_i \subset K$, fix an
  initial basepoint, $t_{e_i}$, among the basepoints on $K_i$.
\end{itemize}
Let $(\mathcal{A}(K), \partial)$ be the resulting $\Z$-graded version
of the Chekanov-Eliashberg DGA as in Section \ref{sec:Zgrading}.

Next, given a $\Z/M$-graded DGA $(\mathcal{B}, \delta)$ with
$m\,|\,M$, we choose a subset $T_i \subset (\mathcal{B}^m_0)^{*}$
(consisting of invertible elements in $\mathcal{B}^m_0$) for each
component $K_i \subset K$, and we write $\mathbf{T} =
\{T_i\}_{i=1}^c$.  Denote by
\[
\overline{\mbox{Rep}}_m(K, (\mathcal{B},\delta), \mathbf{T})
\]
the set of all $m$-graded representations from
$(\mathcal{A}(K),\partial)$ to $(\mathcal{B},\delta)$ with the
property that
\[
f(s_i) \in T_i, \quad \mbox{for all} \, 1 \leq i \leq c,
\]
where $s_i$ is the product of all invertible generators from
basepoints of $K_i$ ordered with $t_{e_i}$ first and so that the
factors appear in accordance with the orientation of $K_i$.

\begin{definition} The {\bf shifted Euler characteristic of $K$
    centered at $k \in \Z$} is
  \[
  \chi^k = \sum_{l \geq 0} (-1)^lr_{k +l} + \sum_{l <0} (-1)^{l+1}
  r_{k+l},
  \]
  where $r_l$ is the number of Reeb chords of $\Lambda$ of degree $l$.
  Alternatively, if we define
  \[
  \eta^k: \Z \rightarrow \Z, \quad \eta^k(l) =
  \left\{\begin{array}{cr} (-1)^{l-k}, & l\geq k, \\ (-1)^{l-k+1}, &
      l<k, \end{array}\right.
  \]
  then $\chi^k = \sum_{i} \eta^k(\lvert a_i\rvert)$ where the sum is
  over all Reeb chords of $K$.
\end{definition}

Consider the following conditions on $(\mathcal{B},\delta)$:
\begin{itemize}
\item[(\textbf{B1})] $\mathcal{B}$ has a finite number of elements.
\item[(\textbf{B2})] For all $k \in \Z$, $\lvert\mathcal{B}^m_k\rvert
  = \lvert\mathcal{B}^m_{-k}\rvert$. (Here, and throughout the article, $|X|$ indicates the {\it cardinality} of a set $X$.)
\end{itemize}
\begin{definition}\label{def:repNum}
  Suppose that $(\mathcal{B}, \delta)$ satisfies (B1)-(B2).  We then
  define {\bf $m$-graded representation numbers} of $K$ as
  \begin{equation} \label{eq:numbers} \mbox{Rep}_m(K,
    (\mathcal{B},\delta), \mathbf{T}) =\left(\lim_{N \rightarrow
        +\infty} \prod_{k \in \Z, \lvert k\rvert\leq N}
      \lvert\mathcal{B}^m_k\rvert^{-(\chi^k/2)} \right)\cdot
    \lvert(\mathcal{B}^m_0)^* \cap \ker \delta\rvert^{-\ell} \cdot
    \lvert\overline{\mbox{Rep}}_m(K, (\mathcal{B},\delta),
    \mathbf{T})\rvert,
  \end{equation}
  where $\ell$ is the total number of basepoints on $K$.
\end{definition}

We will show in Proposition~\ref{prop:Invariance} that the
$\mbox{Rep}_m(K, (\mathcal{B},\delta), \mathbf{T})$ are Legendrian
isotopy invariants of $(K,\mu)$ and are independent of the other
choices involved in their definition.  In the proof, we show that stabilizing a DGA
$\alg$ in degree $k$ changes $|\overline{\mbox{Rep}}_m(K,
(\mathcal{B},\delta), \mathbf{T})|$ by a factor of
$|\mathcal{B}^m_k|$, thus leaving \eqref{eq:numbers} unchanged. The
following lemma shows the limit in \eqref{eq:numbers} exists.
  
\begin{lemma} \label{lem:bN} The sequence $(b_N)_{N=1}^\infty$ with
  $\displaystyle b_N = \prod_{k \in \Z, \lvert k\rvert\leq N}
  \lvert\mathcal{B}^m_k\rvert^{-(\chi^k/2)}$ is eventually constant.
  In particular, the limit exists in (\ref{eq:numbers}).
\end{lemma}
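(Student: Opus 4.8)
The plan is to show that the ratio of consecutive terms $b_N/b_{N-1}$ equals $1$ once $N$ exceeds the largest absolute degree among the finitely many Reeb chords of $K$. Since the product defining $b_N$ ranges over $|k|\leq N$, passing from $b_{N-1}$ to $b_N$ introduces exactly the two new factors at $k=N$ and $k=-N$, so that
\[
\frac{b_N}{b_{N-1}} = \lvert\mathcal{B}^m_N\rvert^{-\chi^N/2}\,\lvert\mathcal{B}^m_{-N}\rvert^{-\chi^{-N}/2}.
\]
Condition (B2) gives $\lvert\mathcal{B}^m_N\rvert=\lvert\mathcal{B}^m_{-N}\rvert$, so this collapses to $\lvert\mathcal{B}^m_N\rvert^{-(\chi^N+\chi^{-N})/2}$, and it therefore suffices to prove that $\chi^N+\chi^{-N}=0$ for all large $N$.

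First I would record the asymptotic behavior of the shifted Euler characteristic away from the support of the $r_l$. Let $D$ be an integer exceeding $|l|$ for every Reeb chord degree $l$, so that $r_l=0$ whenever $|l|>D$. For $N>D$ every term $r_{N+l}$ with $l\geq 0$ in $\chi^N$ vanishes, while the $l<0$ terms range over all Reeb chords; reindexing by $j=N+l$ and using that $N$ and $-N$ have the same parity yields $\chi^N=(-1)^{N+1}\chi$, where $\chi=\sum_l(-1)^l r_l$. Symmetrically, for $N>D$ the $l<0$ terms of $\chi^{-N}$ vanish and the $l\geq 0$ terms sweep out all Reeb chords, giving $\chi^{-N}=(-1)^N\chi$. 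Consequently $\chi^N+\chi^{-N}=\bigl((-1)^{N+1}+(-1)^N\bigr)\chi=0$, which is exactly what the previous paragraph requires. Combining the two steps, $b_N=b_{N-1}$ for all $N>D$, so the sequence is eventually constant and the limit in (\ref{eq:numbers}) exists.

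The hard part will be the index bookkeeping in the definition of $\chi^k$: one must check that the $\eta^k$-weights recombine so that the asymptotic values $\chi^{\pm N}$ are genuinely $\pm\chi$ with \emph{opposite} signs, rather than merely stabilizing, since it is the cancellation $\chi^N=-\chi^{-N}$ paired with (B2) at the symmetric index $k=\pm N$—and not any periodicity of $\lvert\mathcal{B}^m_k\rvert$—that drives the argument. I expect this sign computation to be the only subtle point; it also has the pleasant feature of working uniformly in $m\geq 0$, since finiteness of the Reeb chord set and (B2) are all that is used. Everything else is a direct consequence of there being only finitely many generators $a_i$.
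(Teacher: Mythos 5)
Your proof is correct and follows essentially the same route as the paper's: both arguments reduce to showing $\chi^{-N}=-\chi^{N}$ once $N$ exceeds every Reeb chord degree (the paper via the identity $\eta^{k}(l)=-\eta^{-k}(l)$ for $\lvert l\rvert<k$, you via direct reindexing of the defining sums), and both then invoke (B2) to cancel the paired factors at $k=\pm N$. The sign bookkeeping you flag as the subtle point checks out.
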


\begin{proof}
  Note that when $\lvert l\rvert <k$, we have $\eta^k(l) = (-1)^{l-k
    +1}=-(-1)^{l+k}=-\eta^{-k}(l)$.  It follows that $\chi^{-k} =
  -\chi^{k}$ holds once $k > \lvert\deg(a_i)\lvert$ for all $i$.
  Using (B2), we see that once $k$ is large enough all
  $\lvert\mathcal{B}^m_{k}\rvert^{-\chi^k/2}$ and
  $\lvert\mathcal{B}^m_{-k}\rvert^{-\chi^{-k}/2}$ terms cancel in the
  product.  Thus, $b_N$ is eventually constant.
\end{proof}

\begin{remark}
  \begin{enumerate}
  \item The assumption (B2) can be removed if $m=0$, as the finiteness of $\mathcal{B}$ implies $\lvert
    \mathcal{B}^0_k \rvert = \lvert \mathcal{B}_k \rvert =1$ for $|k|
    \gg 0$. 
  \item If $m>0$ is odd, then (B2) can be removed if the limit in
    (\ref{eq:numbers}) is replaced with
    \[
    \lim_{N \rightarrow \infty} \left(\prod_{k \in \Z, \lvert
        k\rvert\leq 2mN}
      \lvert\mathcal{B}^m_k\rvert^{-(\chi^k/2)}\right).
    \]
  \end{enumerate}
\end{remark}

\begin{definition}\label{def:reducedRep}
  It is also convenient to introduce {\bf reduced representation
    numbers} defined by
  \begin{equation} \label{eq:reduced} \rrep_m(K, (\mathcal{B},\delta),
    \mathbf{T}) = |\mathcal{B}^m_0|^{-1/2} |(\mathcal{B}^m_0)^* \cap
    \ker \delta| \cdot \mbox{Rep}_m(K, (\mathcal{B},\delta),
    \mathbf{T}).
  \end{equation}
\end{definition}

The following example shows that, with this normalization of the
representation numbers and some assumptions about
$(\mathcal{B},\delta)$, the standard Legendrian unknot has reduced
representation number $1$.

\begin{example}
  Let $(\mathcal{B}, \delta)$ be a DGA satisfying (B1) and (B2).
  Assume further that the the subset $T_1\subset (\mathcal{B}_0^m)^*$
  contains all elements of the form $-1+\delta x$ with $x \in
  \mathcal{B}_m^1$; in particular, we require that all such elements
  are invertible.  Under these assumptions, we compute the
  representation number for the standard Legendrian unknot, $U$.

  The DGA of $U$ has a single Reeb chord generator, $b$, with $|b|
  =1$, as well as an invertible generator $t^{\pm1}$ for the base
  point on $U$.  The differential is
  \[
  \partial b = t +1, \quad \partial t = 0.
  \]
  An $m$-graded representation $f:(\mathcal{A}(U), \partial)
  \rightarrow (\mathcal{B}, \delta)$ is uniquely determined by $f(b)
  \in \mathcal{B}_1^m$ since the representation equation
  $f\circ \partial (b) = \delta \circ f (b)$ implies
  \begin{equation} \label{eq:ft} f(t) = -1 + \delta \circ f(b).
  \end{equation}
  Moreover, $f(t)$ as defined by (\ref{eq:ft}) is invertible and
  belongs to $T_1$, so $f(b) \in \mathcal{B}^m_1$ may be chosen
  arbitrarily.  Thus, $\overline{\mbox{Rep}}_m(U,
  (\mathcal{B},\delta), \mathbf{T})$ is in bijection with $B^m_1$.
  Using (B2) and that $U$ has $\chi^k = - \chi^{-k}$ for $k\geq 2$, we
  compute
  \begin{align*}
    \mbox{Rep}_m(U, (\mathcal{B},\delta), \mathbf{T}) &=\big(|\mathcal{B}^m_{-1}|^{-1/2}|\mathcal{B}^m_{0}|^{1/2}|\mathcal{B}^m_{1}|^{-1/2} \big)\cdot |(\mathcal{B}^m_0)^* \cap \ker \delta|^{-1} \cdot |\mathcal{B}^m_{1}| \\
    & = |\mathcal{B}^m_{0}|^{1/2}\cdot |(\mathcal{B}^m_0)^* \cap \ker
    \delta|^{-1}.
  \end{align*}
  In addition, the reduced representation number satisfies $\rrep_m(U,
  (\mathcal{B},\delta), \mathbf{T})=1$.
\end{example}

\subsection{Legendrian invariance}
In establishing invariance when $m$ is odd, it will be useful to have
that the limit used in (\ref{eq:numbers}) depends only on the mod $2m$
degree of Reeb chords.  (This is false when $m$ is even.)

To give a precise statement, when $m$ is odd let
\begin{equation} \label{eq:chi1} \sigma_m= \sum_{l =0}^{m-1}(-1)^l
  \left\lvert\{a_i\,\vert\, \deg(a_i) =l \,\mbox{mod}\,
    m\}\right\rvert
\end{equation}
and for $1 \leq r<m$ set
\[\nu^r_{m} = \sum_{l=0}^{m-1}(-1)^l \left\lvert\{a_i\,\vert\, \deg(a_i)
  =r+l \,\mbox{mod}\, 2m\}\right\rvert.\]

\begin{lemma} \label{lem:odd} When $m$ is odd and $(b_N)$ is as in
  Lemma \ref{lem:bN},
  \[
  \lim_{N\rightarrow +\infty} b_N =
  \lvert\mathcal{B}_0^m\rvert^{-\sigma_m/2}\cdot
  \prod_{r=1}^{m-1}\lvert\mathcal{B}_r^m\rvert^{-\nu^r_m}
  \]
\end{lemma}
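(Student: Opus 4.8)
The plan is to compute the eventual (constant) value of $b_N$ by grouping the factors $|\mathcal{B}^m_k|$ according to the residue $k \bmod m$. The structural input is that, by the definition $\mathcal{B}^m_k = \bigoplus_{l \equiv k \bmod m}\mathcal{B}_l$, the quantity $|\mathcal{B}^m_k|$ depends only on $k \bmod m$; so for each residue $r \in \{0,1,\dots,m-1\}$ I would collect all exponents $-\chi^k/2$ with $k \equiv r \bmod m$ into a single power of $|\mathcal{B}^m_r|$. Writing $\chi^k = \sum_i \eta^k(|a_i|)$, the task reduces to evaluating, for each Reeb chord of integer degree $l = |a_i|$ (using the $\Z$-valued potential $\widetilde{\mu}$), the partial sums $\Theta_r(l,N) = \sum_{|k|\le N,\ k\equiv r \bmod m}\eta^k(l)$ and passing to the limit.

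Here is the step I expect to be the real obstacle. Splitting $\eta^k(l) = (-1)^{l+k}$ for $k \le l$ and $-(-1)^{l+k}$ for $k > l$, one finds $\Theta_r(l,N) = (-1)^l\bigl[A_r(N) - 2 B_r(l,N)\bigr]$, where $A_r(N) = \sum_{|k|\le N,\ k\equiv r}(-1)^k$ and $B_r(l,N) = \sum_{l < k \le N,\ k\equiv r}(-1)^k$. Because $m$ is odd, within a fixed residue class mod $m$ the signs $(-1)^k$ alternate, so $A_r(N)$ oscillates with $N$ and the \emph{individual} residue-class exponents do not converge; only the full product $b_N$ stabilizes, by Lemma \ref{lem:bN}. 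The way around this is to evaluate the limit along the cofinal subsequence $N = 2mT$: a direct count shows that on this subsequence $A_0(2mT) = 1$ while $A_r(2mT) = 0$ for $1 \le r < m$, so the oscillating boundary term contributes only to the $r=0$ factor. Since $b_N$ is eventually constant, evaluating along $N = 2mT$ indeed computes $\lim_N b_N$ (and, notably, this subsequence computation uses only the periodicity of $|\mathcal{B}^m_k|$, not (B2)).

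With the $A_r$-terms controlled, the remaining work is the closed-form evaluation of $B_r(l,2mT)$. Writing $l - r = qm + s$ with $0 \le s < m$, the alternating partial sums give $1 - 2B_0(l) = (-1)^q$ and, for $r\neq 0$, $(-1)^l B_r(l) = -(-1)^s$ when $q$ is even and $0$ when $q$ is odd; these depend only on $(q \bmod 2,\ s)$, i.e.\ only on $l \bmod 2m$, which is exactly why the limit depends only on the degrees of the Reeb chords mod $2m$. Assembling the contributions, the exponent of $|\mathcal{B}^m_0|$ becomes $-\tfrac12\sum_i (-1)^{|a_i|}(-1)^{q_i} = -\tfrac12\sum_i (-1)^{\,|a_i|\bmod m} = -\sigma_m/2$, matching (\ref{eq:chi1}); and for $1 \le r < m$ the exponent of $|\mathcal{B}^m_r|$ becomes $\sum_i (-1)^{|a_i|}B_r(|a_i|,2mT) = -\nu^r_m$, matching the displayed definition of $\nu^r_m$ once one checks that $l - r \equiv s \bmod 2m$ when $q$ is even and $l - r \equiv m+s \bmod 2m$ when $q$ is odd. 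Multiplying these factors over all $r$ yields the claimed formula.
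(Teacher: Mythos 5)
Your proof is correct and follows essentially the same route as the paper: group the factors of $b_{2mT}$ by the residue of $k$ mod $m$, use that $\lvert\mathcal{B}^m_k\rvert$ depends only on $k \bmod m$, and compute the limit of the summed exponents $\sum_{k\equiv r}\chi^k$ along the subsequence of multiples of $2m$, identifying them with $\sigma_m$ and $2\nu^r_m$. The only difference is in execution: the paper verifies the identification by checking equality of homomorphisms on the basis $\{e_0\}\cup\{e_{i-1}+e_i\}$ of $\Z^{-\infty,\infty}$, whereas you evaluate the alternating partial sums $A_r$ and $B_r$ directly per Reeb chord; both computations check out.
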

\begin{proof}
  Write $b_{2mN}= \beta_N^0\beta_N^1\cdots \beta_N^{m-1}$ where
  \[
  \beta^r_N = \prod_{\begin{array}{c}\lvert k\rvert\leq 2mN, \\ k =r
      \mod m \end{array}} \lvert B^m_k\rvert^{-\chi^k/2}
  \]
  Then, by definition of $B_k^m$, $\beta^r_N = \lvert
  B^m_r\rvert^{-c^r_N/2}$ where
  \[
  c^r_N = \sum_{\begin{array}{c}\lvert k\rvert\leq 2mN, \\ k =r \mod
      m \end{array}} \chi^k,
  \]
  and to establish the lemma we show that $\lim_{N \rightarrow
    +\infty} c^0_N = \sigma_m$ and $\lim_{N \rightarrow +\infty} c^r_N
  = 2\nu^r_m$ for $1 \leq r \leq m-1$.

  Let $\Z^{-\infty, \infty}$ denote the (free) $\Z$-module whose
  elements are bi-infinite sequences of integers, $(s_n)_{n \in \Z}$,
  with only finitely many non-zero terms, and let $e_i =
  (\delta_{n,i})_{n \in \Z}$ denote its standard basis.  The
  definition of $\chi^k$ extends to provide a homomorphism
  \[X^k: \Z^{-\infty,\infty} \rightarrow \Z, \quad X^k\left( (s_n)
  \right) = \sum_{l \geq 0} (-1)^{l} s_{k+l} + \sum_{l<0} (-1)^{l+1}
  s_{k+l}.\]
  Notice that
  \[X^k(e_{i-1} + e_{i}) = \left\{ \begin{array}{cr} 2, & i=k \\ 0, &
      i\neq k. \end{array} \right.\]
  Moreover, the sequence $C^r_N = \sum_{|k| \leq 2mN, \, k=r \mbox{
      mod $m$}} X^k$ is eventually constant when applied to any $(s_n)
  \in \Z^{-\infty, \infty}$.  [This is because once $k$ is far enough
  outside an interval containing the support of $(s_n)$, $X^k((s_n)) =
  -X^{k+m}((s_n))$.]  Therefore, the point-wise limit $C^r = \lim_{N
    \rightarrow \infty} C^r_N$ exists and is a homomorphism
  $\Z^{-\infty, \infty} \rightarrow \Z$.  Note that
  \begin{equation} \label{eq:Linear1} \chi^k = X^k((r_n)) \quad
    \mbox{and} \quad \lim_{N\rightarrow \infty} c^r_N = C^r((r_n)),
  \end{equation}
  where $r_n$ is the number of degree $n$ Reeb chords of $K$.
  Similarly, there are homomorphisms
  \begin{align*}
    S_m:\Z^{-\infty, \infty} \rightarrow \Z,   \quad & S_m((s_n)) = \sum_{l =0}^{m-1}(-1)^l \left(\sum_{i = l \, \mbox{mod} \,m} s_i \right) \\
    T_m^r: \Z^{-\infty, \infty} \rightarrow \Z, \quad & T_m^r((s_n)) =
    \sum_{l=0}^{m-1}(-1)^l \left(\sum_{i =r+l \,\mbox{mod}\, 2m} s_i
    \right)
  \end{align*}
  such that
  \begin{equation} \label{eq:Linear2} \sigma_m = S_m ((r_n)) \quad
    \mbox{and} \quad \nu_m^r = T_m^r((r_n)).
  \end{equation}

  In view of \eqref{eq:Linear1} and \eqref{eq:Linear2}, the proof is
  completed by verifying that $S_m = C^0$ and $2T^r_m = C^r$, $1 \leq
  r \leq m-1$. This is done by easily computing values on the basis
  $\{e_0\} \cup \{e_{i-1} + e_{i} \,|\, i \in \Z \}$ as
  \begin{align*}
    C^0(e_0) = S_m(e_0) = 1, \quad & C^0(e_{i-1}+e_i) = S_m(e_{i-1}+e_i) =\left\{ \begin{array}{cr} 2, & i=0 \, \mbox{mod}\, m \\ 0, & \mbox{else}, \end{array} \right. \\
    C^r(e_0) = 2T^r_m(e_0) = 0, \quad & C^r(e_{i-1}+e_i) = 2
    T^r_m(e_{i-1}+e_i) =\left\{ \begin{array}{cr} 2, & i=r \,
        \mbox{mod}\, m \\ 0, & \mbox{else}. \end{array} \right.
  \end{align*}
\end{proof}

Recall that $K$ is a Legendrian link with $m\,|2r(K)$, $m\geq 0$, and
$r(K)=0$ if $m$ is even; the Maslov potential $\mu$ for $K$ is
$\Z$-valued (resp. $\Z/2m$-valued) if $m$ is even (resp. odd).

\begin{proposition} \label{prop:Invariance} For any
  $(\mathcal{B},\delta)$ and $\mathbf{T}$ as above, the representation
  number $\mbox{Rep}_m(K, (\mathcal{B},\delta), \mathbf{T})$ is a
  Legendrian isotopy invariant of $(K, \mu)$.  If $K$ has only one
  component, then $\mbox{Rep}_m(K, (\mathcal{B},\delta), \mathbf{T})$
  is independent of $\mu$.
\end{proposition}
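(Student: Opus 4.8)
The plan is to deduce the invariance from Proposition~\ref{prop:DGAInvariance}, which reduces any Legendrian isotopy (as basepointed links preserving Maslov potentials) to a finite sequence of elementary operations on the DGA: stable tame isomorphism, from items (1)--(2), and the basepoint addition/deletion of item (3). Since $\mbox{Rep}_m(K,(\mathcal{B},\delta),\mathbf{T})$ is assembled from three factors --- the normalization $\lim_N b_N$ (determined by the multiset of Reeb-chord degrees), the power $|(\mathcal{B}^m_0)^*\cap\ker\delta|^{-\ell}$ (determined by the number $\ell$ of basepoints), and the raw count $|\overline{\mbox{Rep}}_m|$ --- I would verify that each operation leaves the full product unchanged by balancing the changes in these three factors against one another. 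The existence of the limit is guaranteed throughout by Lemma~\ref{lem:bN}.

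First I would treat tame isomorphism. A graded DGA isomorphism $\phi$ induces a bijection $f\mapsto f\circ\phi$ on representation sets that preserves the chain-map and grading constraints and preserves each condition $f(s_i)\in T_i$, since the elementary isomorphisms fix the $t_j$ and hence $\phi(s_i)=s_i$; it also fixes $\ell$ and the multiset of Reeb-chord degrees, hence $\lim_N b_N$. The only delicacy arises when repositioning a basepoint pushes a generator $t_j$ across a crossing $a_l$ (the move $a_l\mapsto t_j^{\pm1}a_l$ from the proof of Proposition~\ref{prop:DGAInvariance}), which shifts $|a_l|$ by $\pm|t_j|$. When $m$ is even this shift is $0$; when $m$ is odd it is a multiple of $2m$, and here I would invoke Lemma~\ref{lem:odd} to conclude that $\lim_N b_N$, depending only on the mod-$2m$ degrees, is unaffected.

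The heart of the argument is stabilization, where I expect the main obstacle to lie. After a tame isomorphism it suffices to treat the basic stabilization adjoining generators $e,f$ with $\partial e=f$, $\partial f=0$, and $|e|=|f|+1=j+1$, leaving old differentials untouched. A representation of the stabilized algebra is then exactly a representation of the old algebra together with a free choice of $f(e)\in\mathcal{B}^m_{j+1}$ (with $f(f)=\delta f(e)$ forced and $\delta f(f)=0$ automatic), so $|\overline{\mbox{Rep}}_m|$ is multiplied by $|\mathcal{B}^m_{j+1}|$. On the normalization side, adding chords of degrees $j$ and $j+1$ changes each $\chi^k$ by $\eta^k(j)+\eta^k(j+1)$, which a short computation shows equals $2$ when $k=j+1$ and $0$ otherwise; thus $\lim_N b_N$ is multiplied by $|\mathcal{B}^m_{j+1}|^{-1}$. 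As $\ell$ is unchanged, these two factors cancel exactly --- this is precisely the cancellation that the exponents $-\chi^k/2$ were designed to produce --- so $\mbox{Rep}_m$ is preserved.

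Finally I would handle the basepoint operation of item (3) together with the single-component claim. For item (3), a representation of $\mathcal{A}_2=\mathcal{A}_1*_{t=t_1\cdots t_l}\Z\langle t_1^{\pm1},\dots,t_l^{\pm1}\rangle$ is a representation of $\mathcal{A}_1$ together with a factorization $f(t)=f(t_1)\cdots f(t_l)$ into invertible degree-$0$ cycles; since $(\mathcal{B}^m_0)^*\cap\ker\delta$ is a group, there are exactly $|(\mathcal{B}^m_0)^*\cap\ker\delta|^{\,l-1}$ such factorizations, matching the change $\ell\mapsto\ell+(l-1)$ in the exponent of $|(\mathcal{B}^m_0)^*\cap\ker\delta|^{-\ell}$, while $f(s_i)\in T_i$ and the Reeb-chord degrees are untouched; hence the product is again unchanged, which also yields independence of the choices (A1)--(A3). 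For the single-component statement, I would observe that when $c=1$ any two admissible Maslov potentials (with the $|t_i|$ fixed) differ by a global constant, so every difference $\mu(U_i)-\mu(L_i)$, and therefore the entire $\Z$-grading on $(\mathcal{A}(K),\partial)$, is unchanged; by the second remark following Proposition~\ref{prop:DGAInvariance} the DGA itself is unchanged, whence so is $\mbox{Rep}_m$.
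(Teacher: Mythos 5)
Your proposal is correct and follows essentially the same route as the paper's proof: reduce via Proposition~\ref{prop:DGAInvariance} to tame isomorphisms (using Lemma~\ref{lem:odd} when $m$ is odd), algebraic stabilizations (where the factor $\lvert\mathcal{B}^m_k\rvert$ gained in the raw count cancels against the change $\chi^k\mapsto\chi^k+2$ in the normalization), and basepoint splitting (where the $\lvert(\mathcal{B}^m_0)^*\cap\ker\delta\rvert^{l-1}$ factorizations cancel against the $\ell$-dependent factor), with the single-component claim handled by noting that Maslov potentials differ by a global constant. The only detail you gloss over is independence of the choice of initial basepoint $t_{e_i}$ in (A3) --- which cyclically permutes the product $s_i$ and requires the paper's brief observation that basepoints can be cyclically repositioned by a Legendrian isotopy --- but this does not affect the substance of the argument.
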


\begin{proof} Suppose that $(K,\mu)$ is equipped with basepoints and,
  in the case $m$ is odd, an additional $\Z$-valued Maslov potential as specified
  by the choices (A1)-(A3). When $K$ and $K'$ are related by a
  (basepoint and Maslov potential preserving) Legendrian isotopy,
  then, by Proposition~\ref{prop:DGAInvariance} (2), their $\Z$-graded
  DGAs, $(\mathcal{A}(K), \partial)$ and
  $(\mathcal{A}(K'), \partial')$, are stable tame isomorphic. That is,
  after performing some number of algebraic stabilizations (where two
  new generators of degrees $\lvert a\rvert = \lvert b\rvert +1 =k$
  with differentials $\partial a = b$ and $\partial b = 0$ are added)
  on $\mathcal{A}(K)$ and $\mathcal{A}(K')$ we obtain DGAs
  $S\mathcal{A}(K)$ and $S\mathcal{A}(K')$ that are isomorphic via a
  tame isomorphism, $\phi$.  In particular, $\phi$ fixes the
  invertible generators $t_i$ corresponding to basepoints, so that the
  pullback $\phi^*$ gives a bijection between the two relevant sets of
  representations of $S\mathcal{A}(K)$ and $S\mathcal{A}(K')$.  When
  $m$ is even, as all $t_i$ have degree $0$, the tameness of $\phi$
  implies that the degree distribution of generators, and hence also
  all $\chi^k$, agree for $S\mathcal{A}(K)$ and $S\mathcal{A}(K')$.
  It follows that representation numbers as in (\ref{eq:numbers})
  computed from $S\mathcal{A}(K)$ and $S\mathcal{A}(K')$ are equal.
  When $m$ is odd, all $t_i$ have their degrees divisible by $2m$,
  therefore a tame isomorphism between $S\mathcal{A}(K)$ and
  $S\mathcal{A}(K')$ preserves the mod $2m$ degree distribution of
  generators.  We can then apply Lemma \ref{lem:odd} to conclude the
  representation numbers are equal.

  Next, we check that $\mathcal{A}(K)$ and $S\mathcal{A}(K)$ produce
  the same representation numbers.  Stabilizing a DGA $\alg$ in degree
  $k$ changes $|\overline{\mbox{Rep}}_m(K, (\mathcal{B},\delta),
  \mathbf{T})|$ by a factor of $|\mathcal{B}^m_k|$, as the extensions
  of a given $m$-graded representation of $\alg$ to the stabilization
  arise from choosing $f(a) \in \mathcal{B}^m_k$ arbitrarily and
  putting $f(b) = \delta f(a)$.  Since stabilizing in degree $k$
  increases $\chi^{k}$ by $2$ and leaves all $\chi^{l}$ with $l \neq
  k$ unchanged, the product in (\ref{eq:numbers}) remains invariant.

  We next check independence of the choices (A1)-(A3).  For (A2), note
  that no additional choice is made when $m$ is even; when $m$ is odd,
  Lemma \ref{lem:odd} shows that the augmentation numbers only depend
  on the degree distribution of generators mod $2m$, and this is
  uniquely determined by the original $\Z/2m$-valued Maslov potential,
  $\mu$.  For (A3), independence of the choice of initial basepoints,
  $t_{e_i}$, follows since the location of basepoints can be
  cyclically permuted by a Legendrian isotopy of $K$.

  For (A1), we check that the representation numbers are independent
  of the number of basepoints used.  The algebra
  $(\mathcal{A}, \partial)$ with a single basepoint $t$ on a component
  $K_i \subset K$ is related to an algebra $(\mathcal{A}', \partial)$
  with multiple (cyclically ordered) basepoints $t_1, \ldots, t_l$ on
  $K_i$ as in Theorem \ref{prop:DGAInvariance} (3). As a result,
  $m$-graded representations $f: (\mathcal{A}', \partial') \rightarrow
  (\mathcal{B}, \delta)$ are in bijection with pairs $(g,h)$ of
  $m$-graded representations where $g:(\mathcal{A},\partial)
  \rightarrow (\mathcal{B}, \delta)$, $h:(\Z\langle t_1^{\pm1},
  \ldots, t_l^{\pm1} \rangle,0) \rightarrow (\mathcal{B}, \delta)$,
  and $g(t)= h(t_1\cdots t_l)$. All of the $t_i$ are cycles of degree
  $0$ mod $m$, so for a given $g$, $g(t) \in (\mathcal{B}^m_0)^* \cap
  \ker \delta$ and there are always $|(\mathcal{B}^m_0)^* \cap \ker
  \delta|^{l-1}$ ways to choose $h$.  [Put $h(t_1) = g(t)(h(t_2\cdots
  t_l))^{-1}$, and choose $h(t_i)$ with $ 2 \leq i \leq l$
  arbitrarily.]  This increase in the number of representations is
  counteracted by the $|(\mathcal{B}^m_0)^* \cap \ker \delta|^{-\ell}$
  factor in (\ref{eq:numbers}).

  The final statement about independence of $\mu$ when $K$ is
  connected follows since, in the connected case, any other Maslov
  potential for $K$ is related to $\mu$ by the addition of an overall
  constant.  Thus, the degree distribution of generators and the
  resulting grading on $\mathcal{A}(K)$ are independent of $\mu$.
\end{proof}

\subsection{Augmentation numbers}

Representations into a target of the form $(\mathcal{B}, \delta) =
(\mathbb{F}, 0)$ with $\mathbb{F}$ a field located in graded degree
$0$ are also called {\bf augmentations} of $(\mathcal{A}, \partial)$;
the set of all $m$-graded augmentations to $\mathbb{F}$ is denoted
$\overline{\mbox{Aug}}_m(K,\mathbb{F})$.  When $\mathbb{F}=
\mathbb{F}_q$ is the finite field of order $q$ and all $T_i=
\mathbb{F}_q^*$ the representation numbers defined above are called
{\bf augmentation numbers}, denoted $\mbox{Aug}_m(K,q)$, and satisfy
\[\mbox{Aug}_m(K,q) =q^{-\sigma_m/2} (q-1)^{-\ell} \cdot
|\overline{\mbox{Aug}}_m(K,\mathbb{F}_q)|\]
with $\ell$ the total number of basepoints on $K$ and
\begin{equation} \label{eq:chi2} \sigma_{m} = \lim_{N \rightarrow
    \infty} \, \sum_{\begin{array}{c} |k| \leq N \\ k=0\, \mbox{mod}
      \, m \end{array}} \chi^k.
\end{equation}
Note that in terms of the degree distribution of Reeb chords,
$(r_n)_{n \in \Z}$,
\begin{align}
  & \mbox{for $m=0$}, \quad \quad \quad \quad && \sigma_0  = \chi^0 = \sum_{l \geq 0} (-1)^lr_l + \sum_{l<0} (-1)^{l+1}r_l,  \label{eq:sigma11} \\
  & \mbox{for even $m>0$}, \quad \quad \quad \quad && \sigma_m =
  \sum_{k \in \Z} (2k+1)s_k, \quad \quad \mbox{where } s_k =
  \sum_{l=0}^{m-1}(-1)^lr_{mk+l}, \\
  & \mbox{for odd $m$}, \quad \quad \quad \quad && \sigma_m = \sum_{l
    =0}^{m-1}(-1)^l \left(\sum_{i = l \, \mbox{mod} \,m} r_i
  \right). \label{eq:sigma13}
\end{align}
The latter two formulas are verified as in the proof of Lemma
\ref{lem:odd}; $\sigma_m$ can be viewed as the unique homomorphism
$\Z^{-\infty,\infty} \rightarrow \Z$ that satisfies $\sigma_m(e_0) =
1$ and, for all $i\in \Z$, $\sigma_m(e_{i-1}+e_{i})=
\left\{\begin{array}{cr} 2, & i = 0\, \mbox{mod}\, m, \\ 0, &
    \mbox{else.} \end{array} \right.$
\begin{remark}
  From the calculation of $\sigma_m$ in
  (\ref{eq:sigma11})-(\ref{eq:sigma13}), one sees that the
  normalization for $Aug_m(K,q)$ used above agrees with that used in
  \cite{NRSS} and, in the case $q=2$ with $m=0$ or $m$ odd, in
  \cite{NgSabloff}.  The normalization used in \cite{HenryR} differs
  slightly.
\end{remark}

It was proven in \cite{HenryR} that the collection of $m$-graded
augmentation numbers and the $m$-graded ruling polynomial are
equivalent invariants.

\begin{theorem}[\cite{HenryR, NRSS}] \label{thm:HenryR} Let $K\subset
  J^1\R$ be a Legendrian link equipped with a $\Z/2r(K)$-valued Maslov
  potential, and let $m \geq 0$ have $m \,|\, 2r(K)$.  If $m$ is even,
  assume $r(K)=0$.  Then, for any prime power $q$,
  \[
  \mbox{Aug}_m(K,q) = R^m_K(q^{1/2}-q^{-1/2}).
  \]
\end{theorem}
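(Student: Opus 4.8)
The plan is to stratify the finite set $\overline{\mbox{Aug}}_m(K,\mathbb{F}_q)$ according to the normal ruling that each augmentation induces, to count the augmentations in each stratum as an explicit monomial in $q$ and $q-1$, and then to check that summing these counts and applying the normalization factor $q^{-\sigma_m/2}(q-1)^{-\ell}$ reproduces $\sum_\rho (q^{1/2}-q^{-1/2})^{j(\rho)} = R^m_K(q^{1/2}-q^{-1/2})$. Since both sides are Legendrian isotopy invariants --- the augmentation number by Proposition \ref{prop:Invariance}, and $R^m_K$ by its characterization through the skein relations --- I am free to first move $K$ into a convenient front.

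I would use a ``dipped'' front, in which the $xz$-projection is horizontal away from crossings and cusps, all of which occur at distinct $x$-values, and a dip is inserted just before each crossing. The effect is to make the Chekanov--Eliashberg differential upper triangular with respect to the height ordering of the strands, so that an augmentation is governed by a finite sequence of chain isomorphisms together with handleslide data. This realizes the correspondence of Henry and the second author between augmentations over $\mathbb{F}_q$ and ``$A$-form'' Morse complex families: each augmentation determines such a family, and each family has a well-defined underlying $m$-graded normal ruling $\rho$, read off from the crossings at which a switch occurs.

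The core computation is the per-stratum count. Fixing $\rho$, I would put the family of differentials into a Barannikov-type normal form and show that the augmentations inducing $\rho$ are parametrized by independent choices in $\mathbb{F}_q$ at the ``return'' crossings (contributing powers of $q$), independent choices in $\mathbb{F}_q^\ast$ at the switches (contributing powers of $q-1$), and $\mathbb{F}_q^\ast$ choices for the basepoint generators; the grading restriction --- that only crossings of degree $0 \bmod m$ may switch, together with normality --- enters exactly in identifying which crossings are free. The total number of $q$-parameters then assembles, via the degrees of the crossings and cusps, into the shifted Euler characteristic $\sigma_m$.

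The main obstacle is the final bookkeeping. After dividing by $q^{\sigma_m/2}(q-1)^{\ell}$, the contribution of each ruling must collapse to exactly $(q^{1/2}-q^{-1/2})^{\#\text{switches}-\#\text{right cusps}}$; this requires pairing each switch's factor $(q-1)$ with a compensating $q^{-1/2}$ to build $q^{1/2}-q^{-1/2}=q^{-1/2}(q-1)$, and showing that the remaining powers of $q$ cancel precisely against $\sigma_m/2$. The identity that makes this work is the characterization of $\sigma_m$ as the homomorphism $\Z^{-\infty,\infty}\to\Z$ with $\sigma_m(e_0)=1$ and $\sigma_m(e_{i-1}+e_i)$ equal to $2$ when $i\equiv 0 \bmod m$ and $0$ otherwise, recorded just after \eqref{eq:sigma13}: every graded crossing and cusp then contributes a predictable amount to both $\sigma_m$ and to $j(\rho)$, and a careful comparison finishes the argument. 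This is carried out in detail in \cite{HenryR}, with the normalization conventions reconciled as in \cite{NRSS}.
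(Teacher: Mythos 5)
Your outline is a faithful reconstruction of the argument in \cite{HenryR} (and, for $m$ even, \cite{NRSS}): stratifying augmentations of a dipped front by the induced $m$-graded normal ruling, counting each fiber as a monomial in $q$ and $q-1$ from returns, switches, and basepoints, and matching the normalization $q^{-\sigma_m/2}(q-1)^{-\ell}$ against $(q^{1/2}-q^{-1/2})^{j(\rho)}$. The paper does not reprove the theorem --- its proof is a two-line citation to Remark 3.3(ii) of \cite{HenryR} and Proposition~16 of \cite{NRSS} --- so your sketch is essentially the same approach as the proofs being cited, with the hard per-stratum count and bookkeeping deferred to those references exactly as the paper does.
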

\begin{proof}
  Remark 3.3 (ii) in \cite{HenryR} observes the formula in the case
  where $m=0$ or $m$ is odd as a variant of Theorem 1.1 of
  \cite{HenryR}.  When $m$ is even, the result is obtained in
  Proposition~16 of \cite{NRSS}.
\end{proof}

\begin{remark}
  In \cite{NRSSZ}, a unital $A_\infty$-category, $\mbox{Aug}_+(K)$, is
  introduced having as objects the augmentations of $K$; see \cite{BC}
  for a non-unital precursor.  It is shown in \cite{NRSS} that
  $\mbox{Aug}_0(K,q)$ can alternatively, and perhaps more naturally,
  be interpreted as the homotopy cardinality of $\mbox{Aug}_+(K)$.
  More general representation categories have been defined in
  \cite{CRGG}, but we do not pursue this direction any further in the
  present article.
  \end{remark}

\subsection{Representations on a DG-vector space}\label{sec:repDGVS}
Let $(V, d)$ be a $\Z$-graded vector space
and let ${d:V \rightarrow V}$ be a (cohomologically graded) differential
with degree $+1$ mod $m$, i.e. $d^2=0$ and $d(V_k^m) \subset
V^m_{k+1}$.  There is an induced differential on $\mathit{End}(V)$
given by the graded commutator with $d$,
\[
\delta: \mathit{End}(V) \rightarrow \mathit{End}(V), \quad \delta(T) =
[d,T] = d \circ T - (-1)^{|T|} T \circ d,
\]
where {\it when $m$ is odd we assume that $V$ is defined over a field
  of characteristic $2$}.  Moreover, $(-\mathit{End}(V), \delta)$ is a
$\Z/m$-graded DGA (with homological grading) where $-\mathit{End}(V)$
denotes $\mathit{End}(V)$ equipped with the negative of its standard
grading collapsed mod $m$,
\[
(-\mathit{End}(V))_k = \oplus_{l \in \Z} \mathit{Hom}(V_{l+k}, V_{l}),
\quad \quad -\mathit{End}(V) = \bigoplus_{k \in \Z/m}
(-\mathit{End}(V))_k^m.
\]
Assuming that $V$ is finite dimensional over the finite field $\mathbb{F}_q$, considering the {\bf graded dimension}
\[ {\bf n}: \Z \rightarrow \Z_{\geq 0}, \quad \mathbf{n}(j) =
\dim(V_j)
\]
leads to the formula
\[
|(-\mathit{End}(V))_k| = q^{\sum_{l \in \Z} \mathbf{n}(l+k) \cdot
  \mathbf{n}(l)}.
\]
In particular, $(-\mathit{End}(V), \delta)$ satisfies (B2), so
representation numbers are defined when $V$ is finite.

When $(\mathcal{B}, \delta)=(-\mathit{End}(V), \delta)$ we denote the
sets of representations, $\overline{\mbox{Rep}}_m(K,
(\mathcal{B},\delta), \mathbf{T}) $, by $\overline{\mbox{Rep}}_m(K,
(V,d), \mathbf{T})$, and when $V$ is finite we notate the associated
representation numbers as
\[
\mbox{Rep}_m(K, (V,d), \mathbf{T}).
\]

\begin{example}
  We compute here the $2$-graded representation number,
  $\mbox{Rep}_2(K, (\mathbb{F}_q^2,0), \mathit{GL}(2, \mathbb{F}_q))$,
  with $\mathbb{F}_q^2$ in grading $0$, for the Legendrian $m(5_2)$
  knot pictured in Figure \ref{fig:m52}.  Since $\mathbf{T}=\mathit{GL}(2,
  \mathbb{F}_q)$, no restriction is placed on the image of $t$.  We
  shorten notation to $\mbox{Rep}_2(K, \mathbb{F}_q^2):=
  \mbox{Rep}_2(K, (\mathbb{F}_q^2,0), \mathit{GL}(2, \mathbb{F}_q))$,
  and note that in Section \ref{sec:color} this is called the {\it total
    $2$-dimensional representation number}.  See also
  Example~\ref{ex:trefoil} for an example where $\mathbf{T}$ is a
  proper subset of $\mathit{GL}(n, \mathbb{F}_q)$.

\begin{figure}
\labellist
  \pinlabel {$t$} [l] at 384 260
  \pinlabel {$e_1$} [b] at 310 268
  \pinlabel {$e_2$} [b] at 322 188
  \pinlabel {$e_3$} [b] at 312 114
  \pinlabel {$e_4$} [b] at 302 28
  \pinlabel {$c_1$} [b] at 252 150
  \pinlabel {$c_2$} [b] at 238 70
  \pinlabel {$c_3$} [b] at 24 200
  \pinlabel {$a$} [b] at 278 236
  \pinlabel {$b$} [b] at 204 224
      \endlabellist
  
  \centerline{ \includegraphics[scale=.5]{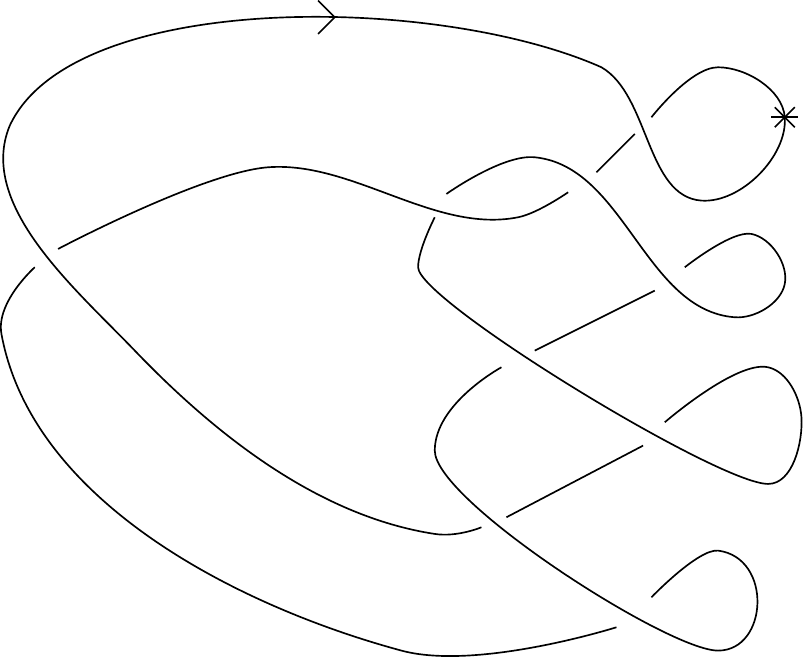} }
  \caption{ An $xy$-diagram (only topologically accurate) for a
    Legendrian $m(5_2)$ knot.}
  \label{fig:m52}
\end{figure}

\begin{proposition} \label{prop:Rep2biject}
There is a bijection 
\[
\overline{\mbox{Rep}}_2(K, \mathbb{F}_q^n) \quad \leftrightarrow \quad \big\{ (A,B) \in \mathit{Mat}(n,\mathbb{F}_q) \times \mathit{Mat}(n,\mathbb{F}_q) \,|\, E_{-1}(AB) = \{0\} \big\}
\]
where $E_{-1}(AB)$ denotes the $(-1)$-eigenspace for $AB$.
\end{proposition}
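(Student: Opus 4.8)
The plan is to unwind the definition of a $2$-graded representation into $(-\mathit{End}(V), \delta)$ for $V = \mathbb{F}_q^n$ concentrated in degree $0$, and then read the constraints directly off the differential of the diagram in Figure~\ref{fig:m52}. Since $V$ sits in a single degree, the induced differential $\delta = [d,\cdot]$ vanishes (as $d=0$), and $(-\mathit{End}(V))^2_0 = \mathit{Mat}(n,\mathbb{F}_q)$ while $(-\mathit{End}(V))^2_1 = \{0\}$. Hence a $2$-graded representation $f\colon (\mathcal{A}(K),\partial) \to (-\mathit{End}(V),\delta)$ is precisely a unital algebra homomorphism that kills every odd-degree generator, sends each even-degree Reeb chord to an arbitrary matrix, sends $t$ to an element of $\mathit{GL}(n,\mathbb{F}_q)$ (forced by $tt^{-1}=1$), and satisfies $f\circ\partial = 0$.

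First I would record the $\Z$-grading coming from the Maslov potential in Figure~\ref{fig:m52}: computing the degrees $|a_i| = \mu(U_i)-\mu(L_i)+\mathit{ind}(a_i)-1$, one finds that $a$ and $b$ are the only Reeb chords of even degree, while $c_1,c_2,c_3,e_1,e_2,e_3,e_4$ are odd. Consequently $f$ is forced to vanish on the latter seven generators, and the only free data are the matrices $A := f(a)$ and $B := f(b)$ together with the invertible matrix $f(t)$. The chain-map condition $f(\partial g)=0$ is automatic whenever $g$ has even degree (both sides lie in $(-\mathit{End}(V))^2_1 = \{0\}$), so the content of $f\circ\partial=0$ is concentrated in the equations indexed by the odd-degree generators.

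Next I would compute $\partial g$ for each odd-degree generator $g$ by counting the immersed disks in $\pi_{xy}(K)$, keeping track of signs and of the basepoint letter $t$ as in the conventions of Section~\ref{sec:background}. The expectation is that, after applying $f$ and using that $f$ kills all odd generators, all but one of these equations become vacuous, while the single surviving equation---the one coming from the generator whose disks sweep past the basepoint---reads $I + f(t) + AB = 0$ (up to an overall sign and the interchange of $AB$ with $BA$). Solving gives $f(t) = -(I+AB)$, so the requirement $f(t)\in \mathit{GL}(n,\mathbb{F}_q)$ becomes $\det(I+AB)\neq 0$, which is exactly the statement $E_{-1}(AB)=\{0\}$. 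In characteristic $2$ this reads identically, and the interchange $AB \leftrightarrow BA$ changes nothing, since $AB$ and $BA$ share their nonzero eigenvalues and $-1 \neq 0$.

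Finally, the assignment $f \mapsto (f(a),f(b))$ and the inverse assignment $(A,B)\mapsto f$---defined by $f(a)=A$, $f(b)=B$, $f(t)=-(I+AB)$, and $f=0$ on all odd generators---are mutually inverse, yielding the claimed bijection. The main obstacle is the middle step: correctly extracting the differential from the (only topologically accurate) $xy$-diagram and verifying with the correct $\Z$-signs that every equation other than the basepoint relation is automatically satisfied, so that no constraint on $(A,B)$ beyond $E_{-1}(AB)=\{0\}$ survives.
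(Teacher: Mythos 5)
Your overall strategy---unwind the definition of a $2$-graded representation into $(-\mathit{End}(\mathbb{F}_q^n),0)$, note that odd-degree generators must die, and read the constraints off the differential---is the same as the paper's, but your guess at the degree distribution is wrong in a way that breaks the argument. In the paper's grading one has $|b|=-2$, $|c_1|=|c_2|=|c_3|=0$, $|e_1|=\cdots=|e_4|=1$, $|a|=2$: the three $c_i$ are \emph{even}, not odd, so $f$ does not kill them. If the $c_i$ were odd as you claim, the relation $\partial e_3 = 1+c_1c_2$ would force $1=0$ and the representation set would be empty; your proposed inverse map (with $f(c_i)=0$) fails this same equation, so it does not define a representation at all.

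The actual structure is that all four equations $f\circ\partial(e_i)=0$ survive and are nonvacuous: $\partial e_2 = 1+(1+ab)c_1$ forces $(1+AB)f(c_1)=-1$, which is where the invertibility of $1+AB$ (equivalently $E_{-1}(AB)=\{0\}$) comes from---not from the invertibility of $f(t)$. The equations for $e_3$ and $e_4$ then determine $f(c_2)=1+AB$ and $f(c_3)=-(1+AB)^{-1}$, and the $e_1$ equation gives $f(t)=-(1+AB)^{-1}(1+BA)$, not $-(I+AB)$; one still needs the observation that $1+BA$ is invertible because $AB$ and $BA$ share nonzero eigenvalues. Injectivity of $f\mapsto(A,B)$ holds precisely because all of $f(c_1),f(c_2),f(c_3),f(t)$ are determined by $(A,B)$ through these four equations. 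So while the forward map and the final characterization of the image coincide with the paper's, the middle of your argument---which you yourself flagged as the main obstacle---does not go through as described and needs to be replaced by the computation above.
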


\begin{proof}
  The generators of $\mathcal{A}(K)$ as pictured in Figure
  \ref{fig:m52} have degrees
  \[
  |b|=-2, \quad |c_1|=|c_2|=|c_3| = 0, \quad |e_1|=|e_2|=|e_3|=|e_4| =
  1, \quad |a|=2, \quad |t|=0,
  \]
  and the only non-zero differentials are
  \[
  \partial e_1=t+(-c_3)(1+ba), \quad \partial e_2=1+(1+ab)c_1,
  \quad \partial e_3=1+c_1c_2, \quad \partial e_4= 1+c_2c_3.
  \]
  We claim that the map $f \in \overline{\mbox{Rep}}_2(K,
  \mathbb{F}_q^2) \mapsto (A,B) = (f(a), f(b))$ gives the required
  bijection.  Note that the equation $0 = f\circ \partial(e_2)$ shows
  that $(1+f(a)f(b)) f(c_1) = -1$, so that $1+f(a)f(b)$ is invertible,
  i.e. $-1$ is not an eigenvalue of $AB$ as required.  The inverse map
  takes a pair $(A,B)$ with $E_{-1}(AB) = \{0\}$ to the $2$-graded
  representation defined by
  \begin{align*}
    & f(a)=A, \, f(b) =B, \, f(c_1) = -(1+AB)^{-1}, \\
    & f(c_2) = 1+AB, \, f(c_3)= -(1+AB)^{-1}, \,
    f(t)=-(1+AB)^{-1}(1+BA)
  \end{align*}
  where one should note $(1+BA)$ is invertible (making the definition
  of $f(t)$ valid) since $AB$ and $BA$ have the same eigenvalues.
  [For $\lambda \in \mathbb{F}_q^*$, if $v\neq 0$ and $v \in
  E_{\lambda}(AB)$, then $B(v) \neq 0$ and $B(v) \in E_{\lambda}(BA)$.]
\end{proof}

\begin{proposition} The total count of $2$-graded representations on
  $(\mathbb{F}_q^2,0)$ is
  \[
  |\overline{\mbox{Rep}}_2(K, \mathbb{F}_q^2)| = q^2 - q^3 + 2q^5 -
  q^6 - q^7 + q^8
  \]
\end{proposition}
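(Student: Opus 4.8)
The plan is to reduce to a matrix count via Proposition~\ref{prop:Rep2biject}, which identifies $\overline{\mbox{Rep}}_2(K,\mathbb{F}_q^2)$ with the set of pairs $(A,B) \in \mathit{Mat}(2,\mathbb{F}_q) \times \mathit{Mat}(2,\mathbb{F}_q)$ such that $E_{-1}(AB) = \{0\}$, equivalently such that $I + AB$ is invertible. Using the identity $\det(I + M) = 1 + \operatorname{tr}(M) + \det(M)$ valid for $2 \times 2$ matrices, the defining condition becomes $1 + \operatorname{tr}(AB) + \det(AB) \neq 0$. I would count these pairs by fixing $A$ and counting the admissible $B$, organizing the count according to $\operatorname{rank}(A) \in \{0,1,2\}$; the three strata contain $1$, $(q-1)(q+1)^2$, and $|\mathit{GL}(2,\mathbb{F}_q)| = q^4 - q^3 - q^2 + q$ matrices respectively, and these numbers sum to $q^4$ as a consistency check.

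The two extreme strata are immediate. If $A = 0$ then $I + AB = I$ is invertible for every $B$, so all $q^4$ matrices $B$ are admissible. If $A$ is invertible, then left multiplication by $A^{-1}$ shows $I + AB$ is invertible if and only if $A^{-1} + B$ is; since $B \mapsto A^{-1} + B$ is a bijection of $\mathit{Mat}(2,\mathbb{F}_q)$, exactly $|\mathit{GL}(2,\mathbb{F}_q)|$ choices of $B$ work for each such $A$.

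The essential case is $\operatorname{rank}(A) = 1$. Here $\operatorname{rank}(AB) \leq 1$ forces $\det(AB) = 0$, so the admissibility condition collapses to $\operatorname{tr}(AB) \neq -1$. Writing $A = u v^{T}$ with $u, v \neq 0$, cyclicity of trace gives $\operatorname{tr}(AB) = v^{T} B u$, a nonzero linear functional in the entries of $B$ (nonzero since $u, v \neq 0$); hence precisely $q^{4-1} = q^{3}$ matrices $B$ satisfy $\operatorname{tr}(AB) = -1$, leaving $q^4 - q^3$ admissible $B$. Assembling the three strata, I would write the total as
\[
|\overline{\mbox{Rep}}_2(K,\mathbb{F}_q^2)| = 1 \cdot q^4 \;+\; (q-1)(q+1)^2 (q^4 - q^3) \;+\; |\mathit{GL}(2,\mathbb{F}_q)|^2,
\]
and expanding (using $|\mathit{GL}(2,\mathbb{F}_q)| = q^4 - q^3 - q^2 + q$) yields $q^8 - q^7 - q^6 + 2q^5 - q^3 + q^2$, matching the claimed value. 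The only non-bookkeeping step---the main obstacle---is the rank-one analysis: recognizing that $\det(AB)$ vanishes so that the quadratic condition $\det(I+AB)\neq 0$ becomes the single linear condition $\operatorname{tr}(AB) \neq -1$, and that $B \mapsto \operatorname{tr}(AB)$ is surjective onto $\mathbb{F}_q$. Everything else is a routine expansion of polynomials in $q$.
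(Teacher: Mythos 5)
Your proof is correct, but it organizes the count differently from the paper. Both arguments start from the bijection of Proposition~\ref{prop:Rep2biject} with pairs $(A,B)$ such that $E_{-1}(AB)=\{0\}$, i.e.\ $I+AB$ invertible. The paper then stratifies by the rank of the \emph{product} $C=AB$: it computes $X_k$, the number of rank-$k$ matrices $C$ with $E_{-1}(C)=\{0\}$, and separately $Y_k$, the number of factorizations of a rank-$k$ matrix as $C=AB$, and sums $\sum_k X_kY_k$; the counts $Y_1$ and $Y_0$ require a case analysis over the ranks of the factors and some care with kernels and images. You instead stratify by the rank of $A$ alone and count admissible $B$ fiberwise. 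The two extreme strata ($A=0$ and $A\in\mathit{GL}(2,\mathbb{F}_q)$) are trivial in your setup, and your key observation --- that for $\operatorname{rank}(A)=1$ the quadratic condition $\det(I+AB)=1+\operatorname{tr}(AB)+\det(AB)\neq 0$ collapses to the single nonzero linear condition $v^TBu\neq -1$ after writing $A=uv^T$ --- replaces the paper's most laborious computations ($Y_1$, $Y_0$, and the eigenvalue counts $X_1$, $X_2$) with one clean linear-algebra step. Your stratum sizes ($1$, $(q-1)(q+1)^2$, $q^4-q^3-q^2+q$, summing to $q^4$) and the final expansion
\[
q^4 + (q^3+q^2-q-1)(q^4-q^3) + (q^4-q^3-q^2+q)^2 = q^8-q^7-q^6+2q^5-q^3+q^2
\]
check out. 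The trade-off is that the paper's method yields the intermediate quantities $X_k$ (the eigenvalue-avoidance counts) and $Y_k$ (factorization counts) as reusable data, whereas your method is shorter and generalizes more transparently: for $n\times n$ matrices the same fibering over $\operatorname{rank}(A)$ still works, though the rank-$r$ strata with $1<r<n$ no longer linearize as cleanly.
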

\begin{proof}
  Use the bijection from Proposition \ref{prop:Rep2biject}, and note
  that the number of pairs of $2\times2$ matrices $(A,B)$ such that
  $-1$ is not an eigenvalue for $AB$ is
  \[
  |\overline{\mbox{Rep}}_2(K, \mathbb{F}_q^2)|= X_2 \cdot Y_2 +
  X_1\cdot Y_1 + X_0 \cdot Y_0
  \]
  where
  \[
  X_k = \left|\big\{ C \in \mathit{Mat}(2, \mathbb{F}_q) \,|\,
    \mathit{rank}(C)=k \quad \mbox{and} \quad E_{-1}(C)=\{0\}
    \big\}\right|
  \]
  and $Y_k$ is the number of ways to factor a rank $k$ matrix $C$ into
  a product $C=AB$ with $A,B \in \mathit{Mat}(2, \mathbb{F}_q)$.  For
  carrying out the counts $X_k$ and $Y_k$, it is useful to fix, for
  each line $\ell \in \mathbb{P}^1 := \mathbb{P}(\mathbb{F}_q^2)$, a
  complementary vector $v_\ell \in \mathbb{F}_q^2$ such that $\ell
  \oplus \mathit{Span}_{\mathbb{F}_q}\{v_{\ell}\} = \mathbb{F}_q^2$.
  The counts are as follows.
 
  \begin{enumerate}

  \item $X_2 = (q^2-1)(q^2-q) - \left[(q+1)(q^2-q-1) +1 \right]$:
    \quad Here, the different terms correspond to writing $X_2 =
    |\mathit{GL}(2,\mathbb{F}_q)| - (|W_1|+|W_2|)$ where $W_i= \{D \in
    \mathit{GL}(2,\mathbb{F}_q) \,|\, \dim E_{-1}(D) = i\}$.  It is
    standard that $|\mathit{GL}(2,\mathbb{F}_q)|=(q^2-1)(q^2-q)$ and
    $W_2 = \{-I\}$.  Finally, notice there is a bijection
    \[
    \{(\ell, w)\in\mathbb{P}^1\times\mathbb{F}_q^2 \,|\, \ell \in \mathbb{P}^1, \, w \neq -v_\ell, \,
    \mbox{and} \, w \notin \ell\} \, \leftrightarrow W_1
    \]
    where $(\ell,w)$ is mapped to the matrix $D$ with $E_{-1}(D) =
    \ell$ and $D(v_{\ell}) = w$.  For each $\ell \in \mathbb{P}^1$
    there are $(q^2-q-1)$ choices for $w$, so
    $|W_1|=|\mathbb{P}^1|\cdot (q^2-q-1) = (q+1)(q^2-q-1)$ as
    required.

  \item $X_1 = (q^2-q-1)(q+1)$: \quad We have $|X_1| = |R_1|- |S_1|$
    where $R_1$ is the set of all rank $1$ matrices and $S_1 \subset
    R_1$ are those that have $-1$ as an eigenvalue.  Note that $S_1$
    is in bijection with the set of ordered pairs $(\ell, w)$ with $w
    \in \ell$, so we can compute
    \[
    |R_1| -|S_1| = \left(1\cdot (q^2-1)+(q^2-1)\cdot q \right) -
    \left((q+1)\cdot q\right)
    \]
    where the $1$-st (resp. $2$-nd) term for $|R_1|$ is the count of
    rank 1 matrices whose first column is $0$ (resp. is non-zero).

  \item $X_0 = 1$: \quad This is obvious.

  \item $Y_2 = (q^2-1)(q^2-q)$: \quad When $C \in GL(2,
    \mathbb{F}_q)$, and $C =AB$, it must be the case that $A,B \in
    GL(2, \mathbb{F}_q)$.  As $B = A^{-1}C$ is uniquely determined by
    $A$ which may be chosen arbitrarily, we have $Y_2=|GL(2,
    \mathbb{F}_q)|$.

  \item $Y_1 = 2 (q^2-1)(q^2-q) +(q^2-1)q$: \quad When $C$ has rank
    $1$ and $C=AB$, there are $3$ disjoint possibilities for $(A,B)$:
    $A \in GL(2)$ and $B \in R_1$; $A \in R_1$ and $B
    \in GL(2)$; or $A \in R_1$ and $B \in R_1$.  In the first two
    cases, the rank 1 matrix is uniquely determined by $C$ and the
    rank $2$ matrix which may be chosen arbitrarily.  These two cases
    account for the $2(q^2-q)(q^2-q) = 2|GL(2)|$ term.  In the last
    case, when $C=AB$ with all matrices rank 1, note that
    $\ker(B) = \ker(C) = \ell$ for some $\ell \in
    \mathbb{P}^1$.  A $B$ with this property is then uniquely
    determined by $Bv_\ell$ which may be chosen arbitrarily in
    $\mathbb{F}_q^2 \setminus \{0\}$.  For each of these $(q^2-1)$
    choices of $B$, $A$ must be chosen to satisfy $A(B(v_\ell)) = C
    v_\ell$ and to have $1$-dimensional kernel.  Thus, we have
    $|\mathbb{P}^1 \setminus \{ \mathit{Span}\{v_\ell\}\}| = q$
    choices for $A$.

  \item $Y_0 = q^4 + (q^2-1)(q+1)q^2+ (q^2-1)(q^2-q)$: The collection
    of ordered pairs $(A,B)$ with $AB=0$ is subdivided into disjoint
    subsets $T_0 \sqcup T_1 \sqcup T_2$ where the subscript denotes
    the rank of $A$.  Note that $|T_0|=q^4$ since $A=0$ and $B$ may be
    arbitrary.  In $T_1$, given $A \in R_1$, $B$ must be chosen with
    $\mbox{im}\, B \subset \ker A$ so there are $q^2$ choices for $B$
    (since each column of $B$ may be an arbitrary vector in $\ker A$).
    Thus, $|T_1| = |R_1|q^2 = (q^2-1)(q+1)q^2$.  Finally, in $T_2$
    there are $|GL(2)|$ choices for $A$, while $B$ must be zero.

  \end{enumerate}

\end{proof}

With the count of representations in hand, we now compute the
$2$-graded representation number $\mbox{Rep}_2(K, \mathbb{F}_q^2)$.
As $(\mathcal{B}, \delta) = (-\mathit{End}(\mathbb{F}_q^2),0)$ is
concentrated in degree $0$, we have $|\mathcal{B}^m_k| =
\left\{ \begin{array}{cr} q^4, & k=0 \,\mbox{mod}\, m \\ 1, &
    \mbox{else}.\end{array} \right.$ The degree distribution for $K$
is $r_{-2}=1, r_0=3, r_1=4, r_2=1$ and all other $r_k=0$.  Therefore,
the factors from (\ref{eq:numbers}) are
\[
\lim_{N \rightarrow +\infty} \prod_{k \in \Z, \lvert k\rvert\leq N}
\lvert\mathcal{B}^2_k\rvert^{-(\chi^k/2)} =
(q^4)^{-(\chi^{-2}+\chi^0+\chi^2)/2} = (q^4)^{-(-1+1+1)/2} = q^{-2}
\]
and
\[
\lvert(\mathcal{B}^m_0)^* \cap \ker \delta\rvert^{-1} =
\left[(q^2-1)(q^2-q)\right]^{-1}.
\]
Thus, we have
\begin{equation} \label{eq:Count} \mbox{Rep}_2(K, \mathbb{F}_q^2) =
  q^{-2}\cdot \left[(q^2-1)(q^2-q)\right]^{-1} \cdot (q^2 - q^3 + 2q^5
  - q^6 - q^7 + q^8),
\end{equation}
and the reduced representation number is
\[
\widetilde{\mbox{Rep}}_2(K, \mathbb{F}_q^2) = q^{-4} ( q^2 - q^3 +
2q^5 - q^6 - q^7 + q^8) = q^{-2}-q^{-1}+2q^1-q^2 -q^3+q^4.
\]

\end{example}

\section{Braids and path matrices} \label{sec:Path}

A braid with only positive (in the sense of writhe) crossings may be
viewed as a Legendrian $\beta \subset J^1S^1$.  In \cite{Kalman},
K{\'a}lm{\'a}n associated, in the context of front projections, a
matrix to a positive braid called its path matrix.  These matrices
play a key role in describing the DGA of a Legendrian satellite formed
with positive braid as pattern.  In this section, we consider path
matrices in both the $xz$- and $xy$-projection setting, and prove in
Proposition~\ref{prop:Pxy1} an identity for the differential of an
$xy$-path matrix that will be crucial in Section \ref{sec:Sat}.
We then introduce classes of braids that we call ``path injective''
and ``path generated'' for which the path matrix controls certain
aspects of the DGA.  We conclude the section by considering positive
permutation braids which, when represented with reduced permutation
braid words, are shown to satisfy both of the above conditions
(Proposition~\ref{prop:permutation}) and to provide a decomposition of
$GL(n, \mathbb{F})$ via their path matrices
(Proposition~\ref{prop:bruhat}).  In fact, this coincides with the
celebrated Bruhat decomposition of $GL(n, \mathbb{F})$.

\subsubsection{Notational convention} 
We will use the following notation:

\begin{convention} \label{conv:1} For an algebra, $A$, denote by
  $\mathit{Mat}(n,A)$ the algebra of $n \times n$ matrices with
  entries from $A$.  For a linear map $\alpha: A_1 \rightarrow A_2$,
  we use the same notation $\alpha: \mathit{Mat}(n, A_1) \rightarrow
  \mathit{Mat}(n, A_2)$ for the linear map obtained from applying
  $\alpha$ entry-by-entry.
\end{convention}

Note that if $\alpha:A_1 \rightarrow A_2$ is an algebra homomorphism
or derivation, then so is $\alpha: \mathit{Mat}(n, A_1) \rightarrow
\mathit{Mat}(n, A_2)$.

\subsection{Positive braids and path matrices} \label{sec:PathMatrix}

A positive $n$-stranded braid with endpoints at $x=0$ and $x=1$ may be
considered as a Legendrian link $\beta \subset J^1S^1$; the diagram of
the braid is the front projection of the link.  We number braid
strands from top to bottom, so that positive braids may be written as
products (composed left to right) of the braid group generators
$\sigma_i$, $1\leq i \leq n-1$ where $\sigma_1 \in B_n$ and
$\sigma_{n-1} \in B_n$ are crossings of the top two strands and bottom
two strands respectively.

The resolution procedure from \cite{NgTraynor} provides an
$xy$-diagram for $\beta$ of a standard form.  Topologically, this
diagram is obtained from the front projection of $\beta$ by viewing
$S^1 = [0,1]/\{0,1\}$ and adding a full dip to the right of the
crossings of $\beta$.  See Figure \ref{fig:BraidRes}.  When
considering the algebra $\mathcal{A}(\beta)$, we label the crossings
of the $xy$-projection of $\beta$ as
\[p_1, \ldots, p_s; \quad x_{i,j}, y_{i,j}, \mbox{ for $1 \leq i < j
  \leq n$},\]
where the $p_i$ are the crossings from the front projection of $\beta$
labeled from left to right; $x_{i,j}$ (resp. $y_{i,j}$) is the
crossing in the left (resp. right) half of the dip where the $i$-th
strand of $\beta$ crosses over the $j$-th strand of $\beta$.

\labellist
\small 
\pinlabel $x$ [l] at 28 16 
\pinlabel $z$ [b] at 2 44 
\pinlabel $x$ [l] at 380 16 
\pinlabel $y$ [b] at 354 44
\endlabellist

\begin{figure}
  \centerline{ \includegraphics[scale=.6]{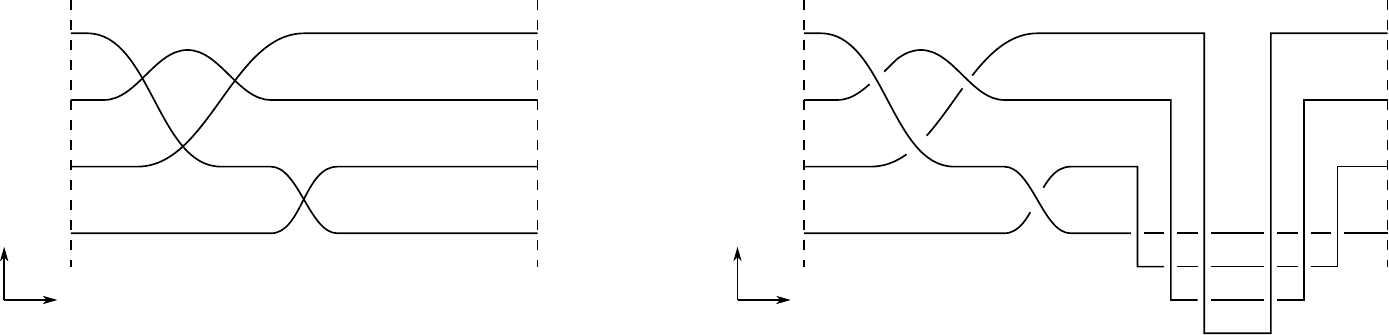} }
  \caption{ The positive braid $\beta = \sigma_1
    \sigma_2\sigma_1\sigma_3$ pictured in the front projection (left)
    and in its resolved Lagrangian projection (right).}
  \label{fig:BraidRes}
\end{figure}

For a positive braid $\beta \subset J^1S^1$ (with orientation), we
next consider several versions of the \emph{path matrices} introduced
in \cite{Kalman}.  Cut $S^1 \times \R$ along the vertical line where
$x=0$ and $x=1$ to view the $xz$- and $xy$- projections of $\beta$ as
subsets of $[0,1]\times \R$.  Call a continuous path $\gamma:[0,1]
\rightarrow [0,1]\times \R$ with $p_{[0,1]} \circ \gamma =
\mathit{id}_{[0,1]}$ and with image in the $xz$-projection
(resp. $xy$-projection) of $\beta$ an {\bf $xz$-section} (resp. {\bf
  $xy$-section}) of $\beta$.  We say that an $xz$- or $xy$-section,
$\gamma$, has {\bf downward} (resp. {\bf upward}) {\bf negative
  corners} if at all of the corners of $\gamma$ (these can only occur
at crossings of the projection) the region of $[0,1] \times \R$ that
lies above (resp. below) $\gamma$ covers a single quadrant of the
crossing with negative Reeb sign.  To each section $\gamma$, we
associate a sign $\iota(\gamma)$ and a word $w(\gamma)$ in the
generators of $\alg(\beta)$.
\begin{itemize}
\item For both $xz$- and $xy$-sections: $\iota(\gamma)$ is the product
  of orientation signs from the corners of $\gamma$ (as in Figure
  \ref{fig:Reeb}).

\item For $xz$-sections: When $\gamma$ has downward (resp. upward)
  negative corners we orient $\gamma$ in the increasing
  (resp. decreasing) $x$-direction.  Then, $w(\gamma)$ is the product
  of corners that $\gamma$ passes ordered according to the orientation
  of $\gamma$.

\item For $xy$-sections: The word $w(\gamma)$ is as in the $xz$-case
  except that the invertible generators $t_i^{\pm1}$ also appear
  whenever a basepoint is passed.  (The exponent $\pm1$ is determined
  by whether the orientations of $\gamma$ and $\beta$ agree or not at
  $t_i$.)
\end{itemize}

\begin{definition} \label{def:pathmatrix} The \textbf{left-to-right}
  $\mathbf{xz}$-\textbf{path matrix}, $P^{xz}_\beta \in
  \mathit{Mat}(n,\alg(\beta))$, has $(i,j)$-entry
  \[
  \sum_{\gamma} \iota(\gamma) w(\gamma)
  \]
  where the sum is over $xz$-sections of $\beta$ with downward
  negative corners with left (resp. right) endpoint on the $i$-th
  (resp. $j$-th) strand of $\beta$.  The $\mathbf{xy}$-\textbf{path
    matrix}, $P^{xy}_\beta\in \mathit{Mat}(n,\alg(\beta))$ is defined
  in the same manner using $xy$-sections.

  Similarly, {\bf right-to-left path matrices}, $Q^{xz}_{\beta}$ and
  $Q^{xy}_{\beta}$, are defined to have $(i,j)$-entry obtained from
  summing over sections with upward corners that have their right
  (resp. left) endpoint on the $i$-th (resp. $j$-th) strand of
  $\beta$.
\end{definition}

\begin{example} \label{ex:nonreduced} If the crossings in the braid in
  Figure~\ref{fig:BraidRes} are labeled $p_1,\ldots,p_4$ from left to
  right, then the left-to-right $xz$-path matrix is
  \[P^{xz}_\beta=\begin{pmatrix}
    (-1)^{\mu_3}p_2+(-1)^{\mu_2+\mu_3}p_1p_3&(-1)^{\mu_2}p_1&(-1)^{\mu_4}p_4&1\\
    (-1)^{\mu_3}p_3&1&0&0\\
    1&0&0&0\\
    0&0&1&0
  \end{pmatrix}.\] Here, the sign $(-1)^{\mu_i}$ is $+1$ (resp. $-1$)
  if the $i$-th strand (as ordered at the left side of the braid) is
  oriented to the right (resp. left).
\end{example}

\medskip

Typically, we will place a collection of basepoints $*_1, \ldots, *_n$
with corresponding generators $t_1, \ldots, t_n$ on the strands of
$\beta$ at an $x$-value just to the left of the crossings of $\beta$;
here, $*_i$ belongs to the $i$-th strand of $\beta$.  Fix a Maslov
potential $\mu_\beta$ for $\beta$, and let $(\mu_1, \ldots, \mu_n)$ be
the values of $\mu_\beta$ at the basepoints $*_1, \ldots, *_n$.  We
define (invertible) diagonal matrices
\begin{equation} \label{eq:SigDelta} \Sigma =
  \mathit{diag}((-1)^{\mu_1}, \ldots, (-1)^{\mu_n}), \quad
  \Delta = \mathit{diag}(t_{1}^{(-1)^{\mu_1}}, \ldots,
    t_n^{(-1)^{\mu_n}}).
\end{equation} 
Moreover, let $X$ and $Y$ denote {\it strictly upper triangular}
matrices with entries $x_{i,j}$ and $y_{i,j}$ for ${1\leq i<j\leq
n}$.

\begin{proposition} \label{prop:xypath} Suppose that $\beta \subset
  J^1S^1$ has basepoints placed as above, and that the $xy$-diagram of
  $\beta$ is related to the $xz$-diagram via the resolution procedure.
  Then,
  \[P^{xy}_{\beta} = \Delta \cdot P^{xz}_{\beta} \cdot (I +X
  \Sigma).\]
\end{proposition}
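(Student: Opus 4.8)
The plan is to prove the identity by a \emph{transfer-matrix} decomposition of the path matrix along the three natural horizontal regions of the resolved $xy$-diagram of $\beta$. First I would record the general principle that $xy$-path matrices multiply under horizontal concatenation: if we cut $S^1\times\R$ along a vertical line $x=x_0$ that misses all crossings and basepoints, then every $xy$-section with downward negative corners splits at $x_0$ into a left piece and a right piece meeting at a common strand, and both the word $w(\gamma)$ and the orientation sign $\iota(\gamma)$ factor as the (ordered) product of the contributions of the two pieces, since no corner occurs at $x_0$. Summing over the intermediate strand index is exactly matrix multiplication, so the path matrix of the whole region is the product of the path matrices of the two pieces, and the downward-negative-corner condition, being local at each crossing, is inherited by each piece.

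Next I would apply this with two cuts, splitting the diagram into: (I) the region containing only the basepoints $*_1,\dots,*_n$; (II) the region of the braid crossings $p_1,\dots,p_s$; and (III) the dip, with left-half crossings $x_{i,j}$ and right-half crossings $y_{i,j}$. This gives $P^{xy}_\beta = M_{\mathrm{I}}\,M_{\mathrm{II}}\,M_{\mathrm{III}}$, and it remains to identify the three factors. Region~(I) contains no crossings, so the only sections run straight across; a section entering on the $i$-th strand crosses $*_i$, contributing $t_i^{(-1)^{\mu_i}}$ by the $\pm1$ convention of Definition~\ref{def:pathmatrix} together with \eqref{eq:SigDelta}, whence $M_{\mathrm{I}}=\Delta$. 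For Region~(II), the resolution procedure of \cite{NgTraynor} leaves the braid crossings unchanged (it only appends the dip), so I would set up a bijection between the $xy$-sections through this region and the $xz$-sections of the front of $\beta$ that matches both the words (no $t_i$ occur here) and the orientation signs of Figure~\ref{fig:Reeb}; this identifies $M_{\mathrm{II}}=P^{xz}_\beta$.

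The crux is the dip computation $M_{\mathrm{III}}=I+X\Sigma$, and this is where I expect the main difficulty. I would analyze a downward-negative-corner section entering the dip on the $l$-th strand directly from the geometry of the resolved dip. The claim to establish is that such a section either runs straight through the dip, giving the diagonal contribution $1$ (the $I$ term), or makes a single downward negative corner, and that this corner can occur only at a left-half crossing $x_{l,j}$ with $l<j$, carrying the section to the $j$-th strand at $x=1$, with word $x_{l,j}$ and orientation sign $(-1)^{\mu_j}$ coming from the orientation of the understrand $j$ (Figure~\ref{fig:Reeb}); right-half crossings $y_{i,j}$ admit no downward negative corner. Assembling these contributions gives the $(l,j)$-entry $x_{l,j}(-1)^{\mu_j}=(X\Sigma)_{lj}$ for $l<j$ and $0$ for $l>j$, so that $M_{\mathrm{III}}=I+X\Sigma$. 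The delicate points here are verifying that no section turns more than once in the dip and that turns are confined to the left half, both forced by the over/under pattern of the dip together with the requirement that the region above each corner cover a single negative-Reeb quadrant, and pinning down the orientation sign as exactly $(-1)^{\mu_j}$; once these are in hand the proposition follows by combining the three factors.
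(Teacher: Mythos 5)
Your proposal is correct and follows essentially the same route as the paper: the paper's (very terse) proof likewise reads the factorization off the $xy$-diagram region by region, with the key observation being exactly your dip analysis — that a downward-negative-corner section passing through the dip can turn at most once, and only at an $x_{i,j}$ crossing, yielding the factor $I+X\Sigma$. Your transfer-matrix decomposition and the sign bookkeeping $(-1)^{\mu_i}$ for the basepoints and $(-1)^{\mu_j}$ for the dip corners are consistent with the definitions of $\Delta$ and $\Sigma$ in \eqref{eq:SigDelta}, so this is simply a fully written-out version of the paper's argument.
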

\begin{proof}
  Recalling that Maslov potentials are required to be even (resp. odd)
  along strands oriented to the right (resp. left), this follows
  examining the $xy$-projection.  [When a path with downward negative
  corners passes through the dip, because of Reeb signs at most a
  single corner can occur, and this can only happen at an $x_{i,j}$
  generator.]
\end{proof}

\begin{proposition} \label{prop:PathProp} Let $\alpha, \beta \subset
  J^1S^1$ be positive $n$-stranded braids with $xy$-diagrams and
  basepoints as above.  The path matrices satisfy:
  \begin{enumerate}
  \item $P^{xz}_{\alpha * \beta} = P^{xz}_{\alpha}\cdot
    P^{xz}_{\beta}$,
  \item $Q^{xz}_{\beta} = (P^{xz}_{\beta})^{-1}$, and $Q^{xy}_{\beta}
    = (P^{xy}_\beta)^{-1}$.
		\item The $(i,j)$-entry of $P^{xy}_\beta$ has degree $\mu_i-\mu_j$ in $\mathcal{A}(\beta)$.  
  \end{enumerate}
\end{proposition}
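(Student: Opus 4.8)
The plan is to prove the stronger statement that \emph{every} summand $\iota(\gamma)w(\gamma)$ appearing in the $(i,j)$-entry of $P^{xy}_\beta$ is homogeneous of degree $\mu_i-\mu_j$; since all such summands then share a common degree, the entry itself is homogeneous of degree $\mu_i-\mu_j$. The main tool is to track the Maslov potential along an $xy$-section $\gamma$ with downward negative corners running from the $i$-th strand at the left edge to the $j$-th strand at the right edge. For $x\in[0,1]$ away from the finitely many corners and basepoints that $\gamma$ meets, let $\mu_\gamma(x)$ denote the value of $\mu_\beta$ on the braid strand carrying $\gamma(x)$; this is well defined and locally constant because braids have no cusps, so $\mu_\beta$ changes along a strand only when a basepoint is crossed. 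The goal is the telescoping identity $|w(\gamma)| = \mu_\gamma(0^+) - \mu_\gamma(1^-)$ together with the boundary evaluations $\mu_\gamma(0^+)=\mu_i$ and $\mu_\gamma(1^-)=\mu_j$.

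For the boundary evaluations, I would first note that no crossings occur between the left edge and the basepoints, so $\gamma$ remains on the $i$-th strand until it passes $*_i$, giving $\mu_\gamma(0^+)=\mu_i$. The identity $\mu_\gamma(1^-)=\mu_j$ follows because $\mu_\beta$ is continuous (locally constant) across the seam $x=0\sim x=1$, where there is no basepoint; hence the value on the $j$-th strand at the right edge agrees with its value on the $j$-th strand at the left edge, namely $\mu_j$. The telescoping identity itself is then proved feature-by-feature: I show that each generator contributed to $w(\gamma)$ has degree equal to the drop $\mu_\gamma(\text{before})-\mu_\gamma(\text{after})$ across the corresponding feature, so that summing over all features collapses to $\mu_\gamma(0^+)-\mu_\gamma(1^-)$. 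At a basepoint $*_k$, the grading convention of Section \ref{sec:Zgrading} makes $\mu_\beta$ decrease by $|t_k|$ when $*_k$ is crossed in the orientation direction; a short case check on whether the orientations of $\gamma$ and $\beta$ agree shows that the recorded factor $t_k^{\pm1}$ has degree exactly $\pm|t_k|$, matching the jump in $\mu_\gamma$.

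At a corner, $\gamma$ turns at a crossing $c$ and switches strands, so $\mu_\gamma(\text{before})-\mu_\gamma(\text{after})$ is the difference of the Maslov potentials of the two strands meeting at $c$, and I must verify that $|c|$ equals this difference. This local computation is the main obstacle. Writing $|c|=\mu(U)-\mu(L)+\mathit{ind}(c)-1$ as in Section \ref{sec:background}, the task is to show that a downward negative corner---one where the region above $\gamma$ covers a single negative-Reeb quadrant, in the sense of Figure \ref{fig:Reeb}---occurs precisely when $\mathit{ind}(c)=1$ and $\gamma$ passes from the overstrand $U$ to the understrand $L$, so that $|c|=\mu(U)-\mu(L)=\mu_\gamma(\text{before})-\mu_\gamma(\text{after})$. (Note the transition cannot be from $L$ to $U$: that would force $\mathit{ind}(c)-1=-2(\mu(U)-\mu(L))$, which fails for generic potentials, so the degree formula is consistent only with the over-to-under pattern.) I would establish the geometric half by drawing the local model of an $xy$-crossing and using the Reeb-sign and orientation conventions of Figure \ref{fig:Reeb} to determine, for the left-to-right oriented section, which turning pattern places a negative quadrant directly above the corner, then matching this over-to-under transition with the Morse index of $z|_U-z|_L$.

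To keep the local casework finite, I would invoke the proof of Proposition \ref{prop:xypath}, which shows that inside the dip at most a single corner can occur and only at an $x_{i,j}$ generator; thus the only corners are at the front crossings $p_k$ and the dip generators $x_{i,j}$, and it suffices to run the check for those, where one finds $|x_{i,j}|=\mu_i-\mu_j$ and $|p_k|$ equal to the relevant overstrand-minus-understrand potential difference. Summing the per-feature identities then yields $|w(\gamma)|=\mu_i-\mu_j$, completing the proof. As an alternative route, the same conclusion can be extracted from the factorization $P^{xy}_\beta=\Delta\, P^{xz}_\beta\,(I+X\Sigma)$ of Proposition \ref{prop:xypath} together with the multiplicativity $P^{xz}_{\alpha*\beta}=P^{xz}_\alpha P^{xz}_\beta$ of Proposition \ref{prop:PathProp}(1): degrees of the form $\nu_i-\nu_j$ are preserved under matrix multiplication (the cross terms telescope through the shared boundary potentials), so the statement reduces to the degrees of the diagonal factor $\Delta$, of the single-crossing matrices $P^{xz}_{\sigma_k}$, and of the $x_{i,j}$ in $I+X\Sigma$---which are exactly the elementary local computations isolated above.
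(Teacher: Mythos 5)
Your argument addresses only part (3) of the proposition; parts (1) and (2) are never proved, and indeed your ``alternative route'' at the end invokes the multiplicativity $P^{xz}_{\alpha*\beta}=P^{xz}_{\alpha}P^{xz}_{\beta}$ of part (1) as if it were already available. That is a genuine gap with respect to the statement as given. In the paper, (1) and the $xz$-half of (2) are obtained by extending the concatenation and cancellation arguments of Section~3 of \cite{Kalman} from $\Z/2$ to $\Z$ coefficients, and the $xy$-half of (2) requires an additional computation: one writes $Q^{xy}_{\beta}=W\cdot Q^{xz}_{\beta}\cdot\Delta^{-1}$ where $W$ is the right-to-left path matrix of the dip, identifies the words of right-to-left sections through the dip (which may have arbitrarily many corners at the $x_{i,j}$ but none at the $y_{i,j}$) with the entries of $I+(-X\Sigma)+(-X\Sigma)^2+\cdots$, and concludes $W=(I+X\Sigma)^{-1}$, so that $Q^{xy}_{\beta}=(P^{xy}_{\beta})^{-1}$ follows from the factorization $P^{xy}_{\beta}=\Delta\,P^{xz}_{\beta}\,(I+X\Sigma)$. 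None of this appears in your write-up, so the inverse statement for $xy$-path matrices is unsupported.

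For part (3) itself, your telescoping argument is correct and is essentially the paper's proof: the paper simply observes that the Maslov potential along $\gamma$ drops by $|c_l|$ at each corner, so $\mu_j=\mu_i-|w(\gamma)|$. You supply more detail than the paper does, in particular the explicit treatment of the basepoint factors $t_k^{\pm1}$ (which the paper's one-line proof quietly absorbs into the convention that $\mu$ drops by $|t_k|$ at $*_k$) and the reduction of the corner check to the $p_k$ and $x_{i,j}$ generators. One caution: your parenthetical argument that the corner transition ``cannot be from $L$ to $U$ because the degree formula would fail for generic potentials'' is reasoning backwards from the conclusion; the over-to-under pattern at a downward negative corner must be read off from the local model and the Reeb-sign conventions of Figure~\ref{fig:Reeb}, as you acknowledge you still need to do. As written that local verification is a sketch rather than a proof, but it is the right computation and is exactly what the paper leaves implicit.
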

\begin{proof}
  Over $\Z/2$, the $xz$-statements from (1) and (2) are found in
  Section 3 of \cite{Kalman}, and are extended readily to the
  $\Z$-coefficient case.  For the $xy$-statement, note that
  \[Q^{xy}_{\beta} = W \cdot Q^{xz}_{\beta} \cdot \Delta^{-1} = W
  \cdot (P^{xz}_\beta)^{-1} \cdot \Delta^{-1},\] where $W$ is the
  right-to-left path matrix associated to the dip.  Right to left
  paths, $\gamma$, through the dip starting at the $i$-th strand and
  ending at the $j$-th strand can have many corners at the $x_{i,j}$
  (but none at the $y_{i,j}$), and have words of the form
  $((-1)^{\mu_{j_1}+1}x_{i,j_1})((-1)^{\mu_{j_2}+1}x_{j_1,j_2})\cdots
  ((-1)^{\mu_{j_{m}}+1}x_{j_{m},j})$ for some $i< j_1< \cdots < j_{m}
  < j$ with $m\geq 0$.  The sum of such words is precisely the
  $(i,j)$-entry of $(I+X\Sigma)^{-1} = I +(-X\Sigma) + (-X\Sigma)^2 +
  \cdots$, so $W = (I+X\Sigma)^{-1}$ and $Q^{xy}_{\beta}=
  (P^{xy}_\beta)^{-1}$ follows from Proposition~\ref{prop:xypath}.
	
  To prove (3), consider a path $\gamma$ oriented left-to-right with
  downward corners, and suppose ${w(\gamma) = c_1\cdots c_m}$.  (The
  $c_l$ are either $xz$-crossings of $\beta$ or $x_{r,s}$ generators.)
  Note that the value of the Maslov potential $\mu$ along $\gamma$
  changes from $p$ to $p-|c_i|$ when the corner at $c_i$ is passed, so
  ${\mu_j = \mu_i - \sum_{l}|c_l| = \mu_i - |w(\gamma)|}$.  The result
  follows.
\end{proof}

\begin{proposition} \label{prop:Pxy1} For any positive braid $\beta$,
  in the Chekanov-Eliashberg DGA of $\beta$
  we have the identity
  \begin{equation} \label{eq:Pxy1} \Sigma \cdot \partial(
    P^{xy}_\beta) = P^{xy}_\beta(Y\Sigma)- (Y\Sigma)P^{xy}_\beta.
  \end{equation}
\end{proposition}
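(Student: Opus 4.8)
The plan is to work directly from the definition of $P^{xy}_\beta$ as the matrix whose $(i,j)$-entry is $\sum_\gamma \iota(\gamma) w(\gamma)$, summed over $xy$-sections $\gamma$ from strand $i$ to strand $j$ with downward negative corners (Definition \ref{def:pathmatrix}), and to expand $\partial$ by the signed Leibniz rule on each word $w(\gamma)$. The first thing I would record is \emph{why} $\Sigma$ appears on the left. By Proposition \ref{prop:PathProp}(3) the $(i,j)$-entry of $P^{xy}_\beta$ is homogeneous of degree $\mu_i-\mu_j$, and a short computation shows that on any matrix $M$ with this degree pattern the operator $M \mapsto \Sigma\,\partial M$ obeys the \emph{unsigned} Leibniz rule $\Sigma\,\partial(MN) = (\Sigma\,\partial M)\,N + M\,(\Sigma\,\partial N)$: the intermediate Koszul sign $(-1)^{\mu_k}$ is exactly absorbed by the two copies of $\Sigma$. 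Thus left-multiplying by $\Sigma$ is precisely what turns the matrix-level differential into a clean derivation, and the right-hand side of (\ref{eq:Pxy1}) is the commutator $[P^{xy}_\beta, Y\Sigma]$.

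With the signs under control, I would reinterpret $\partial w(\gamma)$ geometrically. Writing $w(\gamma)=c_1\cdots c_m$ (the factors $t_k^{\pm1}$ from basepoints are cycles and do not contribute), each term of $\sum_l \pm\, c_1\cdots(\partial c_l)\cdots c_m$ records a holomorphic disk $u$ with positive corner at a corner $c_l$ of $\gamma$; gluing $u$ to $\gamma$ along the shared crossing replaces $c_l$ by the string of negative corners of $u$ and yields a new $xy$-section $\gamma'$ with downward corners agreeing with $\gamma$ away from $c_l$. This reorganizes $\Sigma\,\partial(P^{xy}_\beta)_{ij}$ as a signed count of spliced configurations (a path carrying one attached disk). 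As an organizational aid I would keep in mind the factorization $P^{xy}_\beta=\Delta\,P^{xz}_\beta\,(I+X\Sigma)$ of Proposition \ref{prop:xypath}, together with $\partial\Delta=0$, which localizes the analysis to disks whose positive corner sits at a front crossing (inside $P^{xz}_\beta$), at a left-dip crossing $x_{i,j}$, or at a right-dip crossing $y_{i,j}$.

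The heart of the argument is then a cancellation in the spirit of $\partial^2=0$. I expect configurations whose positive corner lies at a front crossing or at an $x_{i,j}$ to admit a canonical partner obtained by re-splicing at an adjacent corner, cancelling in sign-reversing pairs, while the survivors are exactly those whose positive corner sits at a right-dip crossing $y_{i,j}$ --- the rightmost crossings, with no room to be re-split on the right. Reading a survivor off gives a path recorded by $P^{xy}_\beta$ together with a single $y$-corner, i.e.\ an entry of $Y\Sigma$. The two commutator terms come from the two ways the $y$-corner attaches across the $S^1$-seam of $J^1S^1$: when a $y_{k,j}$ corner is the last feature of the path it contributes $P^{xy}_\beta(Y\Sigma)$, and when the disk wraps the seam so that a $y_{i,k}$ corner is read first it contributes $(Y\Sigma)P^{xy}_\beta$, the opposite orientation signs at these configurations (Figure \ref{fig:Reeb}) producing the minus sign.

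The main obstacle will be making this cancellation precise: one must exhibit the fixed-point-free involution on the non-surviving configurations and check that it is exhaustive, so that no disk with positive corner at a front or $x_{i,j}$ crossing is left unpaired and no spurious survivors arise. Tied to this is the sign bookkeeping --- reconciling $\iota(\gamma)$, the disk signs $\iota(u)$, the Koszul signs absorbed by the left $\Sigma$, and the diagonal sign inside $Y\Sigma$ --- which is most delicate for the wrap-around disks responsible for the $(Y\Sigma)P^{xy}_\beta$ term. Once the involution and the wrap-around count are pinned down, the identity (\ref{eq:Pxy1}) follows.
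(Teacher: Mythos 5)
Your reduction of the sign issue is sound: since the $(i,j)$-entry of $P^{xy}_\beta$ has degree $\mu_i-\mu_j$ (Proposition \ref{prop:PathProp}(3)), the operator $M\mapsto \Sigma\,\partial M$ is indeed an unsigned derivation on products of such matrices, and the paper uses exactly this fact. But the heart of your argument is not there. You defer the entire content of the proposition --- ``one must exhibit the fixed-point-free involution on the non-surviving configurations and check that it is exhaustive'' --- to a step you describe only as an expectation, and the one structural claim you do make about it is wrong. You say the survivors are the configurations ``whose positive corner sits at a right-dip crossing $y_{i,j}$.'' No contributing configuration has its positive corner at a $y_{i,j}$: the word $w(\gamma)$ of a section with downward negative corners contains only $p_k$'s, $x_{i,j}$'s, and $t_i^{\pm1}$'s, so every disk appearing in $\partial w(\gamma)$ has its positive corner at a $p_k$ or an $x_{i,j}$. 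The $y$-generators enter only as \emph{negative} corners of those disks (e.g.\ $\partial p = \pm t_k^{\mp1}y_{k,k+1}t_{k+1}^{\pm1}$, $\partial x_{i,j} = y_{i,j}+\cdots$). The correct dichotomy is between spliced words in which the inserted $y$ ends up at the extreme right or (after absorbing the $t^{\mp1}$ factors) at the extreme left of the entry, versus everything else; identifying that dichotomy, producing the involution on the remainder, and tracking $\iota(\gamma)$, $\iota(u)$, and the wrap-around signs is precisely the hard part, and none of it is carried out. As written, the proposal is a plan for a proof, not a proof.

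For comparison, the paper avoids the global cancellation argument entirely. It proves a stronger relative statement, $\Sigma_0\cdot\partial P^{xy}_\beta = P^{xy}_\beta(Y_1\Sigma_1)-(Y_0\Sigma_0)P^{xy}_\beta$, for a braid flanked by two (possibly distinct) dips, by induction on the length of $\beta$: the base case ($\beta=\emptyset$, i.e.\ a single dip) and the one-crossing case $\beta=\sigma_k$ are finite explicit computations using the known dip differentials from \cite{Sab1,L1}, and the inductive step concatenates $\beta_1*\beta_2$ via a DGA quotient map that deletes the intermediate dip (Sivek's van Kampen theorem), combined with the derivation property you identified and the multiplicativity $P^{xz}_{\beta_1*\beta_2}=P^{xz}_{\beta_1}P^{xz}_{\beta_2}$. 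If you want to salvage a direct disk-counting proof, you would essentially be reproving a Kálmán-type identity from scratch; the inductive route localizes all the delicate sign and disk analysis to the two small base cases, which is why the paper takes it.
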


\begin{proof}

  For the purpose of induction, we will prove a more general
  statement. Consider a subset $I \times \R \subset S^1\times \R$, with $I$ either an open interval or $S^1$, of some $xy$-diagram such that a
  positive braid $\beta$ appears in $I\times \R$ bordered on the left and on
  the right by two \emph{possibly distinct} dips.  We allow the
  possibility that the diagram may have other crossings outside of $I\times \R$. Let $X_0, Y_0$
  (resp. $X_1, Y_1$) denote strictly upper triangular matrices formed
  with the generators from the dip that occurs before (resp. after)
  $\beta$.  Let $\Sigma_0$ and $\Sigma_1$ be diagonal matrices with
  respective entries $(-1)^{\mu^0_i}$ and $(-1)^{\mu^1_i}$ where
  $\mu^0_i$ and $\mu^1_i$ denote the value of the Maslov potential on
  the $i$-th strand near the left and right dip respectively.  In this
  context, the path matrix of $\beta$, $P^{xy}_\beta$, has entries in
  the DGA of the larger diagram, and has its $(i,j)$-entry defined via
  considering paths that begin on the $i$-th strand just to the right
  of the $X_0,Y_0$ dip and proceed from left to right until they reach
  the $j$-th strand to the right of the $X_1,Y_1$ dip.

  We will show by induction on the length of $\beta$ that
  \begin{equation} \label{eq:Pxy2} \Sigma_0 \cdot \partial
    P^{xy}_{\beta} = P^{xy}_{\beta} (Y_1 \Sigma_1) - (Y_0 \Sigma_0)
    P^{xy}_{\beta}.
  \end{equation}
  When the dip to the left and right of $\beta$ are actually the same
  dip, this reduces to (\ref{eq:Pxy1}).

  \medskip

  \noindent {\bf Base Case:} $\beta = \emptyset$.  We compute from the
  definition
  \[
  \Sigma_0 \cdot \partial(I + X_1\Sigma_1) =
  (I+X_1\Sigma_1)(Y_1\Sigma_1) - \Delta^{-1} (Y_0\Sigma_0) \Delta
  (I+X_1\Sigma_1).
  \]
  [If necessary, see \cite{Sab1} or \cite{L1} where the disks that
  contribute to the differential are pictured.]  Thus, using
  Proposition~\ref{prop:xypath} and the signed Leibniz rule we compute
  \begin{align*}
    \Sigma_0 \cdot \partial( P^{xy}_\beta) &= \Sigma_0 \cdot \Delta \partial(I+X_1\Sigma_1) = \Delta (\Sigma_0 \cdot \partial(I+X_1\Sigma_1))  \\
    &= \Delta(I+X_1\Sigma_1)(Y_1\Sigma_1) - (Y_0\Sigma_0) \Delta
    (I+X_1\Sigma_1) = P^{xy}_\beta(Y_1 \Sigma_1) - (Y_0\Sigma_0)
    P^{xy}_\beta.
  \end{align*}

  \medskip

  The inductive step, has two parts.

  \medskip

  \noindent {\bf Step 1:} We establish the result for $\beta =
  \sigma_k$, where $1\leq k<n$. Let $p$ be the generator associated
  with the crossing in $\sigma_k$ of strands $k$ and $k+1$. Let $V_k$
  be the $n\times n$ identity matrix with the $2\times2$ block
  $\begin{pmatrix} (-1)^{\mu^1_k}p&1\\1&0\end{pmatrix}$ replacing the
  $k$-th and $(k+1)$-th entries on the diagonal. Since
  $P^{xz}_{\sigma_k}=V_k$,
  \[P^{xy}_{\sigma_k}=\Delta V_k(I+X_1\Sigma_1)\] and one can check
  that
  \[\Sigma_1\partial
  X_1\Sigma_1=(I+X_1\Sigma_1)Y_1\Sigma_1-V_k^{-1}\Delta^{-1}\widehat{Y}_0\Sigma_0\Delta
  V_k(I+X_1\Sigma_1),\] where $\widehat{Y}_0$ is $Y_0$ with the
  $(k,k+1)$-entry replaced by a zero.  [See \cite{Sab1} or \cite{L1}
  for individual disks.]  Noting that $\mu^1_{k+1}+\lvert
  p\rvert\equiv \mu^1_k\mod2$ and using the signed Leibniz rule, we
  then compute
  \begin{align}
    \Sigma_0\partial P^{xy}_{\sigma_k}&=\Sigma_0\Delta((\partial((-1)^{\mu^1_k}p))E_{k,k})(I+X_1\Sigma_1)\label{eqn:permutationCheck}\\
    &\quad+\Sigma_0\Delta
    V'_k\left(\Sigma_1(I+X_1\Sigma_1)Y_1\Sigma_1-\Sigma_1V_k^{-1}\Delta^{-1}\widehat{Y}_0\Sigma_0\Delta
      V_k(I+X_1\Sigma_1)\right),\nonumber
  \end{align}
  where $V'_k$ is $V_k$ with $\begin{pmatrix}
    (-1)^{\mu^1_k}p&1\\1&0\end{pmatrix}$ replaced by
  $\begin{pmatrix}(-1)^{\mu^1_{k+1}}p&1\\1&0\end{pmatrix}$ and
  $E_{i,j}$ is the $n\times n$ matrix with a single nonzero entry of
  $1$ as the $(i,j)$-entry.  Since
  \[\partial
  p=(-1)^{\mu^0_k+1}t_k^{(-1)^{\mu^0_k+1}}y^0_{k,k+1}t_{k+1}^{(-1)^{\mu^0_{k+1}}},\]
  the first term in \eqref{eqn:permutationCheck} is
  \[
  \left(\left((-1)^{\mu^1_k+1}y^0_{k,k+1}t_{k+1}^{(-1)^{\mu^0_{k+1}}}\right)E_{k,k}\right)(I+X_1\Sigma_1)
  = -\left(\left( y^0_{k,k+1}E_{k,k+1} \right) \Sigma_0 \Delta V_{k}
  \right)(I+X_1\Sigma_1),
  \]
  as $\mu^1_k = \mu^0_{k+1}$.  One can check that $\Sigma_0\Delta
  V'_k\Sigma_1=\Delta V_k$ and so $\Sigma_0\Delta
  V'_k\Sigma_1V_k^{-1}\Delta^{-1}=I$. Therefore
  \begin{align*}
    \Sigma_0\partial P^{xy}_{\sigma_k}&=-\left( y^0_{k,k+1}E_{k,k+1} \right) \Sigma_0 \Delta V_{k} (I+X_1\Sigma_1)+\Delta V_k(I+X_1\Sigma_1)Y_1\Sigma_1\\
    &\quad-\widehat{Y}_0\Sigma_0\Delta V_k(I+X_1\Sigma_1) \\
    & = \Delta V_k(I+X_1\Sigma_1)Y_1\Sigma_1 - Y_0\Sigma_0\Delta V_k(I+X_1\Sigma_1) \\
    & = P^{xy}_{\sigma_k}(Y_1\Sigma_1)-(Y_0\Sigma_0)P^{xy}_{\sigma_k}
  \end{align*}
  as desired.

  \medskip

  \noindent {\bf Step 2:} We show that if (\ref{eq:Pxy2}) holds for
  $\beta_1$ and $\beta_2$, then it holds for $\beta_1 * \beta_2$.

  Consider algebras $(\mathcal{A}_1, \partial_1)$ and
  $(\mathcal{A}_2, \partial_2)$ corresponding to the two $xy$-diagrams
  in Figure \ref{fig:A2}.  For the algebra $\mathcal{A}_1$ the diagram
  has three sets of dips with corresponding matrices of generators
  $X_k, Y_k$ and Maslov potential matrices $\Sigma_k$, for $0 \leq k
  \leq 2$; the braid $\beta_k$ appears between the $k-1$ and $k$ dips,
  for $1 \leq k \leq 2$. [The resolution procedure from
  \cite{NgTraynor} can always be modified to add extra sets of dips to
  the $xy$-projection between any crossings of the $xz$-projection;
  see \cite{Sab1}.]  Moreover, basepoints $t^k_i$, $1 \leq i \leq n$,
  are placed just to the left of each $\beta_k$, and are placed into
  matrices $\Delta_{k-1}$ with exponents $(\pm1)$ given by the
  corresponding entries of $\Sigma_{k-1}$.  For $\mathcal{A}_2$, the
  dip between $\beta_1$ and $\beta_2$ is removed, and there are no
  basepoints between $\beta_1$ and $\beta_2$.  The generators for
  $\mathcal{A}_2$ are the same as for $\mathcal{A}_1$, except there
  are no $\Delta_1$, $X_1$ and $Y_1$ generators.

  \labellist
  \small 
  \pinlabel $*$ at 148 96 
  \pinlabel $*$ at 148 80 
  \pinlabel $*$ at 148 64 
  \pinlabel $*$ at 148 48 
  \pinlabel $*$ at 136 224 
  \pinlabel $*$ at 136 240 
  \pinlabel $*$ at 136 256 
  \pinlabel $*$ at 136 272
  \pinlabel $*$ at 376 224 
  \pinlabel $*$ at 376 240 
  \pinlabel $*$ at 376 256 
  \pinlabel $*$ at 376 272 
  \pinlabel $\beta_1$ [b] at 200 240
  \pinlabel $\beta_1$ [b] at 232 64 
  \pinlabel $\beta_2$ [b] at 440 240
  \pinlabel $\beta_2$ [b] at 376 64 
  \pinlabel $Y_0$ [t] at 96 176
  \pinlabel $\Delta_0$ [t] at 136 216 
  \pinlabel $X_1$ [t] at 272 176
  \pinlabel $Y_1$ [t] at 336 176 
  \pinlabel $\Delta_1$ [t] at 376 216
  \pinlabel $X_2$ [t] at 512 176 
  \pinlabel $Y_2$ [t] at 576 176
  \pinlabel $Y_0$ [t] at 96 0 
  \pinlabel $\Delta_0$ [t] at 152 40
  \pinlabel $X_2$ [t] at 512 0 
  \pinlabel $Y_2$ [t] at 576 0
\endlabellist

\begin{figure}
  \centerline{ \includegraphics[scale=.5]{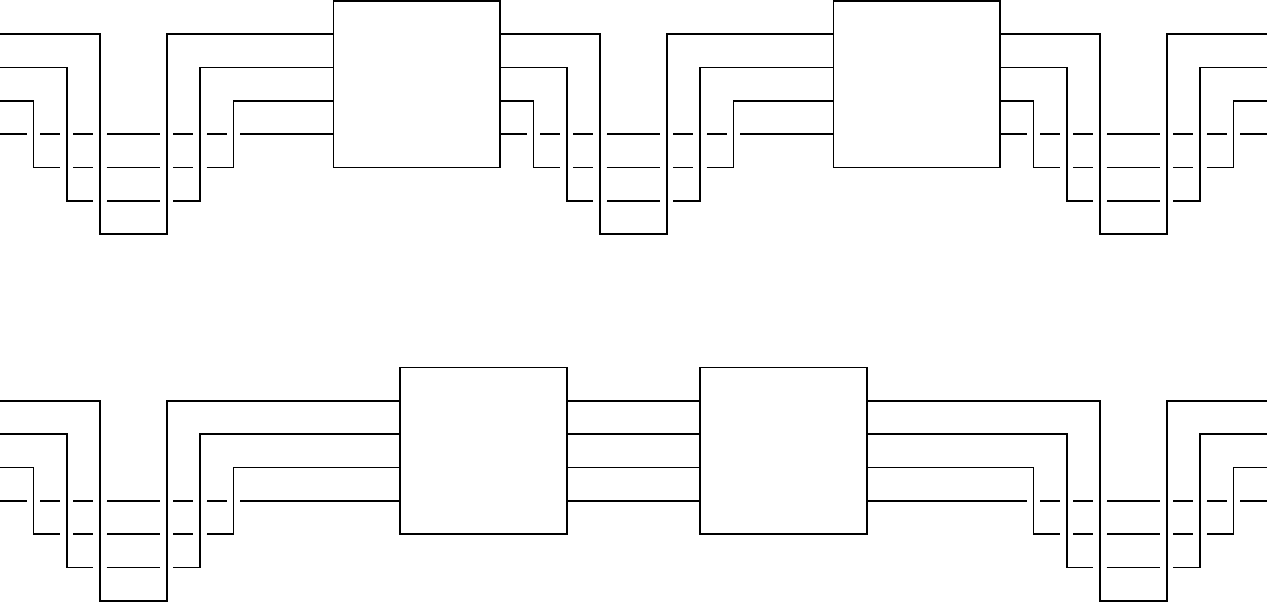} }
  \caption{ The $xy$-diagrams for the algebras
    $(\mathcal{A}_1, \partial_1)$ and $(\mathcal{A}_2, \partial_2)$.}
  \label{fig:A2}
\end{figure}

In $\mathcal{A}_1$, (i.e. in the context of the upper diagram from
Figure \ref{fig:A2}), the respective path matrices for $\beta_1$ and
$\beta_2$ are
\begin{equation} \label{eq:100} P^{xy}_{\beta_1} = \Delta_0
  P^{xz}_{\beta_1} (I+X_1 \Sigma_1), \quad \quad P^{xy}_{\beta_2} =
  \Delta_1 P^{xz}_{\beta_2}(I+X_2 \Sigma_2).
\end{equation}
In $\mathcal{A}_2$, the path matrix for $\beta_1 * \beta_2$ is
\begin{equation} \label{eq:101} P^{xy}_{\beta_1 * \beta_2} = \Delta_0
  P^{xz}_{\beta_1* \beta_2} (I+X_2 \Sigma_2) = \Delta_0
  P^{xz}_{\beta_1}P^{xz}_{\beta_2} (I+X_2 \Sigma_2).
\end{equation}
There is a DGA homomorphism $\Phi: (\mathcal{A}_1, \partial_1)
\rightarrow (\mathcal{A}_2, \partial_2)$ that is the composition
\[\mathcal{A}_1 \rightarrow \mathcal{A}_1' :=
\mathcal{A}_1\big|_{\Delta_1 = I}/\langle X_1, \partial X_1 \rangle
\stackrel{\cong}{\rightarrow} \mathcal{A}_2.\] The first map
specializes the $t^1_i$ to $1$, and then projects to the quotient by
the $2$-sided ideal, $\langle X_1, \partial X_1 \rangle$, generated by
the entries of $X_1$ and their differentials.  [Note that $\langle
X_1, \partial X_1 \rangle$ is a differential ideal.]
The map from $\psi:\mathcal{A}_2 \rightarrow \mathcal{A}_1'$ that
takes generators of $\mathcal{A}_2$ to the equivalence class in
$\mathcal{A}_1'$ of the corresponding generator from $\mathcal{A}_1$
is an algebra isomorphism (because $\partial x^1_{i,j} = y^1_{i,j} +
...$), and $\psi^{-1}$ is the second map in the definition of $\Phi$.

\medskip

\noindent {\bf Claim: $\psi$ is in fact a DGA isomorphism.}  The
required formula $\partial_1' \circ \psi= \psi \circ \partial_2$ with
$\partial_1'$ the differential on $\mathcal{A}_1'$ induced by
$\partial_1$ is essentially Sivek's van Kampen Theorem from
\cite{Sivek}.  We outline the argument in our setting:

The disks that contribute to $\partial_1$ and $\partial_2$ are
identical except that (1) any disk for $\partial_2$ whose image
intersects a vertical line $\ell$ separating $\beta_1$ and $\beta_2$
do not appear in $\partial_1$, and (2) disks for $\partial_1$ with
punctures at any of the $x^1_{i,j}$ or $y^1_{i,j}$ do not appear in
$\partial_2$.  The disks of type (2) have one of the following forms:

\begin{enumerate}
\item[(2a)] Disks with a positive puncture at $x_{i,j}$.  Here, note
  that we can write
  \begin{equation} \label{eq:xyw}
    \partial_1 x^1_{i,j} = y_{i,j} + w_{i,j} + w_x 
  \end{equation}
  where $w_{i,j}$ (resp. $w_x$) is the contribution from disks without
  any (resp. with at least one) negative punctures at some other
  $x^1_{i,j}$. Note that the disks that produce $w_{i,j}$ are in
  bijection with ``bordered disks'' in the $\mathcal{A}_2$ diagram
  that in place of the positive puncture at $x_{i,j}$ have a right
  boundary segment mapped to the line segment $\ell_{i,j} \subset
  \ell$ that has endpoints on the $i$-th and $j$-th strands of the
  Legendrian.
\item[(2b)] Disks with at least one negative puncture at the
  $y_{i,j}$.  These are in bijection with ``bordered disks'' in
  $\mathcal{A}_2$ that have left boundary segments along the
  $\ell_{i,j}$ in place of the punctures at the $y_{i,j}$.
\end{enumerate}
In $\mathcal{A}'_1$, we have $[x_{i,j}] = [\partial x_{i,j}] = 0$, and
using (\ref{eq:xyw}) this gives that $[y_{i,j}] = [w_{i,j}]$.  Thus,
for a generator $a$ of $\mathcal{A}_2$,
$\psi^{-1}\circ\partial_1'\circ \psi(a)$ is computed from
$\partial_1a$ by replacing all occurrences of $x_{i,j}$ with $0$ and
$y_{i,j}$ with $w_{i,j}$.  Since the $xy$-diagram used for
$\mathcal{A}_2$ was formed using the resolution procedure and the dips
prevent disks from wrapping around the $S^1$ factor, it can be shown,
cf. \cite{Sivek}, that any disk of Type (1) is obtained by starting
with a bordered disk of Type (2b) and then gluing bordered disks of
Type (2a) to it along the vertical segments $\ell_{i,j}$ corresponding
to negative punctures at the $y_{i,j}$.  It follows that the above
procedure for computing $\psi^{-1}\circ \partial_1'\circ \psi(a)$
produces precisely $\partial_2(a)$ as required.

\medskip

With the claim established so that $\psi$, and hence $\Phi$, is a DGA
map, we return to Step 2.  Comparing (\ref{eq:100}) and (\ref{eq:101})
with the definition of $\Phi$ leads to
\begin{equation} \label{eq:beta1beta2} \Phi(P^{xy}_{\beta_1}
  P^{xy}_{\beta_2}) = P^{xy}_{\beta_1 * \beta_2}.
\end{equation}
In $(\mathcal{A}_1, \partial_1)$, we use the Leibniz rule (observing
that the $(i,j)$-entry of the path matrix $P^{xy}_{\beta_1}$ has
degree $\mu^0_{i} - \mu^1_j$) and the inductive hypothesis to compute
\begin{align*}
  \Sigma_0 \cdot \partial_1(P^{xy}_{\beta_1} P^{xy}_{\beta_2}) &= \Sigma_0\left[ (\partial_1P^{xy}_{\beta_1}) P^{xy}_{\beta_2} + (\Sigma_0 P^{xy}_{\beta_1} \Sigma_1)( \partial_1 P^{xy}_{\beta_2})\right] \\
  &= (\Sigma_0 \cdot \partial_1 P^{xy}_{\beta_1}) P^{xy}_{\beta_2} +P^{xy}_{\beta_1} ( \Sigma_1 \cdot \partial_1 P^{xy}_{\beta_2}) \\
  &= \left[P^{xy}_{\beta_1}(Y_1 \Sigma_1) - (Y_0 \Sigma_0) P^{xy}_{\beta_1} \right] \cdot P^{xy}_{\beta_2} + P^{xy}_{\beta_1} \cdot\left[ P^{xy}_{\beta_2} (Y_2 \Sigma_2) - (Y_1 \Sigma_1) P^{xy}_{\beta_2}\right] \\
  &= P^{xy}_{\beta_1}P^{xy}_{\beta_2} (Y_2 \Sigma_2) - (Y_0 \Sigma_0)
  P^{xy}_{\beta_1} P^{xy}_{\beta_2}.
\end{align*}
Applying $\Phi$ to the above calculation and using the identity
(\ref{eq:beta1beta2}), gives the required equality
\[
\Sigma_0\cdot\partial_2 P^{xy}_{\beta_1 * \beta_2} =
P^{xy}_{\beta_1*\beta_2} (Y_2 \Sigma_2) - (Y_0 \Sigma_0)
P^{xy}_{\beta_1*\beta_2}.
\]

\end{proof}

\subsection{Path matrices and augmentations} \label{sec:pathaug}

Consider a positive braid, $\beta \subset J^1S^1$, (with $xy$-diagram
obtained from resolution and basepoints $*_1, \ldots, *_n$ placed to
the left of the crossings of $\beta$ as in Section
\ref{sec:PathMatrix}).  Notice that the entries of the path matrix
$P^{xy}_{\beta}$ belong to the (unital) sub-algebra $\mathcal{B} \subset
\mathcal{A}(\beta)$ generated by $p_1, \ldots, p_s$; $x_{i,j}$ for $1
\leq i < j \leq n$; and $t_1^{\pm1}, \ldots, t_n^{\pm1}$,
i.e. $\mathcal{B}$ is the sub-algebra spanned by words not containing
any of the $y_{i,j}$.

As a result, for any (unital, associative) ring $R$, a ring
homomorphism
\[
\alpha: \mathcal{B} \rightarrow R
\]
produces an invertible $n \times n$-matrix with entries in $R$ by
applying $\alpha$ entry-by-entry to $P^{xy}_\beta$.
\begin{definition}\label{def:PathSub}
  Given a ring $R$, denote by $\mathit{Ring}(\mathcal{B}, R)$ the set
  of ring homomorphisms from $\mathcal{B}$ to $R$.  We say that
  $\beta$ is {\bf $R$-path injective} if the map
  \begin{equation} \label{eq:pathmap}
  \mathit{Ring}(\mathcal{B}, R) \rightarrow \mathit{Mat}(n,R), \quad
  \alpha \mapsto \alpha(P^{xy}_\beta)
  \end{equation}
  is injective.  We denote the image of this map as $B_\beta \subset
  \mathit{Mat}(n,R)$ and call it the {\bf path subset} of $\beta$.
\end{definition}

We will also use an $m$-graded version of the path subset.  The Maslov
potential on $\beta$ provides a $\Z$-grading on $\mathcal{B} \subset
\mathcal{A}(\beta)$, and we view $R$ as sitting in grading degree $0$.
For fixed $m \geq 0$, we define the {\bf $m$-graded path subset} of
$\beta$, $B^m_\beta$, to be the image under the map (\ref{eq:pathmap})
of the subset $\mathit{Ring}_m(\mathcal{B}, R) \subset
\mathit{Ring}(\mathcal{B},R)$ consisting of ring homomorphisms that
preserve grading mod
$m$.

\begin{definition} We say that $\beta$ is {\bf path generated}, if
  $\mathcal{B}$ is generated as a ring by $\Z$, the $t_i^{\pm1}$, and
  the entries of $P^{xy}_\beta$.
\end{definition}

\begin{remark}
  For Theorem \ref{thm:A} to hold, the path generated condition may be
  replaced with a seemingly weaker ``$R$-path generated'' where we
  require that the free product $R*\mathcal{B}$ is generated by
  $R$, $t_i^{\pm1}$, and $P^{xy}_\beta$.  However, we currently do not
  know of examples of braids that are $R$-path generated but not
  $\Z$-path generated.
\end{remark}

The following braids will give us examples of braids which are both
$R$-path injective and path generated.

\begin{definition}
  A {\bf positive permutation braid} is a positive braid where every
  pair of strands crosses at most once.
\end{definition}

Positive permutation braids are in one-to-one correspondence with
elements of the symmetric group $S_n$ (see \cite{Elrifai}). In
particular, if $\beta=\sigma_{i_1}\cdots\sigma_{i_r}$, where the
$\sigma_i$ are the standard generators of the braid group $B_n$, then
$\beta$ corresponds to the permutation with each $\sigma_i$ replaced
by the transposition of $i$ and $i+1$. Equivalently, the permutation
$\pi$ associated to a braid $\beta$ has $\pi(i) =j$ if the strand in
position $i$ at the \emph{right} side of $\beta$ is in position
$j$ when followed \emph{left} to the left side of $\beta$.  In
\cite{Garside}, positive permutation braids play an important role in
Garside's solution of the word and conjugacy problems in the braid
group $B_n$.

After a Legendrian isotopy, it is always possible to represent a
positive braid by a braid word where the product
$\sigma_i\sigma_{i+1}\sigma_i$ does not appear for any $i$ by
repeatedly replacing $\sigma_i\sigma_{i+1}\sigma_i$ with
$\sigma_{i+1}\sigma_i\sigma_{i+1}$, see
Figure~\ref{fig:triangleMove}. Such braid words are called {\bf
  reduced braid words}.  The braid in Figure~\ref{fig:BraidRes} is an
example of a positive permutation braid which is not reduced and
corresponds to the permutation
$(1\,2)(2\,3)(1\,2)(3\,4)=(1\,3\,4)$. The reduced braid word for the
braid in Figure~\ref{fig:BraidRes} is
$\sigma_2\sigma_1\sigma_2\sigma_3$. The basic fronts $A_m$, $m\geq 1$
(as in Figure \ref{fig:Basic}) are examples of reduced positive
permutation braids.

\begin{figure}[t]
  \includegraphics[width=3in]{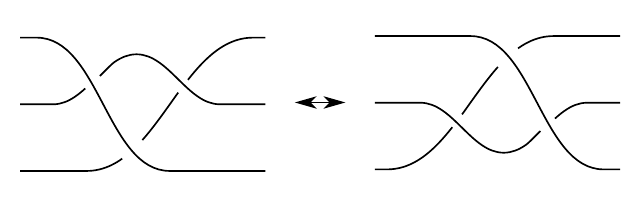}
  \caption{Legendrian isotopic braids. The right is a reduced braid
    word.}
  \label{fig:triangleMove}
\end{figure}

Restated in terms of our notation and with signs added, the following
proposition from K\'alm\'an gives us the form of the $xz$-path matrix
for a reduced permutation braid word for a positive permutation braid.
We use the notation $P_\pi = \sum_{i =1}^n E_{\pi(i),i}$ for the
permutation matrix associated $\pi \in S_n$.

\begin{proposition}[\cite{Kalman} Proposition~3.5] \label{prop:kalman} Let
  $\beta$ be a reduced positive permutation braid and let $\pi\in S_n$
  be the associated permutation. The path matrix $P^{xz}_\beta$ is
  obtained from the permutation matrix $P_\pi$ as follows: Changes are
  only made to entries that are above the $1$ in their column and to
  the left of the $1$ in their row. At each such position, a single
  crossing label multiplied by $(-1)^{\mu_k}$, where $\mu_k$ is the
  value of the Maslov potential on the lower strand incident to the
  crossing, appears in $P^{xz}_\beta$.
\end{proposition}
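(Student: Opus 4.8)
The plan is to compute $P^{xz}_\beta$ as a product of single-crossing matrices and then isolate which terms of that product survive, using the geometry of reduced positive permutation braids to control cancellation. By Proposition~\ref{prop:PathProp}(1), if $\beta = \sigma_{i_1}\cdots\sigma_{i_r}$ then $P^{xz}_\beta = V_{i_1}\cdots V_{i_r}$, where $V_k = P^{xz}_{\sigma_k}$ is the identity matrix with the block $\bigl(\begin{smallmatrix} (-1)^{\mu_k} p & 1 \\ 1 & 0 \end{smallmatrix}\bigr)$ in rows and columns $k,k+1$, as computed in Step~1 of the proof of Proposition~\ref{prop:Pxy1}. Here $(-1)^{\mu_k}$ records the Maslov parity of the lower strand at the crossing; since a positive braid has no cusps, the parity of $\mu$ is constant along each strand, so this sign is unambiguous.

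Writing $V_k = P_{(k,k+1)} + (-1)^{\mu_k} p_k\,E_{k,k}$, with $P_{(k,k+1)}$ the (entrywise nonnegative) transposition matrix and $E_{k,k}$ the corresponding diagonal idempotent, I would expand $P^{xz}_\beta = \prod_{l=1}^r\bigl(P_{\tau_l} + (-1)^{\mu_{i_l}} p_l\,E_{i_l,i_l}\bigr)$ over subsets $S\subseteq\{1,\dots,r\}$ recording where the idempotent factor is taken. The empty subset contributes $P_{\tau_1}\cdots P_{\tau_r}=P_\pi$, the permutation-matrix skeleton. Each intermediate product of transposition matrices is a genuine $\{0,1\}$ permutation matrix, and $E_{a,a}P_w E_{b,b}=(P_w)_{a,b}E_{a,b}$, so every singleton term $|S|=\{l\}$ telescopes to a single crossing label at one entry $E_{a,b}$, carrying the sign $(-1)^{\mu_{i_l}}$ unchanged (permutation matrices introduce no signs over $\Z$). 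A short bookkeeping identifies this entry as row $a=\ell_1$ (the upper strand at the crossing) and column $b=\pi^{-1}(\ell_2)$, which satisfies $a<\pi(b)$ and $b<\pi^{-1}(a)$—that is, an inversion position, lying strictly above the pivot $1$ in its column and strictly to the left of the pivot $1$ in its row. The $r$ singleton terms occupy $r$ \emph{distinct} such positions, since distinct crossings involve distinct pairs of strands in a positive permutation braid; thus no entry collects two labels.

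It therefore remains to prove that every term with $|S|\geq 2$ vanishes, equivalently that no $xz$-section with two or more downward negative corners contributes. For a pair $l_1<l_2$ this term is nonzero exactly when the intermediate transposition product $P_w$ ($w=\tau_{l_1+1}\cdots\tau_{l_2-1}$) sends $i_{l_2}$ to $i_{l_1}$, i.e.\ connects the two crossing positions. This is the combinatorial core of \cite{Kalman}, Proposition~3.5: such a connection is precisely what a $\sigma_i\sigma_{i+1}\sigma_i$ subword would create, and it is excluded once the braid word is reduced. (Example~\ref{ex:nonreduced} illustrates this failure: the non-reduced word $\sigma_1\sigma_2\sigma_1\sigma_3$—which is even Coxeter-reduced—produces the product $p_1p_3$.) Since the set of contributing sections and their corner combinatorics do not depend on the coefficient ring, K\'alm\'an's $\Z/2$ analysis determines $P^{xz}_\beta$ over $\Z$ up to the signs of its individual nonzero entries.

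Finally, for the signs I would evaluate $\iota(\gamma)$ for the unique single-corner section realizing each inversion position. As $\gamma$ has exactly one corner, $\iota(\gamma)$ equals the orientation sign of that corner, covering the unique negative Reeb quadrant determined at the crossing; a local check using the conventions of Figure~\ref{fig:Reeb} and the compatibility of Maslov parity with orientation shows this sign is $(-1)^{\mu_k}$ with $\mu_k$ the Maslov potential on the lower strand—matching the single-crossing block in $V_k$ and confirming the claimed form. The main obstacle is the vanishing of the higher terms, i.e.\ the exclusion of multi-corner sections: this is exactly where the \emph{reduced} hypothesis is essential (mere Coxeter-reducedness does not suffice) and is the substantive input supplied by K\'alm\'an; everything else reduces to multiplicativity, the single-crossing base case, and a routine local sign verification.
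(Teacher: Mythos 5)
The paper itself offers no proof of this statement: it is imported verbatim from K\'alm\'an (restated ``with signs added''), so there is no internal argument to compare yours against. Your reconstruction is nevertheless sound in its scaffolding. The factorization $P^{xz}_\beta=V_{i_1}\cdots V_{i_r}$ does follow from Proposition~\ref{prop:PathProp}(1) together with the single-crossing computation in Step~1 of the proof of Proposition~\ref{prop:Pxy1}; the expansion over subsets $S$ is valid; the empty set gives $P_\pi$; the identity $E_{a,a}P_wE_{b,b}=(P_w)_{a,b}E_{a,b}$ correctly collapses each singleton term to a single signed crossing label; and your identification of that entry as an inversion position (above the pivot in its column, to the left of the pivot in its row), together with the fact that the $r$ crossings occupy $r$ distinct such positions and that $r$ equals the number of inversions of $\pi$, accounts for the full statement modulo the vanishing of the $|S|\geq 2$ terms and a routine local sign check, which you handle correctly.

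The one step you should treat with more care is the vanishing of the multi-corner terms, which you defer to K\'alm\'an but also try to motivate by saying that a surviving two-corner term is ``precisely what a $\sigma_i\sigma_{i+1}\sigma_i$ subword would create.'' Read literally (as a consecutive subword), this is not accurate: the word $\sigma_1\sigma_3\sigma_2\sigma_1\sigma_3$ is a positive permutation braid word containing no consecutive $\sigma_i\sigma_{i+1}\sigma_i$, yet its $(1,1)$-entry acquires the two-corner contribution $\pm p_1p_4$ (the section turns at the first crossing, rides position $1$ past the $\sigma_3$ and $\sigma_2$ crossings, and turns again at the fourth crossing); the offending pattern only becomes visible after the far commutation $\sigma_1\sigma_3=\sigma_3\sigma_1$. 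So the hypothesis ``reduced'' must be understood up to commutations of distant generators (equivalently, one must use the normal form K\'alm\'an actually works with), and the exclusion of multi-corner sections is a genuine combinatorial argument rather than a one-line consequence of the absence of a literal $\sigma_i\sigma_{i+1}\sigma_i$. Since you cite K\'alm\'an for exactly this step --- just as the paper cites him for the whole proposition --- this is not a fatal gap, but the reason you give for why the step holds is not quite right, and that step is the only substantive content of the proposition.
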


In other words, there is exactly one $1$ in each row and column of
$P^{xz}_\beta$ and if an entry in $P^{xz}_\beta$ is $1$, then all
entries either below or to the right of it are zero and all entries
either above or to the left of it are either $\pm p_k$ for some $k$ or
zero. The positions that carry different entries in $P^{xz}_\beta$
and $P_\pi$ are in one-to-one correspondence with the crossings of
$\beta$.

\begin{example}
  Let $\beta$ be the braid in Figure~\ref{fig:BraidRes} and let
  $\beta'=\sigma_2\sigma_1\sigma_2\sigma_3$ be the reduced braid word
  for $\beta$. If the crossings of $\beta$ are labeled from left to
  right by $p_1,\ldots,p_4$ and the crossings of $\beta'$ by
  $q_1,\ldots,q_4$, then the path matrix of $\beta$, $P^{xy}_{\beta}$,
  is as in Example \ref{ex:nonreduced}, while in the reduced case
  \[P^{xz}_{\beta'}=\begin{pmatrix}
      (-1)^{\mu_3}q_2&(-1)^{\mu_2}q_3&(-1)^{\mu_4}q_4&1\\
      (-1)^{\mu_3}q_1&1&0&0\\
      1&0&0&0\\
      0&0&1&0
    \end{pmatrix}.\]
\end{example}

\medskip

\begin{proposition} \label{prop:permutation} If $\beta$ is a reduced
  positive permutation braid and $R$ is a unital ring, then $\beta$ is
  $R$-path injective and path generated.
\end{proposition}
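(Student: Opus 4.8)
The plan is to reduce the statement to the explicit shape of the $xy$-path matrix and then recover every generator of $\mathcal{B}$ from the entries of $P^{xy}_\beta$ by a triangular elimination. First I would combine Proposition~\ref{prop:xypath} with K\'alm\'an's description (Proposition~\ref{prop:kalman}): writing $P^{xz}_\beta=(I+N)P_\pi$, where $N$ is \emph{strictly upper triangular} with entries $\pm p_k$ supported exactly on the crossing positions allowed by Proposition~\ref{prop:kalman}, gives the factorization
\[
M:=P^{xy}_\beta=\Delta\,(I+N)\,P_\pi\,(I+X\Sigma).
\]
Here $\Delta(I+N)$ is invertible upper triangular (its diagonal is $\Delta$, carrying the $t_i^{(-1)^{\mu_i}}$) while $I+X\Sigma$ is unipotent upper triangular (carrying all the $x_{i,j}$). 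Thus $M$ sits in the Bruhat stratum of $\pi$, and the two assertions become: the generators $t_i,p_k,x_{i,j}$ are (i) determined by $M$ together with the combinatorial data of $\pi$, and (ii) expressible as ring words in the entries of $M$ and the $t_i^{\pm1}$.

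Next I would carry out the recovery row by row, from the bottom row $i=n$ upward. Writing $c=\pi^{-1}(i)$ for the pivot column of row $i$ and multiplying out the factorization, each entry takes the form $M_{ij}=t_i^{(-1)^{\mu_i}}w_{ij}$, where $w_{ij}$ is $1$ plus $p$-corrections at the pivot $j=c$; a leading $\pm p_k$ plus lower $p$-corrections at the crossing columns $j<c$ with $\pi(j)>i$; and a leading $x_{c,j}$ plus $p$-corrections at the columns $j>c$ (the remaining entries give only consistency relations). The essential point, guaranteed by the ``above-the-pivot-and-left-of-the-pivot'' support condition in Proposition~\ref{prop:kalman}, is that every $x$ appearing in these corrections has first index $b$ with $\pi(b)>i$, hence was already recovered in a lower row. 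Ordering the crossing columns of row $i$ from left to right makes the $p$-corrections strictly triangular, so the products $t_i^{(-1)^{\mu_i}}p_k$ unfold (terminating at the leftmost crossing, whose correction is empty) into words in the row-$i$ entries and previously recovered $x$'s, with no residual $t_i$. Substituting these into the pivot entry isolates $t_i^{(-1)^{\mu_i}}$ as such a word; since this is the invertible element $t_i^{(-1)^{\mu_i}}$, I may then multiply by its inverse to extract each $p_k$ and finally each $x_{c,j}$.

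This recovery proves both claims at once. For path-generation, the recovery formulas express every $p_k$ and $x_{i,j}$ as a ring expression in the entries of $M$ and the (freely available) $t_i^{\pm1}$. For $R$-path injectivity, running the identical elimination on $\alpha(M)$ computes $\alpha(t_i),\alpha(p_k),\alpha(x_{i,j})$ in turn; at each stage the already-recovered $\alpha(t_i)$ is a unit in $R$ (being the image of the invertible $t_i$), so the leading coefficients can be inverted and the elimination goes through over an arbitrary unital ring, whence $\alpha$ is determined by $\alpha(M)$. The main obstacle is the bookkeeping of the elimination order: one must verify that at row $i$ the only unknowns are $t_i$, the row-$i$ crossing labels, and the $x_{c,j}$ with $j>c$, and that all correction terms involve only these together with data from strictly lower rows. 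This is precisely where K\'alm\'an's support description and the simultaneous upper-triangularity of $\Delta(I+N)$ and $I+X\Sigma$ are indispensable; once the order is justified, the remaining steps are the routine triangular solves.
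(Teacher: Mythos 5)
Your proof is correct, and it rests on the same two pillars as the paper's: Proposition~\ref{prop:xypath} to reduce everything to $\Delta\cdot P^{xz}_\beta\cdot(I+X\Sigma)$, and K\'alm\'an's support description (Proposition~\ref{prop:kalman}) to make the resulting system triangular. The genuine difference is the elimination order. The paper sweeps column-by-column from the left: at stage $k$ it recovers the $k$-th column of $X$ and the $p$'s in column $k$ of $P^{xz}_\beta$, but it only learns $t_{\pi(k)}$ upon reaching the pivot of column $k$; as a result its injectivity argument must carry the ``twisted'' identities $\alpha(P_{i,j})=\alpha\bigl(t_i^{(-1)^{\mu_i+1}}\bigr)\gamma\bigl(t_i^{(-1)^{\mu_i}}P_{i,j}\bigr)$ through the entire induction and only untwists them at the very end, after all of $\pi$ has been exhausted. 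Your bottom-up row sweep resolves $t_i$ inside its own row (the pivot entry of row $i$ is met at stage $i$), so each stage closes completely before the next begins and no twisted bookkeeping is needed; the price is the observation—which you correctly identify as the crux—that every correction term $x_{\ell,j}$ appearing in row $i$ has $\pi(\ell)>i$ and hence was already produced by a strictly lower row. Your explicit factorization $P^{xy}_\beta=\Delta(I+N)P_\pi(I+X\Sigma)$ is not invoked in the paper's proof of this proposition, but it makes the connection with the Bruhat decomposition of Proposition~\ref{prop:bruhat} transparent. One small point to make airtight in the injectivity step: the inverse $\alpha\bigl(t_i^{(-1)^{\mu_i}}\bigr)^{-1}$ is determined by $\alpha(P^{xy}_\beta)$ because inverses in a unital ring are unique, and it equals $\alpha\bigl(t_i^{(-1)^{\mu_i+1}}\bigr)$, which is exactly what you need to conclude that $\alpha$ and $\gamma$ agree on both $t_i$ and $t_i^{-1}$ before dividing out to extract the $p_k$ and $x_{c,j}$.
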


\begin{proof}
  Let $P^{xz}_\beta=(P_{i,j})_{1\leq i,j\leq n}$. By
  Proposition~\ref{prop:kalman}, for each $k$ there exist unique $i,j$
  such that $P_{ij}=\pm p_k$. Relabel the $p_k$ by their position in
  $P^{xz}_\beta$: let $p^j_i$ be the $p_k$ in the $i$-th row and
  $j$-th column of $P^{xz}_\beta$ (for those positions $(i,j)$ where
  the entry is $\pm p_k$ rather than $0$ or $1$). We then have that
  \[P_{i,j}=\begin{cases}
    1&\text{if }\pi(j)=i,\\
    0&\text{if }\pi^{-1}(i)>j,\\
    0&\text{if }i>\pi(j), \\
    \pm p^j_i&\text{else.}
  \end{cases}\]
  This is because the entry $P_{i,j}$ appears above (resp. to the left of) the $1$ in the $j$-th column (resp. $i$-th row) of $P_\pi$ if and only if $i>\pi(j)$ (resp. $\pi^{-1}(i)>j$).

  \medskip\noindent{\bf (path generated)} Recalling that
  $P^{xy}_\beta=\Delta\cdot P^{xz}_\beta\cdot(I+X\Sigma)$, we note
  that the entries of $P^{xy}_\beta$ are just entries of $P^{xz}_\beta
  \cdot (I+X \Sigma)$ multiplied on the left by some $t_i^{\pm1}$.
  Therefore, to show that $\beta$ is path generated it suffices to
  show that $\mathcal{B}$ is generated as a ring by $\Z$,
  $t_i^{\pm1}$, and the entries of $P^{xz}_\beta \cdot (I+X \Sigma)$.
	
  Let $\mathcal{B}_k \subset \mathcal{B}$ denote the subring generated
  by $\Z$, the $t_i^{\pm1}$, and entries from the first $k$ columns of
  $P^{xz}_\beta(I+X\Sigma)$. We will prove by induction on $k$ that
  $p_i^j, x_{i,j} \in \mathcal{B}_k$ whenever $j \leq k$.  In
  particular, when $n=k$, we get that $\mathcal{B}_n = \mathcal{B}$ as
  required.

  \medskip\noindent(Induction) Let $k\geq 1$, and suppose the
  statement is known for smaller values of $k$.  First, we check that
  the $x_{i,k} \in \mathcal{B}_k$, for all $i<k$.  To this end, recall
  that $P_{\pi(i),i}=1$ for all $i$ and $P_{\pi(i),j}=0$ for all
  $i<j$. Thus, if $i<k$, then the $(\pi(i),k)$-entry of $P^{xz}_\beta
  \cdot (I+X\Sigma)$ is
  \begin{align*}
    (P^{xz}_\beta \cdot (I+X\Sigma))_{(\pi(i),k)}&=\sum_{\ell=1}^{k-1}(-1)^{\mu_{k}}P_{\pi(i),\ell}x_{\ell,k}+P_{\pi(i),k} \\
    &=\sum_{\ell=1}^{i-1}(-1)^{\mu_{k}}P_{\pi(i),\ell}x_{\ell,k}+x_{i,k}.
  \end{align*}
  The inductive hypothesis gives that the $P_{\pi(i),\ell}$ are in
  $\mathcal{B}_k$; arguing inductively in $i$, we conclude that
  $x_{i,k} \in \mathcal{B}_k$ for all $i<k$.
	
  For generators of the form $p_i^k$, there exists $\mu\in\Z$ such
  that $P_{i,k}=(-1)^\mu p^{k}_i$. We see that
  \begin{align*}
    (P^{xz}_\beta\cdot (I+X\Sigma))_{(i,k)}&= \sum_{\ell=1}^{k-1}(-1)^{\mu_{k}}P_{i,\ell}x_{\ell,k}+P_{i,k}\\
    &=\sum_{\ell=1}^{k-1}(-1)^{\mu_{k}}P_{i,\ell}x_{\ell,k}+(-1)^\mu
    p^{k}_i.
  \end{align*}
  Since $P_{i,\ell}, x_{\ell,k} \in \mathcal{B}_k$ for all $\ell < k$,
  we get that $p^k_i \in \mathcal{B}_k$.

  \medskip\noindent{\bf ($R$-path injective)} Let
  $\alpha,\gamma\in\mathit{Ring}(\mathcal{B},R)$ such that
  $\alpha(P^{xy}_\beta)=\gamma(P^{xy}_\beta)$. To show that
  $\alpha(p_i)=\gamma(p_i)$, $\alpha(x_{i,j})=\gamma(x_{i,j})$, and
  $\alpha(t_i)=\gamma(t_i)$ for all $i$ and $j$, we will prove that
  $\alpha(x_{i,j})=\gamma(x_{i,j})$ for all $i<j$,
  $\alpha(P_{i,j})=\alpha\left(t_i^{(-1)^{\mu_i+1}}\right)\gamma\left(t_i^{(-1)^{\mu_i}}P_{i,j}\right)$
  for all $i$ and $j$, and $\alpha(t_{\pi(i)})=\gamma(t_{\pi(i)})$ for
  all $i$.

  Looking at the first column of $P^{xy}_\beta$ tells us
  \[\alpha\left(t_i^{(-1)^{\mu_i}}P_{i,1}\right)=\gamma\left(t_i^{(-1)^{\mu_i}}P_{i,1}\right)\]
  and so
  \[\alpha(P_{i,1})=\alpha\left(t_i^{(-1)^{\mu_i+1}}\right)\gamma\left(t_i^{(-1)^{\mu_i}}P_{i,1}\right)\]
  as desired. In particular, if $i=\pi(1)$, then we have
  \[1=\alpha(P_{\pi(1),1})=\alpha\left(t_{\pi(1)}^{(-1)^{\mu_{\pi(1)}+1}}\right)\gamma\left(t_{\pi(1)}^{(-1)^{\mu_{\pi(1)}}}P_{\pi(1),1}\right)=\alpha\left(t_{\pi(1)}^{(-1)^{\mu_{\pi(1)}+1}}\right)\gamma\left(t_{\pi(1)}^{(-1)^{\mu_{\pi(1)}}}\right)
  \]
  since $P_{\pi(1),1}=1$. Thus
  $\alpha(t_{\pi(1)})=\gamma(t_{\pi(1)})$.

  Now induct on $k$, the column of $P^{xy}_\beta$. Assume that
  \begin{itemize}
  \item $\alpha(x_{i,j})=\gamma(x_{i,j})$ for $i<j<k$,
  \item
    $\alpha(P_{i,j})=\alpha\left(t_i^{(-1)^{\mu_i+1}}\right)\gamma\left(t_i^{(-1)^{\mu_i}}P_{i,j}\right)$
    for $j<k$, and
  \item $\alpha(t_{\pi(j)})=\gamma(t_{\pi(j)})$ for $j<k$.
  \end{itemize}
  We now establish these three identities in the case that $j=k$.
  
  To obtain the first identity, we will prove by induction on $j$ that
  $\alpha(x_{j,k})=\gamma(x_{j,k})$ for all $j<k$. Looking at the
  $k$-th column of $P^{xy}_\beta$, we see that
  \[\alpha\left(t_i^{(-1)^{\mu_i}}\left[\sum_{\ell=1}^{k-1}(-1)^{\mu_k}P_{i,\ell}x_{\ell,k}+P_{i,k}\right]\right)=\gamma\left(t_i^{(-1)^{\mu_i}}\left[\sum_{\ell=1}^{k-1}(-1)^{\mu_k}P_{i,\ell}x_{\ell,k}+P_{i,k}\right]\right)\]
  for all $i$.

  \medskip\noindent ($j=1$) By setting $i=\pi(j)=\pi(1)$, we see that
  \begin{align*}
    \alpha\left(t_{\pi(1)}^{(-1)^{\mu_{\pi(1)}}}\left[\sum_{\ell=1}^{k-1}(-1)^{\mu_k}P_{i,\ell}x_{\ell,k}+P_{i,k}\right]\right)&=\gamma\left(t_{\pi(1)}^{(-1)^{\mu_{\pi(1)}}}\right)\left[(-1)^{\mu_k}\alpha(P_{\pi(1),1}x_{1,k})\right]\\
    &=\gamma\left(t_{\pi(1)}^{(-1)^{\mu_{\pi(1)}}}\right)(-1)^{\mu_k}\alpha(x_{1,k}),
  \end{align*}
  since $P_{\pi(1),\ell}=0$ if $\ell>1$. Similarly we see that
  \[\gamma\left(t_{\pi(1)}^{(-1)^{\mu_{\pi(1)}}}\left[\sum_{\ell=1}^{k-1}(-1)^{\mu_k}P_{i,\ell}x_{\ell,k}+P_{i,k}\right]\right)=\gamma\left(t_{\pi(1)}^{(-1)^{\mu_{\pi(1)}}}(-1)^{\mu_k}x_{1,k}\right).\]
  Thus $\alpha(x_{1,k})=\gamma(x_{1,k})$.

  \medskip\noindent (Induction) Now suppose for $\ell<j$,
  $\alpha(x_{\ell,k})=\gamma(x_{\ell,k})$. Setting $i=\pi(j)$, we have
  \begin{align*}
    \alpha&\left(t_{\pi(j)}^{(-1)^{\mu_{\pi(j)}}}\left[\sum_{\ell=1}^{k-1}(-1)^{\mu_k}P_{\pi(j),\ell}x_{\ell,k}+P_{\pi(j),k}\right]\right)\\
    &=\gamma\left(t_{\pi(j)}^{(-1)^{\mu_{\pi(j)}}}\right)\left[\sum_{\ell=1}^{j-1}(-1)^{\mu_k}\alpha(P_{\pi(j),\ell}x_{\ell,k})+(-1)^{\mu_k}\alpha(P_{\pi(j),j}x_{j,k})\right]\text{ since }P_{\pi(j),\ell}=0\text{ if }\ell>j\\
    &=\gamma\left(t_{\pi(j)}^{(-1)^{\mu_{\pi(j)}}}\right)\left[\sum_{\ell=1}^{j-1}(-1)^{\mu_k}\alpha\left(t_{\pi(j)}^{(-1)^{\mu_{\pi(j)}+1}}\right)\gamma\left(t_{\pi(j)}^{(-1)^{\mu_{\pi(j)}}}P_{\pi(j),\ell}\right)\gamma(x_{\ell,k})+(-1)^{\mu_k}\alpha(x_{j,k})\right]\\
    &=\gamma\left(t_{\pi(j)}^{(-1)^{\mu_{\pi(j)}}}\right)\left[\sum_{\ell=1}^{j-1}\gamma\left(P_{\pi(j),\ell}x_{\ell,k}\right)+(-1)^{\mu_k}\alpha(x_{j,k})\right],
  \end{align*}
  since $\alpha(t_{\pi(j)})=\gamma(t_{\pi(j)})$ and
  $\alpha(x_{\ell,k})=\gamma(x_{\ell,k})$. Similarly,
  \[\gamma\left(t_{\pi(j)}^{(-1)^{\mu_{\pi(j)}}}\left[\sum_{\ell=1}^{k-1}(-1)^{\mu_k}P_{i,\ell}x_{\ell,k}+P_{i,k}\right]\right)=\gamma\left(t_{\pi(j)}^{(-1)^{\mu_{\pi(j)}}}\left[\sum_{\ell=1}^{j-1}(-1)^{\mu_k}P_{\pi(j),\ell}x_{\ell,k}+(-1)^{\mu_k}x_{j,k}\right]\right).\]
  Therefore $\alpha(x_{j,k})=\gamma(x_{j,k})$ as desired.

  \medskip

  We turn now towards establishing the 2nd and 3rd bullet points of
  the main induction.  Note that the 2nd equation involving $P_{i,k}$
  holds when $i = \pi(j)$ for some $j <k$ since in this case
  $P_{i,k}=0$.  When $i = \pi(j)$ for some $k \leq j$, we have
  \begin{align*}\gamma\left(t_i^{(-1)^{\mu_i}}\left[\sum_{\ell=1}^{k-1}(-1)^{\mu_k}P_{i,\ell}x_{\ell,k}+P_{i,k}\right]\right)&=\alpha\left(t_i^{(-1)^{\mu_i}}\left[\sum_{\ell=1}^{k-1}(-1)^{\mu_k}P_{i,\ell}x_{\ell,k}+P_{i,k}\right]\right)\\
    &=\alpha\left(t_i^{(-1)^{\mu_i}}\right)\left[\sum_{\ell=1}^{k-1}(-1)^{\mu_k}\alpha\left(t_i^{(-1)^{\mu_i+1}}\right)\gamma\left(t_i^{(-1)^{\mu_i}}P_{i,\ell}x_{\ell,k}\right)+\alpha(P_{i,k})\right]\\
    &=\sum_{\ell=1}^{k-1}(-1)^{\mu_k}\gamma\left(t_i^{(-1)^{\mu_i}}P_{i,\ell}x_{\ell,k}\right)+\alpha\left(t_i^{(-1)^{\mu_i}}P_{i,k}\right).
  \end{align*}
  So
  \begin{align*}
    \alpha(P_{i,k})&=\alpha\left(t_i^{(-1)^{\mu_i+1}}\right)\left[\gamma\left(t_i^{(-1)^{\mu_i}}\left[\sum_{\ell=1}^{k-1}(-1)^{\mu_k}P_{i,\ell}x_{\ell,k}+P_{i,k}\right]\right)-\sum_{\ell=1}^{k-1}(-1)^{\mu_k}\gamma\left(t_i^{(-1)^{\mu_i}}P_{i,\ell}x_{\ell,k}\right)\right]\\
    &=\alpha\left(t_i^{(-1)^{\mu_i+1}}\right)\gamma\left(t_i^{(-1)^{\mu_i}}P_{i,k}\right).
  \end{align*}
  If $i=\pi(k)$, then
  \[1=\alpha\left(t_{\pi(k)}^{(-1)^{\mu_{\pi(k)+1}}}\right)\gamma\left(t_{\pi(k)}^{(-1)^{\mu_{\pi(k)}}}\right)\]
  so $\alpha(t_{\pi(k)})=\gamma(t_{\pi(k)})$.

  So we have shown that
  \begin{itemize}
  \item $\alpha(x_{i,j})=\gamma(x_{i,j})$ for $i<j$,
  \item
    $\alpha(P_{i,j})=\alpha(t_i^{(-1)^{\mu_i+1}})\gamma(t_i^{(-1)^{\mu_i}}P_{i,j})$
    for all $i,j$, and
  \item $\alpha(t_{\pi(i)})=\gamma(t_{\pi(i)})$ for all $i$.
  \end{itemize}
  But $\pi\in S_n$, so $\alpha(t_{\pi(i)})=\gamma(t_{\pi(i)})$ for all
  $i$ if and only if $\alpha(t_i)=\gamma(t_i)$ for all $i$. Thus
  $\alpha(P_{i,j})=\gamma(P_{i,j})$ for all $i,j$. So
  \begin{itemize}
  \item $\alpha(x_{i,j})=\gamma(x_{i,j})$ for all $i<j$,
  \item $\alpha(P_{i,j})=\gamma(P_{i,j})$ for all $i,j$, and
  \item $\alpha(t_i)=\gamma(t_i)$ for all $i$.
  \end{itemize}
  By Proposition~\ref{prop:kalman}, we know for each $\ell$ there
  exist $i,j$ and $\mu\in\Z$ such that $P_{i,j}=(-1)^\mu p_\ell$, so
  $\alpha(p_\ell)=\gamma(p_\ell)$ for all $\ell$ and we are done.
\end{proof}

\subsection{A decomposition of $\mathit{GL}(n,\mathbb{F})$ from path
  matrices}

We conclude this section by establishing the following decomposition
of $\mathit{GL}(n,\mathbb{F})$ into path subsets (as in Definition
\ref{def:PathSub}) of positive permutation braids.

\begin{proposition} \label{prop:bruhat} For each permutation in $S_n$,
  fix a positive permutation braid, $\beta$, with reduced braid
  word. For any field, $\mathbb{F}$, we have
  \[
  \mathit{GL}(n,\mathbb{F}) = \bigsqcup_{\beta \in S_n} B_\beta.
  \]
\end{proposition}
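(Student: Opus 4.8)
The plan is to identify each path subset $B_\beta$ with a Bruhat cell of $\mathit{GL}(n,\mathbb{F})$ and then invoke the Bruhat decomposition. First I would make the description of $B_\beta$ explicit. Since $P^{xy}_\beta$ is invertible with inverse $Q^{xy}_\beta$ (Proposition~\ref{prop:PathProp}(2)), applying any ring homomorphism $\alpha$ entry-by-entry produces an invertible matrix, so $B_\beta \subseteq \mathit{GL}(n,\mathbb{F})$ a priori. By Proposition~\ref{prop:xypath} we may write $\alpha(P^{xy}_\beta) = D\,C\,U$, where $D = \alpha(\Delta)$ is an arbitrary invertible diagonal matrix (as the $\alpha(t_i)$ range over $\mathbb{F}^{*}$), where $U = \alpha(I + X\Sigma)$ is an arbitrary unipotent upper-triangular matrix (as the $\alpha(x_{i,j})$ range over $\mathbb{F}$), and where $C = \alpha(P^{xz}_\beta)$. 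By Proposition~\ref{prop:kalman}, $C$ equals the permutation matrix $P_\pi$ with each \emph{free} entry---those positions $(a,b)$ lying above the pivot in their column and to the left of the pivot in their row, i.e. $a < \pi(b)$ and $b < \pi^{-1}(a)$---replaced by an arbitrary element of $\mathbb{F}$. Writing $B = T U^{+}$ for the Borel subgroup of invertible upper-triangular matrices (torus times unipotent radical), the goal is the identity $B_\beta = B\, P_\pi\, B$.

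Next I would establish the containment $B_\beta \subseteq B P_\pi B$. Factoring $C = P_\pi + N$ with $N$ supported on the free positions, a direct computation gives $u := C P_\pi^{-1} = I + N P_\pi^{-1}$, and since $(N P_\pi^{-1})_{r,s} = N_{r,\pi^{-1}(s)}$ is nonzero only when $r < s$ and $\pi^{-1}(r) > \pi^{-1}(s)$, the factor $u$ is unipotent upper-triangular with support exactly the inversion set of $\pi^{-1}$; equivalently $u \in U_\pi := U^{+} \cap P_\pi\, U^{-}\, P_\pi^{-1}$. Thus $C = u P_\pi$, and rewriting $DCU = (D u D^{-1})(D P_\pi) U = u'\, P_\pi\, (P_\pi^{-1} D P_\pi)\, U = u'\, P_\pi\, b$ with $u' = D u D^{-1} \in U_\pi$ (diagonal conjugation preserves support) and $b = (P_\pi^{-1} D P_\pi)\, U \in B$, shows $B_\beta \subseteq B P_\pi B$.

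For the reverse containment I would use the refined Bruhat normal form: every element of $B P_\pi B$ is written uniquely as $u'\, P_\pi\, b$ with $u' \in U_\pi$ and $b \in B$, and the support of $U_\pi$ is the inversion set of $\pi^{-1}$, of cardinality $\ell(\pi)$, which is exactly the number of crossings of the reduced word for $\beta$ (the number of free positions). Given such $u'$ and $b$, splitting $b = \widehat{D}\,\widehat{U}$ into its diagonal and unipotent parts and setting $D = P_\pi \widehat{D} P_\pi^{-1}$, $U = \widehat{U}$, $u = D^{-1} u' D$, and $N = (u - I) P_\pi$ recovers admissible parameters $(D, N, U)$ (the bijection between the support of $u-I$ and the free positions guarantees $N$ is supported correctly), so $u' P_\pi b \in B_\beta$. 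Hence $B_\beta = B P_\pi B$. Because the chosen reduced positive permutation braids are indexed by $\pi \in S_n$, the family $\{B_\beta\}$ is precisely the family of Bruhat cells $\{B P_\pi B\}_{\pi \in S_n}$, and the claimed disjoint union is exactly the Bruhat decomposition $\mathit{GL}(n,\mathbb{F}) = \bigsqcup_{\pi \in S_n} B P_\pi B$.

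The main obstacle is the bookkeeping in the two containments: verifying that the combinatorial ``above-and-left'' description of the free positions from Proposition~\ref{prop:kalman} matches precisely the support of $U_\pi = U^{+} \cap P_\pi U^{-} P_\pi^{-1}$, and confirming that the parameter count agrees so that one obtains surjectivity onto the whole double coset rather than mere containment. The sign data carried by $\Sigma$ and the factors $(-1)^{\mu_k}$ must be tracked, but they are harmless: they only rescale the free entries and are absorbed into the arbitrary choices of $D$, $N$, and $U$. Once $B_\beta = B P_\pi B$ is in hand, both disjointness and coverage follow immediately from the classical Bruhat decomposition.
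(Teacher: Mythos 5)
Your proof is correct, but it takes a genuinely different route from the paper's. The paper gives a self-contained, elementary argument: it describes $B_\beta$ as the set of products $D S_\pi U$ (exactly as you do, via Proposition~\ref{prop:xypath} and Proposition~\ref{prop:kalman}), proves coverage by a Gaussian-elimination procedure that reduces an arbitrary $A \in \mathit{GL}(n,\mathbb{F})$ to the normal form $DAU = S_\pi$ column by column, and proves disjointness by examining the leftmost column where the lowest nonzero entries of $S_{\pi_1}$ and $S_{\pi_2}$ sit in different rows. The identification with Bruhat cells appears only in a remark afterwards, where the reverse inclusion $B P_\pi B \subseteq B_\beta$ is deduced for free from the fact that both families partition $\mathit{GL}(n,\mathbb{F})$. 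You instead prove the cell identity $B_\beta = B P_\pi B$ head-on --- the forward containment by the factorization $C = uP_\pi$ with $u \in U_\pi = U^+ \cap P_\pi U^- P_\pi^{-1}$, the reverse by the refined normal form $BP_\pi B = U_\pi P_\pi B$ --- and then cite the classical Bruhat decomposition for both coverage and disjointness. Your support computations ($(NP_\pi^{-1})_{r,s} = N_{r,\pi^{-1}(s)}$ matching the inversion set of $\pi^{-1}$, and the inverse substitution $N = (u-I)P_\pi$ landing exactly on the free positions) are the crux and they check out. What your approach buys is the stronger statement $B_\beta = BP_\pi B$ as an integral part of the argument rather than an afterthought; what it costs is dependence on two nontrivial imported facts (the Bruhat decomposition and the uniqueness of the $U_\pi P_\pi B$ normal form over an arbitrary field), both standard but neither proved here, whereas the paper's version is entirely elementary.
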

\begin{proof}
  \noindent {\bf Step 1.}  $\mathit{GL}(n,\mathbb{F}) = \bigcup_{\beta
    \in S_n} B_\beta.$

  \medskip

  The path subset $B_\beta$ consists of all matrices of the form $D
  S_\pi U$ where $D$ is invertible diagonal; $U$ is upper triangular
  with $1$'s on the main diagonal; and (in view of
  Proposition~\ref{prop:kalman}) $S_\pi$ is a matrix obtained from the
  permutation matrix for $\pi$, $P_\pi$, (where $\pi$ is the
  permutation associated to $\beta$) by allowing arbitrary
  $(i,j)$-entries $s_{i,j} \in \mathbb{F}$ in positions that are above
  the $1$ of $P_\pi$ in column $j$ and to the left of the $1$ of
  $P_\pi$ in row $i$.  Therefore, it suffices to show that for any $A
  \in GL(n, \mathbb{F})$ there exists matrices $D$, $U$, and $S_\pi$
  as above such that
  \begin{equation} \label{eq:factorization} D A U = S_\pi
  \end{equation}
  (since then $A = D^{-1} S_\pi U^{-1} \in B_\beta$).

  Given $A$, we arrive at (\ref{eq:factorization}) via the following
  procedure:
  \begin{enumerate}
  \item In the $1$-st column of $A$, locate the lowest nonzero entry,
    say it is in row $i_1$.  Choose the $(i_1,i_1)$-entry of $D$ to
    scale this entry to $1$.

  \item Right multiply $A$ by an upper-triangular matrix of the form
    $U_1 = I+u_{1,j}E_{1,j}$ where the $u_{1,j}$ are chosen to zero
    out all entries to the right of the $1$ in position $(i_1,1)$.

  \item Move on to the next column and repeat this procedure.
  \end{enumerate}

  Note that the sequence of rows $i_1, \ldots, i_n$ where the $1$'s in
  columns $1, \ldots, n$ are arranged at Step (1) of the procedure
  must all be distinct.  [When $i_{m}$ is determined all the entries
  to the right of the $1$'s in positions $(i_1,1)$, through
  $(i_{m-1},m-1)$ have already become $0$.  However, there is still at
  least one non-zero entry in column $m$ since $A \in GL(n,
  \mathbb{F})$.]  Therefore, the resulting matrix, $D A (U_1U_2 \cdots
  U_n)$, has the required form $S_\pi$.

  \medskip

  \noindent {\bf Step 2.}  The $B_\beta$ are disjoint.

  \medskip

  If this were not the case, then for distinct permutations $\pi_1
  \neq \pi_2$, we can arrange
  \[
  S_{\pi_1} = D S_{\pi_2} U
  \]
  for some $D$, $U$, $S_{\pi_m}$ as above.  Considering the left most
  column where the position of the lowest non-zero entry (necessarily
  a $1$) of $S_{\pi_1}$ and $S_{\pi_2}$ differs leads to a
  contradiction.  [WLOG we can assume the position of this $1$ in
  $S_{\pi_1}$ is lower than in $S_{\pi_2}$.  Then, there is no way for
  the corresponding entry of $DS_{\pi_2} U$ to become non-zero.]

\end{proof}

\begin{remark}
  In fact, the decomposition $GL(n, \mathbb{F}) = \bigsqcup_{\beta \in
    S_n} B_\beta$ is precisely the Bruhat Decomposition of $GL(n,
  \mathbb{F})$ into double cosets
  \[
  GL(n, \mathbb{F}) = \bigsqcup_{\pi \in S_n} B P_\pi B
  \]
  where $B$ is the Borel subgroup of upper triangular invertible
  matrices.  To see this, note that the matrices $S_\pi$ have the form
  $V P_\pi$ with $V$ upper triangular, so that $B_\beta \subset B
  P_\pi B$ follows.  The reverse inclusion then also holds since both
  collections of subsets partition $GL(n, \mathbb{F})$.  \end{remark}

\section{The DGA of a satellite} \label{sec:DGA}

In this section, we consider satellites $S(K,\beta)$ with pattern an
$n$-stranded positive braid $\beta \subset J^1S^1$, and give a
computation of the (fully non-commutative) DGA
$(\alg(S(K,\beta)), \partial)$.

\subsection{Satellites via the $xy$-projection} \label{sec:alg}

Given a (connected) Legendrian knot $K \subset J^1\R$ and a Legendrian
link $L \subset J^1S^1$, recall that the Legendrian satellite of $K$
with pattern $L$ is formed as follows: Using the Weinstein Tubular
Neighborhood Theorem, we find a contactomorphism $\psi: N' \rightarrow
N(K)$ from a neighborhood of the $0$-section  $N' \subset J^1S^1$ to
a neighborhood of $K$, $N(K) \subset J^1\R$.  The satellite, $S(K,L)$,
is then obtained by scaling the $y$- and $z$-coordinates of $L$ to
shrink it into $N'$, and then applying $\psi$.

When $K \subset J^1\R$ is equipped with a basepoint $*$ and $\beta
\subset J^1S^1$ is a positive braid with $n$-strands, we form a
basepointed $xy$-diagram for the satellite $S(K,\beta)$ using the
following procedure.  (See Figure \ref{fig:Satellite} for an
illustration.)
\begin{enumerate}
\item Begin with an $xy$-diagram for $\beta$ as in Section
  \ref{sec:PathMatrix}. Basepoints, $*_1, \ldots, *_n$, appear just to
  the left of the crossings of $\beta$, so that $*_i$ belongs to the
  $i$-th strand (as numbered with decreasing $y$-coordinate).

\item Next, this $xy$-diagram for $\beta$ is placed in an annular
  thickening of the $xy$-projection of $K$, so that all crossings of
  $\pi_{xy}(\beta)$ occur in a small neighborhood of $*$.  Here, we
  use the orientation of $K$ and the blackboard framing to identify an
  immersed neighborhood of the $xy$-diagram of $K$ with $S^1 \times
  (-\epsilon, \epsilon)$.
\end{enumerate}

Note that as required in Section \ref{sec:RepNumber}, every component
of $S(K,\beta)$ receives at least one basepoint, although there may
be components with multiple basepoints.  For instance, if $\beta =
\sigma_1 \sigma_2 \sigma_1 \sigma_3 \in B_4$, then $S(K, \beta)$ has
$2$ components with basepoints $*_1, *_3, *_4$ on one component and
$*_2$ on the other, as seen in Figure~\ref{fig:Satellite}.

\begin{remark}
  To see that the above construction produces an $xy$-diagram for
  $S(K,\beta)$, note that the contactomorphism $\psi$ may be chosen to
  preserve the Reeb vector field, 
  and hence induces a well defined map of Lagrangian projections
  $\pi_{xy}(N') \rightarrow \pi_{xy}(N(K))$.  Alternatively, an
  $xy$-diagram of the form described above can be obtained via the
  combinatorial description (via $xz$-projections) of $S(K,\beta)$
  from \cite{NgTraynor}.
\end{remark}

\begin{figure}[t]
  \begin{centering}
    \begin{subfigure}{\textwidth}
      \labellist
      \small
      \pinlabel $1$ [tl] at 808 561
      \pinlabel $2$ [tl] at 808 529 
      \pinlabel $3$ [tl] at 808 497
      \pinlabel $4$ [tl] at 808 463
      \endlabellist
      \includegraphics[width=.9\textwidth]{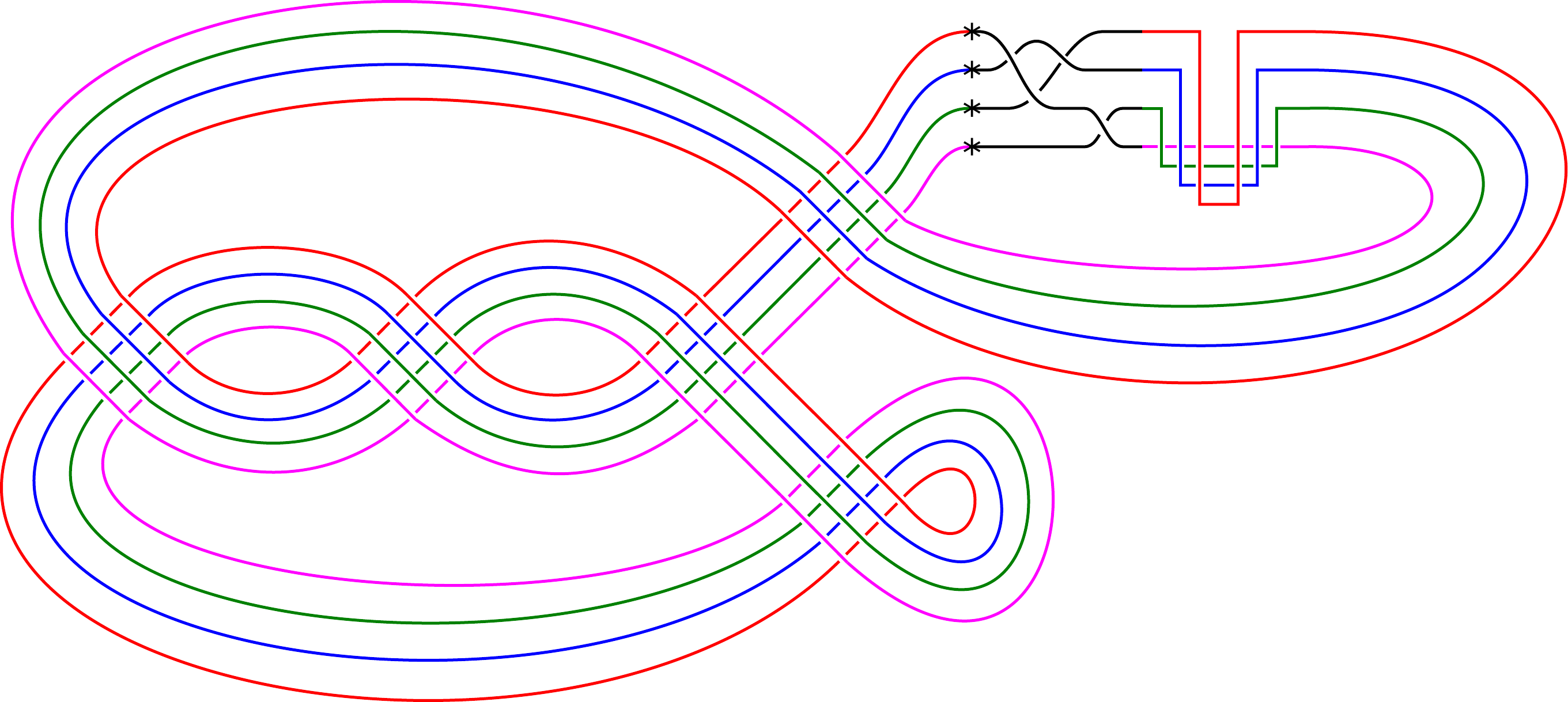}
    \end{subfigure}
  \end{centering}\\
  \begin{subfigure}{.4\textwidth}
    \labellist
    \small
    \pinlabel $1$ [br] at 68 152
    \pinlabel $2$ [br] at 51 136
    \pinlabel $3$ [br] at 36 121
    \pinlabel $4$ [br] at 19 104
    \pinlabel $1$ [bl] at 106 155
    \pinlabel $2$ [bl] at 122 140
    \pinlabel $3$ [bl] at 137 123
    \pinlabel $4$ [bl] at 153 107
    \pinlabel $a^k_{11}$ [b] at 85 138
    \pinlabel $a^k_{12}$ [b] at 101 122
    \pinlabel $a^k_{13}$ [b] at 117 106
    \pinlabel $a^k_{14}$ [b] at 133 91
    \pinlabel $a^k_{21}$ [b] at 69 122
    \pinlabel $a^k_{22}$ [b] at 85 106
    \pinlabel $a^k_{23}$ [b] at 101 91
    \pinlabel $a^k_{24}$ [b] at 117 74
    \pinlabel $a^k_{31}$ [b] at 53 106
    \pinlabel $a^k_{32}$ [b] at 69 91
    \pinlabel $a^k_{33}$ [b] at 85 74
    \pinlabel $a^k_{34}$ [b] at 101 58
    \pinlabel $a^k_{41}$ [b] at 37 91
    \pinlabel $a^k_{42}$ [b] at 53 74
    \pinlabel $a^k_{43}$ [b] at 69 58
    \pinlabel $a^k_{44}$ [b] at 85 41
    \endlabellist
    \includegraphics[width=2.4in]{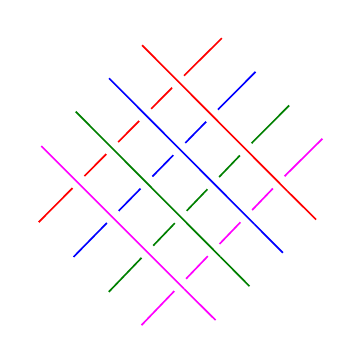}
  \end{subfigure}
  \begin{subfigure}{.4\textwidth}
    \labellist
    \small
    \pinlabel $1$ at -2 155
    \pinlabel $2$ at -2 124
    \pinlabel $3$ at -2 92
    \pinlabel $4$ at -2 60
    \pinlabel $x_{12}$ [tr] at 55 28
    \pinlabel $x_{13}$ [tr] at 55 44
    \pinlabel $x_{14}$ [tr] at 55 60
    \pinlabel $x_{23}$ [tr] at 38 44
    \pinlabel $x_{24}$ [tr] at 38 60
    \pinlabel $x_{34}$ [tr] at 22 60
    \pinlabel $y_{12}$ [tl] at 85 28
    \pinlabel $y_{13}$ [tl] at 85 44
    \pinlabel $y_{14}$ [tl] at 85 60
    \pinlabel $y_{23}$ [tl] at 101 44
    \pinlabel $y_{24}$ [tl] at 101 60
    \pinlabel $y_{34}$ [tl] at 117 60
    \endlabellist
    \includegraphics[width=2in]{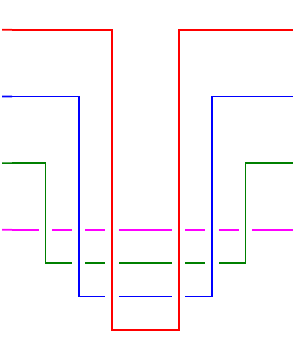}
  \end{subfigure}
 
  \caption{The top figure is an $xy$-diagram of $S(K,\beta)$ for $K$ a
    right-handed trefoil and $\beta$ the braid from
    Figure~\ref{fig:BraidRes}. In the top figure, $t_i$ corresponds to
    the basepoint $*$ with label $i$. The bottom left figure gives
    the labels for the crossings in the lattice of crossings of $S(K,
    \beta)$ arising from the crossing $a_k$ of $K$ if the strands have
    the given labels (as in the three left-most lattices of the top
    figure. The bottom right figure gives the labels of the crossings
    in the $x,y$-lattice (the dip).}
  \label{fig:Satellite}
\end{figure}

\subsection{Generators and grading}
Outside of a neighborhood of $*$ where the crossings from $\beta$ are
located, the $xy$-diagram of $S(K,\beta)$ consists of $n$ parallel
copies of $K$.  We number these copies of $K$ from $1$ to $n$ so that
the labeling increases as the coordinate transverse to $K$ decreases;
the $i$-th copy corresponds to the part of the $xy$-projection of
$\beta$ that stretches around the $S^1$ factor from the right side of
the dip to the basepoint $*_i$.

Choose a labeling of the Reeb chords of $K$ as $a_1, \ldots, a_r$.
Along with the invertible generator $t^{\pm1}$ corresponding to the
basepoint $*$, the $a_k$ generate $\alg(K)$.

The generators of $\mathcal{A}(S(K,\beta))$ are as follows:

\begin{enumerate}
\item For each $1 \leq k \leq r$, there is a collection of $n^2$
  crossings of $S(K,L)$ arranged in a lattice at the location of
  $a_k$.  We label them as
  \[
  a^k_{i,j}, \quad 1 \leq i,j \leq n
  \]
  so that the overstrand (resp. understrand) of $a^k_{i,j}$ belongs to
  the $i$-th (resp. $j$-th) copy of $K$.

\item The crossings of the $xy$-diagram of $\beta$ also appear as
  crossings of $S(K,L)$.  We label them as
  \[
  p_1, \ldots, p_s; \quad x_{i,j}, y_{i,j}, \mbox{ for $1 \leq i < j
    \leq n$}
  \]
  as in Section \ref{sec:PathMatrix}.

\item Label the invertible generators corresponding to the basepoints
  $*_i$ as
  \[
  t_i, \quad 1 \leq i \leq n.
  \]
\end{enumerate}

We choose the grading
\[
|t_i|= -2 r(K)
\]
for all invertible generators, $t_i$.  Then, a choice of $\Z$-valued
(with the value decreasing by $-2r(K)$ at $*$, as in Section
\ref{sec:Zgrading}) Maslov potential, $\mu_K$, on $K$ and $\Z$-valued
Maslov potential $\mu_\beta$ on $\beta$ (continuous at base points) specifies a $\Z$-valued (with
the value decreasing by $-2r(K)$ at all $*_1, \ldots, *_n$) Maslov
potential on $S(K,\beta)$ via $\mu = \mu_K + \mu_\beta$.  Letting
$(\mu_1, \ldots, \mu_n)$ be the values of $\mu_\beta$ at the base
points $*_1, \ldots, *_n$, the grading of generators of $S(K,\beta)$
satisfies
\[
|a^k_{i,j}| = |a_k| + \mu_i - \mu_j; \quad \quad |x_{i,j}| =
\mu_i-\mu_j; \quad \quad |y_{i,j}| = \mu_i-\mu_j -1; \quad \quad |t_i|
= -2 r(K)
\]
and the grading of $p_i$ in $\mathcal{A}(S(K,\beta))$ agrees with its
grading in $\mathcal{A}(\beta)$.

Notice that this choice of basepoints and Maslov potential for
$S(K,\beta)$ is consistent with (A1)-(A3) of Section
\ref{sec:defnumbers}, so that $\mathcal{A}(S(K,\beta))$ is suitable
for computing $m$-graded augmentation numbers of $S(K,\beta)$ for any
$m \big\vert 2r(K)$.  (Note that the rotation number of a component
$C\subset S(K,\beta)$ corresponding to a component of $\beta$ with
winding number $n'$ around $S^1$ is $r(C) = n'r(K)$, so any divisor of
$2r(K)$ also divides $2r(S(K,\beta))$.  When computing augmentation
numbers of $S(K,\beta)$ with $m$ even, we will assume $r(K)=0$, so
that $r(S(K,\beta))=0$ also holds as required.)

\subsection{The differential} \label{sec:Matrix} Collect generators of
$\mathcal{A}(S(K,\beta))$ into $n\times n$ matrices as follows.  For
$1 \leq k \leq r$, let
\[
A_k = (a^k_{i,j}), \quad 1\leq k \leq r.
\]
As in Section \ref{sec:Path}, define \emph{strictly upper triangular}
matrices $X$ and $Y$ to have $(i,j)$-entries $x_{i,j}$ and $y_{i,j}$
when $1 \leq i<j \leq n$ and $0$ when $i \geq j$, and (invertible)
diagonal matrices $\Sigma$ and $\Delta$ as in (\ref{eq:SigDelta}).

Define an algebra homomorphism
\begin{equation} \label{eq:DefPhi} \Phi: \mathcal{A}(K) \rightarrow
  \mathit{Mat}(n,\mathcal{A}(S(K,\beta)), \quad a_k \mapsto A_k\Sigma,
  \quad t \mapsto P^{xy}_\beta.
\end{equation}
(Note that $P^{xy}_\beta$ is invertible; see
Proposition~\ref{prop:PathProp}.)  As in Convention \ref{conv:1}, the
differential on $\mathcal{A}(S(K,\beta))$ applied entry-by-entry
produces $\partial: \mathit{Mat}(n,\mathcal{A}(S(K,\beta)) \rightarrow
\mathit{Mat}(n,\mathcal{A}(S(K,\beta))$.

\begin{proposition} \label{prop:DiffProof} The differential in
  $\mathcal{A}(S(K,\beta))$ agrees with the differential for $\beta
  \subset J^1S^1$ on the sub-algebra $\mathcal{A}(\beta) \subset
  \mathcal{A}(S(K,\beta))$ and satisfies
  \begin{align}
    &\Sigma \cdot \partial( A_k\Sigma)=\Phi\circ\partial(a_k) - (Y\Sigma)\cdot \Phi(a_k)+(-1)^{\lvert a_k\rvert}\Phi(a_k) \cdot (Y\Sigma),\\
    &\Sigma\cdot\partial(Y\Sigma)=-(Y\Sigma)^2.
  \end{align}
\end{proposition}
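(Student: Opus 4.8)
The plan is to read off the differential of $\mathcal{A}(S(K,\beta))$ directly from immersed disks in the $xy$-projection, exploiting that the diagram splits into two regions: a \emph{braid region} near $*$ containing all the $p_i, x_{i,j}, y_{i,j}$ crossings of $\beta$ together with its dip, and its complement, consisting of $n$ parallel copies of $\pi_{xy}(K)$ carrying an $n\times n$ lattice of crossings $a^k_{i,j}$ at the location of each Reeb chord $a_k$ of $K$. The organizing principle I would invoke throughout is that the dips \emph{confine} disks: a disk whose boundary enters the braid region may only cross it via the downward corners recorded by the path matrix, and cannot wrap the $S^1$ factor. This is the same localization underlying Sivek's van Kampen argument used in Step~2 of Proposition~\ref{prop:Pxy1}.

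First I would settle the agreement of the differential on $\mathcal{A}(\beta)$ and the relation $\Sigma\cdot\partial(Y\Sigma) = -(Y\Sigma)^2$ together, as both are ``localized'' statements. A disk with positive corner at a $\beta$-generator has all of its corners in the braid region by confinement, where the diagram of $S(K,\beta)$ restricts to that of $\beta$; hence these disks are exactly those computing $\partial_\beta$, and $\mathcal{A}(\beta)$ is a sub-DGA. The $Y$-identity is the special case in which the positive corner is at $y_{i,j}$: such a disk is trapped in the right half of the dip and can only acquire negative corners at pairs $y_{i,k}, y_{k,j}$ with $i<k<j$. I would present this standard dip count entrywise, with the Reeb and orientation signs of Figure~\ref{fig:Reeb} absorbed into the diagonal matrix $\Sigma$, and identify the result with $-(Y\Sigma)^2$.

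The substance is the formula for $\Sigma\cdot\partial(A_k\Sigma)$. Here I would classify the disks with positive corner at some $a^k_{i,j}$ by how their boundary runs along the copies of $K$. The disks that never touch the dip with an upward ($y$) corner are in bijection with disks of $\partial a_k$ in $\mathcal{A}(K)$: each traversal of the braid region at $*$ contributes an entry of $P^{xy}_\beta = \Phi(t)$ and each corner at a lattice $a^l$ contributes an entry of $A_l\Sigma = \Phi(a_l)$, so summing over the word $w(u)$ of each such disk $u$ reproduces $\Phi\circ\partial(a_k)$ exactly because $\Phi$ acts entrywise by $a_l \mapsto A_l\Sigma$ and $t\mapsto P^{xy}_\beta$. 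The remaining disks acquire a negative corner at a $y$-generator of the dip; reading the boundary counterclockwise from the positive puncture, such a corner occurs either before or after the portion carrying the $a_k$-data, contributing the corrections $-(Y\Sigma)\Phi(a_k)$ and $(-1)^{|a_k|}\Phi(a_k)(Y\Sigma)$ respectively. It is reassuring that for $g = t$ (with $\partial t = 0$ and $|t|$ even) the same template reads $\Sigma\cdot\partial(P^{xy}_\beta) = P^{xy}_\beta(Y\Sigma) - (Y\Sigma)P^{xy}_\beta$, which is precisely Proposition~\ref{prop:Pxy1}; this both checks consistency and suggests using that proposition's gluing formalism to organize the $a_k$ case.

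The main obstacle I anticipate is the disk bookkeeping in this last step: proving that the only disks beyond the lifts of $\partial a_k$ are the two $y$-corner families, that none are double-counted at the interface between the lattice and the dip, and that all signs match. Concretely, one must verify that the orientation signs of Figure~\ref{fig:Reeb}, together with the $\Sigma$ and $\Delta$ factors built into $\Phi$ and into $P^{xy}_\beta = \Delta\cdot P^{xz}_\beta\cdot(I+X\Sigma)$, combine to produce exactly the stated placement of the $\Sigma$'s and the sign $(-1)^{|a_k|}$. Managing these signs, rather than identifying the disks themselves, is where the real care is required.
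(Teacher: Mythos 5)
Your proposal follows essentially the same route as the paper's proof: the paper's dichotomy of ``thin'' versus ``thick'' disks coincides with your classification by presence or absence of a negative corner at some $y_{i,j}$, the thick disks assemble into $\Sigma\cdot\Phi(\partial a_k)\cdot\Sigma$ via matrix entries and the path matrices $P^{xy}_\beta$ and $Q^{xy}_\beta$ exactly as you describe, and the thin triangles account for the two commutator-type correction terms and for the equation $\partial Y=-\Sigma Y\Sigma Y$. The sign bookkeeping you defer is indeed where the paper spends most of its effort (tracking $\iota'$, the orientation signs, and the $(-1)^{\mu_i}$ factors entry by entry), but it works out exactly as you anticipate.
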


In particular, the formula for $\partial P^{xy}_\beta$ established in
Proposition~\ref{prop:Pxy1} also holds in $\mathcal{A}(S(K,\beta))$.

\begin{proof}
  Following \cite{NgR}, we will divide the disks contributing terms to
  the differential for $\alg(S(K,\beta))$ into two disjoint sets: {\bf
    thin disks} and {\bf thick disks}. Thin disks are those completely
  contained in the neighborhood $N(K)$ of the $xy$-diagram of $K$
  where the satellite construction is completed and all other disks
  are thick.  
	Note that the disks used in computing the differential in
  $\mathcal{A}(\beta)$ correspond to thin disks in $S(K,\beta)$. After
  some consideration, we see that the only disks contributing to
  $\partial p_k$, $\partial x_{i,j}$, and $\partial y_{i,j}$ are
  exactly these thin disks.

  It only remains to compute the differentials of $a^k_{i,j}$ and
  $y_{i,j}$ for all $i,j,k$.  Since $\partial\Sigma=0$, the signed
  Leibniz rule tells us $\partial(A_k\Sigma)=(\partial
  A_k)\cdot\Sigma$ and $\partial(Y\Sigma)=(\partial Y)\cdot\Sigma$, so
  it suffices to show the following:
  \begin{align*}
    &\partial(A_k)=\Sigma\Phi(\partial(a_k))\Sigma -\Sigma Y\Sigma A_k+(-1)^{\lvert a_k\rvert}\Sigma A_k\Sigma Y,\\
    &\partial(Y)=-(\Sigma Y)^2.
  \end{align*}

  \noindent{\bf Differential of $A_k$:}

  \noindent(Thick disks) Each thick disk, $\widetilde{u}$, of $S(K,\beta)$  corresponds to a disk, $u$, of $K$ as follows.  Observe that (since neighborhoods of positive or negative punctures only cover a single quadrant at crossings) each time the boundary of a thick disk for $S(K,\beta)$ passes through the lattice of crossings $(a^k_{i,j})$ corresponding to a crossing $a_k$ of $K$, it can have at most one puncture.  Then, $\widetilde{u}$ will have a negative  (resp. positive) puncture at some $a^k_{i,j}$ if and only if $u$ has a negative (resp. positive) puncture at $a_k$. In addition, when the boundary $\widetilde{u}$ enters into the region near the base point of $K$ where the crossings from the $xy$-diagram of $\beta$ appear it must follow a path as in the definition of the left-to-right (resp. right-to-left) path matrix $P^{xy}_\beta$ (resp. $Q^{xy}_\beta$), assuming the  orientation of the boundary of the thick disk agrees (resp. disagrees) with the orientation of $K$ when it enters the region; see Definition
    \ref{def:pathmatrix}. The corresponding disk $u$ simply passes by the base point.
	
  In particular, for all $1\leq k\leq m$ and all
  $1\leq i,j\leq n$, the thick disks which contribute to
  $\partial(a^k_{i,j})$ 
  are in many-to-one correspondence with disks
  which contribute to $\partial a_k$. The negative corners of a disk
  for $\partial(a^k_{i,j})$ correspond to the negative corners of a
  disk for $\partial(a_k)$, unless the boundary of the disk passes
  through the basepoint. We will discuss this case later. We will
  drop the $k$'s from $a_k$ and $a^k_{i,j}$ when it remains clear
  which generator we are discussing. Consider the disk contributing to
  $\partial a$ with a positive corner at $a$ and negative corners at
  $b_1,\ldots,b_\ell$. This disk corresponds to the term
  $\iota'\iota_0b_1\iota_{1}\cdots b_\ell\iota_{\ell}$ in $\partial
  a$. (Recall sign conventions from Section~\ref{sec:background}.) The
  disks contributing to $\partial(a_{i,j})$ corresponding to this disk
  have a positive (Reeb sign) corner at $a_{i,j}$ and negative corners
  at $b^r_{i_{r-1},i_r}$ for $1\leq r\leq \ell$, where $i_0=i$,
  $i_\ell=j$, and $1\leq i_r\leq n$ for all $0\leq r\leq \ell$. Recall
  that the orientation of strand $r$ agrees with the orientation of
  $K$ if and only if $\mu_r\equiv0\mod 2$. Moreover, the orientation
  sign for the corner at $a$ (resp. $b_r$) will agree with the
  orientation sign for the corner at $a_{i,j}$ (resp.
  $b^r_{i_{r-1},i_r}$) if and only if the orientation of strand $j$
  (resp. $i_r$) agrees with the orientation of $K$. Therefore such a
  disk contributes the term
  \begin{align*}
    &(-1)^{\mu_0}\iota'(-1)^{\mu_{i_\ell}}\iota_0b^1_{i_0,i_1}(-1)^{\mu_{i_1}}\iota_{1}\cdots b^\ell_{i_{\ell-1},i_\ell}(-1)^{\mu_{i_\ell}}\iota_{\ell}\\
    &\quad=\iota'\iota_0\iota_{1}\cdots\iota_{\ell}\left[(-1)^{\mu_{i_0}}\right]\cdot
    \left[b^1_{i_0,i_1}(-1)^{\mu_{i_1}}\right]\cdots
    \left[b^\ell_{i_{\ell-1},i_\ell}(-1)^{\mu_{i_\ell}}\right]\cdot\left[(-1)^{\mu_{i_\ell}}\right].
  \end{align*}
  Summing over all $1 \leq i_1,\ldots, i_{\ell-1} \leq n$ gives the
  $(i,j)$-entry in the matrix
  \[
  \iota'\iota_0\iota_{1}\cdots\iota_{\ell} \Sigma \cdot
  (B_1\Sigma)\cdots (B_\ell \Sigma)\cdot \Sigma =
  \Sigma\cdot\Phi(\iota'\iota_0\iota_{1}\cdots\iota_{\ell}b_1\cdots
  b_\ell)\cdot\Sigma.
  \]
	
  Now consider the disks for $\partial a_k$ where the boundary of the
  disk passes through the basepoint.  Such a disk, $u$, contributes a
  term of the form $\iota(u) \cdot c_1 c_2 \cdots c_\ell$ where some
  $c_i$ arise from negative corners at Reeb chords and some $c_i=
  t^{\pm1}$ arise when the boundary of $u$ passes the basepoint of
  $K$.  (The $t_{i}^{\pm1}$ terms do not contribute to the sign
  $\iota(u) \in \{\pm1\}$.)  The corresponding thick disks for
  $\partial(a_{i,j})$ again arise from sequences $i=i_0, i_1, \ldots,
  i_\ell = j$, $1 \leq i_1, \ldots, i_{\ell-1} \leq n$ together with,
  for each $c_r=t$ (resp. $c_r= t^{-1}$) term, a left-to-right
  (resp. right-to-left) section of the $xy$-diagram of $\beta$,
  beginning on strand $i_{r-1}$ of $\beta$ and ending on strand
  $i_{r}$ of $\beta$, with corners as in the definition of the path
  matrix $P^{xy}_\beta$ (resp. $Q^{xy}_\beta$); see Definition
  \ref{def:pathmatrix}.  (For terms with $c_r =b_k$, the corresponding
  thick disk has a negative corner at $b^k_{i_{r-1},i_r}$ as before).
  From the definition of the path matrices, since
  \[
  \Phi(t) = P^{xy}_\beta \quad \mbox{and} \quad \Phi(t^{-1}) =
  (P^{xy}_\beta)^{-1} = Q^{xy}_\beta \quad \quad \mbox{(by
    Proposition~\ref{prop:PathProp} (2))},
  \]
  we see that once again the contribution to $\partial(a_{i,j})$ from
  all such thick disks is the $(i,j)$-entry of
  $\Sigma\cdot\Phi(\iota(u) c_1\cdots c_\ell)\cdot\Sigma.$
	
  This completes the entry-by-entry check that the contribution of
  thick disks to $\partial A_k$ is the term
  $\Sigma\cdot\Phi(\partial(a_k))\cdot\Sigma$.

  \medskip\noindent (Thin disks) There are two types of thin disks
  contributing to $\partial(a_{i,j})$: triangles with negative corners
  at $y_{i,\ell}$ and $a_{\ell,j}$ for some $i<\ell$ and triangles
  with negative corners at $a_{i,\ell}$ and $y_{\ell,j}$ for some
  $\ell<j$. The former contributes the term
  \[\iota'\iota_{0}y_{i,\ell}\iota_{1}a_{\ell,j}\iota_{2}=(-1)^{\mu_i+1}\iota_0y_{i,\ell}(-1)^{\mu_\ell}a_{\ell,j}\iota_2=-(-1)^{\mu_i}y_{i,\ell}(-1)^{\mu_\ell}a_{\ell,j}\]
  to $\partial(a_{i,j})$ since $\iota_0=\iota_2$ and thus $-\Sigma Y\Sigma A_k$ to $\partial
  A_k$. The latter contributes the term $\iota'\iota_{0}a_{i,\ell}\iota_{1}y_{\ell,j}\iota_{2}$. By considering the orientations of strand $j$ and the two possible configurations of the disk (the quadrant covered by the disk near $a^k_{i,j}$ can either be to the right or the left of $a^k_{i,j}$), one can check that \[\iota'\iota_2=(-1)^{\lvert a^k_{ij}\rvert+1}=(-1)^{\lvert a_k\rvert+\mu_i+\mu_j+1}.\]
  The signs of the quadrants covered by the disk near $a^k_{i,\ell}$ and $a^k_{i,j}$ only depends on whether strands $j$ and $\ell$ are oriented the same direction or not, so $\iota_0\iota_1=(-1)^{\mu_j+\mu_\ell+1}$. Therefore the disk contributes
  \[\iota'\iota_{0}a_{i,\ell}\iota_{1}y_{\ell,j}\iota_{2}=(-1)^{\lvert a_k\rvert+\mu_i+\mu_j+1}(-1)^{\mu_j+\mu_\ell+1}a_{i,\ell}y_{\ell,j}=(-1)^{\lvert
    a_k\rvert}(-1)^{\mu_i}a_{i,\ell}(-1)^{\mu_\ell}y_{\ell,j}\] to
  $\partial(a_{i,j})$ and thus $(-1)^{\lvert a_k\rvert}\Sigma A_k\cdot\Sigma
  Y$ to $\partial A_k$.

  \medskip\noindent {\bf Differential of $Y$:} As in \cite{L1, Sab1},
  the only disks contributing to $\partial y_{i,j}$ are disks with
  three corners, a positive corner at $y_{i,j}$ and negative corners
  at $y_{i,k}$ and $y_{k, j}$ for some $i<k<j$. Note that the disks are all contained in the dip and the signs for the disks are the same whether the disks are viewed in $J^1S^1$ or $S(\Lambda,\beta)$. We will view the disks as being in $J^1S^1$. We know strand $i$ is
  oriented to the right if and only if $\mu_i\equiv0\mod2$, so
  $\iota'=(-1)^{\mu_i+1}$. The orientation signs at the corners depend
  only on the orientation of the under-strand at the crossing, so the
  disk contributes the
  term \[(-1)^{\mu_i+1}(-1)^{\mu_j+1}y_{i,k}(-1)^{\mu_k}y_{k,j}(-1)^{\mu_j+1}=-(-1)^{\mu_i}y_{i,k}(-1)^{\mu_k}y_{k,j}.\]
  So
  \[\partial
  y_{i,j}=-\sum_{i<k<j}(-1)^{\mu_i}y_{i,k}(-1)^{\mu_k}y_{k,j}.\]
  Therefore $\partial Y=-\Sigma Y\Sigma Y.$

\end{proof}

\begin{corollary} \label{cor:diff} Any generator $x = a_k$ or $x=t$ of
  $\mathcal{A}(K)$ satisfies
  \[
  \Sigma \cdot \partial \circ\Phi(x)=\Phi\circ\partial(x) -
  (Y\Sigma)\cdot \Phi(x)+(-1)^{\lvert x\rvert}\Phi(x) \cdot (Y\Sigma).
  \]
\end{corollary}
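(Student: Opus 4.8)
The plan is to treat the two types of generators, $x = a_k$ and $x = t$, separately, since in each case the asserted identity has already been recorded (up to a small amount of bookkeeping) in either Proposition~\ref{prop:DiffProof} or Proposition~\ref{prop:Pxy1}.

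First I would dispatch the case $x = a_k$. By the definition \eqref{eq:DefPhi} of $\Phi$ we have $\Phi(a_k) = A_k \Sigma$, so $\partial \circ \Phi(a_k) = \partial(A_k\Sigma)$ and the left-hand side of the corollary is exactly $\Sigma \cdot \partial(A_k\Sigma)$. The desired identity is then verbatim the first displayed equation of Proposition~\ref{prop:DiffProof}, so nothing further is required for this generator.

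Next I would handle $x = t$. Here $\Phi(t) = P^{xy}_\beta$, and two simplifications occur on the right-hand side of the claimed formula. Since $\partial t = 0$ in $\mathcal{A}(K)$, the term $\Phi \circ \partial(t)$ vanishes, and since $|t| = -2r(K)$ is even the sign $(-1)^{|t|}$ equals $+1$. Thus the claimed formula collapses to
\[
\Sigma \cdot \partial(P^{xy}_\beta) = P^{xy}_\beta (Y\Sigma) - (Y\Sigma) P^{xy}_\beta,
\]
which is precisely identity \eqref{eq:Pxy1} of Proposition~\ref{prop:Pxy1}. As noted immediately after Proposition~\ref{prop:DiffProof}, this identity continues to hold inside $\mathcal{A}(S(K,\beta))$: the matrices $Y$, $\Sigma$, and $P^{xy}_\beta$ all have entries in the subalgebra $\mathcal{A}(\beta) \subset \mathcal{A}(S(K,\beta))$, on which $\partial$ restricts to the differential of $\beta$, so the computation of Proposition~\ref{prop:Pxy1} transports unchanged.

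Since both cases reduce directly to already-established statements, I do not expect any serious obstacle. The only points requiring care are the two observations in the $x=t$ case — that $\partial t = 0$ kills the $\Phi\circ\partial(t)$ term and that the even degree of $t$ trivializes the sign $(-1)^{|t|}$ — which are exactly what align the uniform formula of the corollary with the $(Y\Sigma)$-commutator form of Proposition~\ref{prop:Pxy1}.
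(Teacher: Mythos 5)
Your proof is correct and follows exactly the paper's argument: the case $x=a_k$ is the first displayed identity of Proposition~\ref{prop:DiffProof}, and the case $x=t$ reduces (via $\partial t=0$ and $(-1)^{|t|}=+1$ since $|t|=-2r(K)$ is even) to the identity of Proposition~\ref{prop:Pxy1}, which holds in $\mathcal{A}(S(K,\beta))$ as noted after Proposition~\ref{prop:DiffProof}. The extra bookkeeping you supply is accurate and merely makes explicit what the paper leaves implicit.
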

\begin{proof}
  The case $x=a_k$ is in Proposition~\ref{prop:DiffProof}; the case
  $x=t$ is Proposition~\ref{prop:Pxy1}.
\end{proof}

\begin{remark}
  \begin{itemize}
  \item[(i)] A partial computation of $S(K,\beta)$ over $\Z/2$ is done
    in \cite{NgR}, Theorem 4.16 from the front diagram perspective.
  \item[(ii)] The special case of the $n$-copy, i.e. when $\beta$ is
    the identity braid, appears in \cite{NRSSZ}, Proposition~3.25.
    That computation has some differences in signs when compared with
    our Proposition~\ref{prop:DiffProof} as a result of different (but
    equivalent, see \cite{NgLSFT}) sign conventions for defining the
    differential over $\Z$.
  \end{itemize}
\end{remark}

\section{Satellite ruling polynomials from representation
  numbers} \label{sec:Sat}

We are now prepared to construct a correspondence from augmentations
of $\alg(S(K, \beta))$ to a set consisting of ordered pairs $(d, f)$
where $d$ is a differential on a fixed $n$-dimensional graded vector
space, $V_\beta$, and $f$ is a representation of $\alg(K)$ on
$(V_\beta,d)$.  The image of $t$ under $f$ is restricted to lie in the
path subset of $\beta$, $B_\beta \subset GL(n,\mathbb{F})$.

\subsection{Augmentations of satellites as representations}
Let the satellite link, $S(K,\beta) \subset J^1\R$, be constructed
from the knot $K \subset J^1\R$ and $n$-stranded positive braid $\beta
\subset J^1S^1$ as in Section \ref{sec:alg}.  In particular, we have
basepoints $* \in K$ and $*_1, \ldots, *_n \in \beta$, and $\Z$-valued
Maslov potentials $\mu_K$ and $\mu_\beta$ where $\mu_K$ is possibly
discontinuous at $*$.

Given a field, $\mathbb{F}$, form a $\Z$-graded $\mathbb{F}$-vector
space
\[
V_\beta = \mbox{Span}_\mathbb{F}\{ e_i \,|\, 1 \leq i \leq n \}, \quad
|e_i| = \mu_i := \mu_\beta(*_i).
\]
In the statement and proof of the following theorem, we implicitly use
the ordered basis $\{e_1, \ldots, e_n\}$ to identify
$\mathit{End}(V_\beta)$ with the matrix ring $\mathit{Mat}(n,
\mathbb{F})$. Using this identification, we write $B^m_\beta \subset
\mathit{GL}(V_\beta)$, where $B^m_\beta$ is the path subset of $\beta$
(from Definition \ref{def:PathSub}).  Note that we call a linear map
$A:V_\beta \rightarrow V_\beta$ {\bf strictly upper triangular} when
its matrix is, i.e. if $A e_{j} = \sum_{i<j} a_{i,j} e_i$. Recall how
$\delta$ is induced by $d$ from Section~\ref{sec:repDGVS}.

Recall the notations $\overline{\mbox{Rep}}_m(K, (V,d), B)$ and
$\overline{\mbox{Aug}}_m(K, \mathbb{F})$ from Section
\ref{sec:RepNumber}, and the algebra homomorphism $\Phi:\mathcal{A}(K)
\rightarrow \mathit{Mat}(n, \mathcal{A}(S(K,\beta)))$ from
(\ref{eq:DefPhi}).

\begin{theorem} \label{thm:bijection} Let $m\,|\, 2 r(K)$, and let
  $\mathbb{F}$ be a field.  If $m$ is odd, then assume
  $\mathit{char}(\mathbb{F})=2$.

  Given an $m$-graded augmentation $\epsilon: \alg(S(K, \beta))
  \rightarrow \mathbb{F}$, there is a strictly upper triangular
  differential
  \[d: V_\beta \rightarrow V_\beta\] with $\deg(d) = +1$ mod $m$ and
  an $m$-graded representation
  \[f: (\alg(K), \partial) \rightarrow (-\mathit{End}(V_\beta),
  \delta)\] (with $\delta$ induced by $d$) defined by the matrices
  \begin{equation} \label{eq:themap} d = \epsilon(Y\Sigma) \quad \mbox{and}
    \quad f = \epsilon \circ \Phi.
  \end{equation}
  This correspondence $\epsilon \mapsto (d,f)$ defines a map
  \[
  \overline{\mbox{Aug}}_m(S(K,\beta), \mathbb{F}) \rightarrow
  \bigsqcup_{d}\overline{\mbox{Rep}}_m(K, (V_\beta,d), B^m_\beta),
  \]
  where the union is over all strictly upper triangular differentials
  $d$ with $\deg(d)=+1$ mod $m$.

  Moreover, the map is injective when $\beta$ is $\mathbb{F}$-path
  injective, and is surjective when $\beta$ is path generated.
\end{theorem}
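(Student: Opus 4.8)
The plan is to first check that the assignment $\epsilon \mapsto (d,f)$ lands in the claimed target, and then to analyze its fibers. For well-definedness, recall that an augmentation satisfies $\epsilon\circ\partial = 0$, applied entrywise to matrices. Applying $\epsilon$ to the identity $\Sigma\cdot\partial(Y\Sigma) = -(Y\Sigma)^2$ of Proposition~\ref{prop:DiffProof} gives $0 = -d^2$, so $d = \epsilon(Y\Sigma)$ is a differential; it is strictly upper triangular because $Y\Sigma$ is, and has degree $+1$ mod $m$ because $\epsilon$ kills generators of nonzero degree mod $m$ and $|y_{i,j}| = \mu_i - \mu_j - 1$. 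That $f = \epsilon\circ\Phi$ is an algebra homomorphism is immediate (both $\Phi$ and the entrywise $\epsilon$ are), and applying $\epsilon$ to Corollary~\ref{cor:diff} together with $\epsilon\circ\partial = 0$ yields $f(\partial x) = d\,f(x) - (-1)^{|x|} f(x)\,d = \delta(f(x))$, so $f$ is a chain map; the mod-$m$ grading condition is exactly the statement that $\epsilon$ preserves degree mod $m$, read through the sign in the negated grading of $-\mathit{End}(V_\beta)$. Finally the entries of $P^{xy}_\beta$ lie in $\mathcal{B}$, so $\epsilon|_{\mathcal{B}}$ is a grading-preserving ring homomorphism and $f(t) = \epsilon(P^{xy}_\beta) \in B^m_\beta$ by Definition~\ref{def:PathSub}.

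For injectivity, suppose $\mathbb{F}$-path injectivity holds and $\epsilon_1,\epsilon_2$ produce the same pair $(d,f)$. Since $\Sigma$ is invertible, $\epsilon_1(Y\Sigma) = \epsilon_2(Y\Sigma) = d$ forces agreement on every $y_{i,j}$, and $\epsilon_1(A_k\Sigma) = \epsilon_2(A_k\Sigma) = f(a_k)$ forces agreement on every $a^k_{i,j}$. The remaining generators $p_l, x_{i,j}, t_i$ lie in $\mathcal{B}$, and the restrictions $\epsilon_1|_{\mathcal{B}},\epsilon_2|_{\mathcal{B}} \in \mathit{Ring}_m(\mathcal{B},\mathbb{F})$ satisfy $\epsilon_1(P^{xy}_\beta) = f(t) = \epsilon_2(P^{xy}_\beta)$. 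By $\mathbb{F}$-path injectivity the map $\alpha \mapsto \alpha(P^{xy}_\beta)$ is injective, so $\epsilon_1|_{\mathcal{B}} = \epsilon_2|_{\mathcal{B}}$; hence $\epsilon_1 = \epsilon_2$ on all generators.

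For surjectivity, assume $\beta$ is path generated and fix a target pair $(d,f)$. Define $\epsilon$ on generators by $\epsilon(Y\Sigma) = d$, by $\epsilon(A_k\Sigma) = f(a_k)$, and on $\mathcal{B}$ by $\epsilon|_{\mathcal{B}} = \alpha$, where $\alpha \in \mathit{Ring}_m(\mathcal{B},\mathbb{F})$ is chosen with $\alpha(P^{xy}_\beta) = f(t)$; such an $\alpha$ exists precisely because $f(t) \in B^m_\beta$. This produces a well-defined $m$-graded algebra homomorphism (note $\epsilon(t_i)$ is invertible since $\alpha$ is a ring homomorphism into a field), and by construction $\epsilon(Y\Sigma) = d$ while $\epsilon\circ\Phi = f$, as the two agree on the generators $a_k$ and $t^{\pm1}$ of $\mathcal{A}(K)$. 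It then remains only to verify that $\epsilon$ is an augmentation, i.e. $\epsilon\circ\partial = 0$; since $\epsilon$ is multiplicative and $\epsilon(\partial t_i)=0$, it suffices to check this on the generators $a^k_{i,j}, y_{i,j}, p_l, x_{i,j}$.

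This final verification is the main obstacle, and it splits by generator type. For the $a^k_{i,j}$, applying $\epsilon$ to Proposition~\ref{prop:DiffProof} and using $f(\partial a_k) = \delta(f(a_k))$ gives $\Sigma\,\epsilon(\partial(A_k\Sigma)) = 0$, hence $\epsilon(\partial a^k_{i,j}) = 0$. For the $y_{i,j}$, applying $\epsilon$ to $\Sigma\cdot\partial(Y\Sigma) = -(Y\Sigma)^2$ gives $\Sigma\,\epsilon(\partial(Y\Sigma)) = -d^2 = 0$. The delicate case is $p_l$ and $x_{i,j}$: I first apply $\epsilon$ to the identity of Proposition~\ref{prop:Pxy1} to get $\Sigma\,\epsilon(\partial P^{xy}_\beta) = f(t)\,d - d\,f(t)$, which vanishes because $f$ a chain map and $\partial t = 0$ force $\delta(f(t)) = d\,f(t) - f(t)\,d = 0$; thus $\epsilon(\partial P_{i,j}) = 0$ for every entry $P_{i,j}$ of $P^{xy}_\beta$. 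Now the path generated hypothesis lets me write each $p_l$ and $x_{i,j}$ as a word in the entries $P_{i,j}$ and the $t_m^{\pm1}$; expanding $\partial$ of such a word by the Leibniz rule, using $\partial t_m = 0$, and applying $\epsilon$ yields a sum each of whose terms carries a factor $\epsilon(\partial P_{i,j}) = 0$. Hence $\epsilon(\partial p_l) = \epsilon(\partial x_{i,j}) = 0$, so $\epsilon$ is an augmentation mapping to $(d,f)$, and surjectivity follows.
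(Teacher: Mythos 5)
Your proposal is correct and follows essentially the same route as the paper: well-definedness via the differential formulas of Proposition~\ref{prop:DiffProof} and Corollary~\ref{cor:diff}, injectivity by reading off $\epsilon$ on the $y_{i,j}$ and $a^k_{i,j}$ and invoking $\mathbb{F}$-path injectivity on $\mathcal{B}$, and surjectivity by building $\epsilon$ from $(d,f)$ and using Proposition~\ref{prop:Pxy1} plus the path-generated hypothesis to kill $\epsilon\circ\partial$ on the $p_l$ and $x_{i,j}$. The only cosmetic difference is that the paper packages the generator-level correspondence as a bijection $\Psi$ on all grading-preserving ring maps before imposing the augmentation/representation equations, whereas you verify injectivity and construct preimages directly; the substance is identical.
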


\begin{proof} 
  First, observe that $\epsilon \mapsto (d,f)$ as in (\ref{eq:themap})
  defines a surjection, $\Psi$, from the set,
  $\mathit{Ring}_m(\mathcal{A}(S(K,\beta)), \mathbb{F})$, of ring
  homomorphisms that preserve grading mod $m$ to the set of ordered
  pairs $(d,f)$ with $d:V_\beta \rightarrow V_\beta$ a strictly
  upper-triangular linear map of degree $+1$ mod $m$ and $f:
  \mathcal{A}(K) \rightarrow -\mathit{End}(V)$ a degree $0$ mod $m$
  ring homomorphism with $f(t) \in B_\beta$.  [To verify the statement
  about grading, note that
  \[
  d e_j = \sum_{i<j} \epsilon(y_{i,j}) (-1)^{\mu_j}e_i.
  \]
  Since $|y_{i,j}| = \mu_{i}-\mu_{j}-1$, when $\epsilon$ is
  $m$-graded, if $e_i$ appears with nonzero coefficient in $d e_j$ we
  must have $\mu_i-\mu_j -1 =0 \, \mbox{mod}\, m$.  This is equivalent
  to $|e_i| = |e_j| +1 \, \mbox{mod} \,m$.  Thus, $d$ has degree $+1$
  mod $m$ in $\mathit{End}(V_\beta)$.  Similarly, since $|a^k_{i,j}| =
  \mu_i -\mu_j + |a_k|$, the map $(\epsilon \circ f)(a_k)$ has degree
  $-|a_k|$ mod $m$ in $\mathit{End}(V)$ which translates to degree
  $|a_k|$ mod $m$ in $-\mathit{End}(V)$.  Finally, from Proposition
  \ref{prop:PathProp} (3), the $(i,j)$-entry of the path matrix
  $P^{xy}_\beta$ has degree $\mu_i-\mu_j$ in $\mathcal{A}(\beta)$, so
  $(\epsilon \circ f)(t) = \epsilon ( P^{xy}_\beta)$ has degree $0$
  mod $m$.  The statement about surjectivity follows from the
  definition of $B^m_\beta$, and since any matrix entries of $d$ or
  $f(a_k)$ that are allowed to be non-zero by the mod $m$ grading
  condition can be made arbitrary by the choice of $\epsilon$.]
  
  Next, since the values of a ring map $\epsilon \in
  \mathit{Ring}_m(\mathcal{A}(S(K,\beta)), \mathbb{F})$ on the
  generators $a^{k}_{i,j}$ (resp. $y_{i,j}$) are always uniquely
  determined by the matrix entries of $f(a_k)$ (resp. $d$), we see
  that $\Psi$ is also injective when $\beta$ is $\mathbb{F}$-path
  injective.

  \medskip

  \noindent{\bf Claim 1.}  If $\epsilon$ satisfies the augmentation
  equation, $\epsilon \circ \partial =0$, then the pair $(d,f)$
  satisfies $d^2=0$ and $f \circ \partial = \delta \circ f$.

  \medskip

  Using the formulas from Proposition~\ref{prop:DiffProof} and
  Corollary~\ref{cor:diff}, we observe the following three
  equivalences:
  \begin{equation} \label{eq:equiv1} (\epsilon\circ \partial)(Y) = 0
    \quad \Leftrightarrow \quad [\epsilon(Y\Sigma)]^2 =0 \quad
    \Leftrightarrow \quad d^2=0.
  \end{equation}  
  \begin{align} \label{eq:equiv2}
    (\epsilon \circ \partial)(A_k) = 0 \quad &\Leftrightarrow \quad  \epsilon\left(\Sigma\cdot\partial(A_k\Sigma)\right) = 0 \\
    &\Leftrightarrow \quad \epsilon \left(\Phi\circ\partial(a_k) - (Y\Sigma)\cdot \Phi(a_k)+(-1)^{\lvert a_k\rvert}\Phi(a_k) \cdot (Y\Sigma)\right) = 0 \notag \\
    &\Leftrightarrow \quad f\circ\partial(a_k)= \epsilon(Y\Sigma)\cdot f(a_k)-(-1)^{\lvert a_k\rvert}f(a_k) \cdot \epsilon(Y\Sigma)  \notag \\
    & \Leftrightarrow \quad f\circ \partial(a_k) = \delta \circ f
    (a_k) \notag
  \end{align}
  A similar sequence of equivalences establishes
  \begin{equation} \label{eq:equiv3} (\epsilon
    \circ \partial)(P^{xy}_\beta) =0 \quad \Leftrightarrow \quad
    f\circ \partial(t) = \delta \circ f (t).
  \end{equation}
  Thus, when $\epsilon \circ \partial =0$ holds, we see that $d^2=0$
  and the representation equation $f \circ \partial = \delta \circ f$
  holds when applied to any generator of $\mathcal{A}$.  This
  establishes Claim 1, which shows that the restriction of $\Psi$ to
  $\overline{\mbox{Aug}}_m(S(K,\beta), \mathbb{F})$ indeed has its
  image in $\bigsqcup_{d}\overline{\mbox{Rep}}_m(K, (V_\beta,d),
  B_\beta)$.

  The remaining statement about surjectivity follows from combining
  the surjectivity of $\Psi$ with the following:

  \medskip

  \noindent {\bf Claim 2.}  When $\beta$ is path generated, the
  converse to Claim 1 also holds.

  \medskip

  Supposing $\Psi(\epsilon) = (d,f)$ with $d^2=0$ and $\delta \circ f
  = f \circ \partial$, the equivalences (\ref{eq:equiv1}) and
  (\ref{eq:equiv2}) show that $\epsilon \circ \partial (z) =0$ for
  generators of the form $z=a^k_{i,j}$ and $z=y_{i,j}$; it is always
  the case that $\epsilon \circ \partial(t^{\pm1}_i) = 0$.  Finally,
  from the equivalence (\ref{eq:equiv3}) we have $\epsilon
  \circ \partial(P^{xy}_\beta)=0$; when $\beta$ is path generated this
  implies (with notation as in \ref{sec:pathaug}) $\epsilon
  \circ \partial|_{\mathcal{B}} = 0$ so that $\epsilon
  \circ \partial(z)=0$ holds for the remaining generators $z= x_{i,j}$
  and $z = p_i$.
\end{proof}

The following corollary generalizes a result of Sabloff from \cite{Sab1}.

\begin{corollary}  \label{cor:rotation}
Let $K \subset J^1\R$ be a (connected) Legendrian knot with base point $*$.  If the DGA $\mathcal{A}(K,*)$ has a $2$-graded representation on a DG-vector space $(V,0)$ with $V$ non-zero and concentrated in degree $0$, then $r(K) =0$.
\end{corollary}
\begin{proof}
Let $f \in \overline{\mbox{Rep}}_2(K, (V,0), GL(V))$ where $\dim V =n$, and fix an isomorphism $V \cong \mathbb{F}^n$ leading to $GL(V) \cong GL(n,\mathbb{F})$.  By Proposition \ref{prop:bruhat}, $f(t) \in B_\beta$ for some reduced positive permutation braid $\beta \in S_n$.  Then, equipping $\beta$ with the constant Maslov potential $\mu_\beta \equiv 0$, we have that $f \in \overline{\mbox{Rep}}_2(K, (V_\beta,0), B^2_\beta)$.  Since $\beta$ is path generated and $\mathbb{F}$-path injective (by Proposition \ref{prop:permutation}), Theorem \ref{thm:bijection} shows that $S(K, \beta)$ has a $2$-graded augmentation to $\mathbb{F}$.  Consequently, (see \cite[Theorem 1.1]{L1}) $S(K,\beta)$ has a $2$-graded normal ruling.  Then, a result of Sabloff, (\cite[Theorem 1.3]{Sab1} extended in a straight-forward manner to the multi-component case), shows that the sum of the rotation numbers of the components of $S(K,\beta)$ must be $0$.  This sum is $n \cdot r(K)$, so $r(K) =0$ follows.   
\end{proof}

To state a precise formula for augmentation numbers of satellites in
terms of representation numbers, we introduce some preliminary
notation.  Given an $n$-stranded positive braid $\beta \subset J^1
S^1$ equipped with a Maslov potential, $\mu_\beta$, let
$\left(q_l\right)_{l \in \Z}$ denote the degree distribution of the
$xz$-crossings of $\beta$, i.e. $q_l$ is the number of $p_i$
generators with $|p_i|=l$.  For $m\geq 0$, let $\lambda_m(\beta) =
\sigma_m((q_l))$, where we view $\sigma_m$ (defined in (\ref{eq:chi2})
or (\ref{eq:sigma11})-(\ref{eq:sigma13})), as a $\Z$-module
homomorphism $\Z^{-\infty,\infty} \rightarrow \Z$.  Note that when
$\mu_\beta$ is constant, $\lambda_m(\beta)$ is just
the length of $\beta$.

\begin{theorem} \label{thm:A} Suppose that $K\subset J^1\R$ is a
  (connected) Legendrian knot, and let $m \, | \, 2r(K)$ and
  $\mathbb{F}_q$ a finite field.  If $m$ is odd, then assume
  $\mathit{char}(\mathbb{F}_q)=2$; if $m$ is even, then assume $r(K) =
  0$.
	
  If $\beta \subset J^1S^1$ is path generated and $\mathbb{F}_q$-path
  injective, then we have
  \begin{equation} \label{eq:thmAug} \mbox{Aug}_m(S(K, \beta),q) =
    q^{-\lambda_m(\beta)/2} (q^{1/2}-q^{-1/2})^{-n} \cdot \sum_d
    \rrep_m(K, (V_\beta, d), B^m_\beta)
  \end{equation}
  where the sum is over all upper triangular differentials $d:V_\beta
  \rightarrow V_\beta$ with $\deg(d) = +1$ mod $m$.
\end{theorem}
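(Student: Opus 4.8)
The plan is to feed the bijection of Theorem~\ref{thm:bijection} into the definitions of the augmentation and reduced representation numbers, thereby reducing \eqref{eq:thmAug} to a single numerical identity about degree distributions, which I then verify by $\Z$-linearity. First, since $\beta$ is both $\mathbb{F}_q$-path injective and path generated, the map of Theorem~\ref{thm:bijection} is a bijection, so
\[
|\overline{\mbox{Aug}}_m(S(K,\beta), \mathbb{F}_q)| = \sum_d |\overline{\mbox{Rep}}_m(K, (V_\beta, d), B^m_\beta)|,
\]
where the sum runs over strictly upper triangular $d$ of degree $+1$ mod $m$. (Any augmentation to $\mathbb{F}_q$ sends invertible generators into $\mathbb{F}_q^*$, so this $\overline{\mbox{Aug}}_m$ is exactly the set underlying $\mbox{Aug}_m(S(K,\beta),q)$ with all $T_i = \mathbb{F}_q^*$.)

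Writing $\mathcal{B} = -\mathit{End}(V_\beta)$ with its induced differential $\delta_d$, I would unwind Definition~\ref{def:reducedRep} and \eqref{eq:numbers} for the connected knot $K$, which carries the single basepoint $*$ so that $\ell = 1$. The two factors of $|(\mathcal{B}^m_0)^*\cap\ker\delta_d|$ then cancel, leaving
\[
\rrep_m(K,(V_\beta,d), B^m_\beta) = |\mathcal{B}^m_0|^{-1/2}\,\Big(\lim_{N\to\infty}\prod_{|k|\le N} |\mathcal{B}^m_k|^{-\chi^k/2}\Big)\, |\overline{\mbox{Rep}}_m(K,(V_\beta,d), B^m_\beta)|,
\]
with $\chi^k$ the shifted Euler characteristic of $K$, depending only on the Reeb chords of $K$. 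The prefactor is independent of $d$, so summing over $d$ and applying the counting identity replaces $\sum_d|\overline{\mbox{Rep}}_m|$ by $|\overline{\mbox{Aug}}_m(S(K,\beta),\mathbb{F}_q)|$. On the other side, $\mbox{Aug}_m(S(K,\beta),q) = q^{-\sigma_m(S(K,\beta))/2}(q-1)^{-n}|\overline{\mbox{Aug}}_m(S(K,\beta),\mathbb{F}_q)|$, since $S(K,\beta)$ carries exactly the $n$ basepoints $*_1,\dots,*_n$. Using $(q^{1/2}-q^{-1/2})^{-n} = q^{n/2}(q-1)^{-n}$, the identity $|\mathcal{B}^m_k| = q^{D^m_k}$ with $D^m_k = |\{(i,j): \mu_i - \mu_j \equiv k \bmod m\}|$ from Section~\ref{sec:repDGVS}, and cancelling the common factor $(q-1)^{-n}|\overline{\mbox{Aug}}_m(S(K,\beta),\mathbb{F}_q)|$ (both sides vanish if it is zero), \eqref{eq:thmAug} becomes equivalent to
\[
\sigma_m(S(K,\beta)) = \lambda_m(\beta) - n + D^m_0 + E, \qquad E := \lim_{N\to\infty}\sum_{|k|\le N} D^m_k\,\chi^k.
\]

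To prove this last identity I would use that $\sigma_m$ is a $\Z$-linear functional of the degree distribution and split the Reeb chords of $S(K,\beta)$ into the four families $a^k_{i,j}$, $x_{i,j}$, $y_{i,j}$, $p_i$ of Section~\ref{sec:Matrix}. The crossings $p_i$ contribute $\lambda_m(\beta)$ by the definition of $\lambda_m$. For the dip chords, $|x_{i,j}| = \mu_i - \mu_j$ and $|y_{i,j}| = \mu_i - \mu_j - 1$ occur in pairs $e_{d-1}+e_d$ with $d = \mu_i - \mu_j$, and since $\sigma_m(e_{d-1}+e_d)$ equals $2$ when $d\equiv 0 \bmod m$ and $0$ otherwise, their total is $2\,|\{i<j : \mu_i \equiv \mu_j \bmod m\}| = D^m_0 - n$ (the $-n$ accounting for the diagonal pairs $i=j$ in $D^m_0$). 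The remaining task is to match the lattice contribution $\sum_{k,i,j}\sigma_m(e_{|a_k| + \mu_i - \mu_j})$ with $E$, for which I would use the shift relation $\eta^k(c+\delta) = \eta^{k-\delta}(c)$ together with the symmetry of the index set under $(i,j)\leftrightarrow (j,i)$.

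The main obstacle is precisely this lattice matching. The quantity $E$ is only conditionally convergent, and the naive per-residue pieces $\lim_N\sum_{|k|\le N,\,k\equiv r}\eta^k(c)$ diverge when $m$ is even; only after symmetrizing in $k\leftrightarrow -k$ (equivalently pairing $(i,j)$ with $(j,i)$, using $D^m_k = D^m_{-k}$ and $\chi^{-k} = -\chi^k$ for large $|k|$) do the sums converge, and the shift relation then identifies them with the corresponding values of $\sigma_m$. I would make this rigorous in the style of Lemma~\ref{lem:odd}: realize both $\sigma_m(S(K,\beta))$ and $\lambda_m(\beta) - n + D^m_0 + E$ as $\Z$-module homomorphisms $\Z^{-\infty,\infty}\to\Z$ applied to the appropriate degree-distribution vectors, and check their agreement on the basis $\{e_0\}\cup\{e_{i-1}+e_i \mid i\in\Z\}$, where all the regularized sums reduce to finite computations.
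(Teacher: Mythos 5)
Your proposal is correct and follows essentially the same route as the paper: the bijection of Theorem~\ref{thm:bijection} together with the normalization bookkeeping reduces \eqref{eq:thmAug} to exactly the degree-distribution identity $\sigma_m(S(K,\beta)) = \lambda_m(\beta) + (D^m_0 - n) + E$ in your notation, which is precisely the content of Lemma~\ref{lem:Xa} (your three terms are the paper's $X_p$, $X_{x,y}$, and $X_a$). The only divergence is in the hardest piece, $X_a = E$: the paper first proves the exact convolution formula of Lemma~\ref{lem:formulaA} and then handles the conditional convergence by an explicit reindexed, symmetrized sum, whereas you verify the agreement of two regularized $\Z$-linear functionals on the basis $\{e_0\}\cup\{e_{i-1}+e_i\}$ in the style of Lemma~\ref{lem:odd} --- a legitimate and arguably cleaner variant, since on that basis the regularized sums do reduce to the finite computations you describe.
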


\begin{proof}
  From the definition, we have
  \begin{equation} \label{eq:proofEq1} \mbox{Aug}_m(S(K, \beta),q) =
    q^{-\sigma_m(S(K,\beta))/2}(q-1)^{-n}\,
    |\overline{\mbox{Aug}}_m(S(K, \beta),\mathbb{F}_q)|.
  \end{equation}
  Write 
	\[
	\sigma_m(S(K,\beta)) = X_a + X_{x,y} + X_p
	\]
	where the $3$
  terms are the contribution to $\sigma_m(S(K,\beta))$ (as defined in
  equation (\ref{eq:chi2})) from the degree distribution of Reeb
  chords of the form $a^k_{i,j}$; $x_{i,j}$ or $y_{i,j}$; and $p_i$
  respectively. 
	
  \begin{lemma} \label{lem:Xa} Let $\mathcal{B}=-\mathit{End}(V_\beta)$.
    The following identities hold:
    \begin{enumerate}
    \item $\displaystyle q^{-X_a/2} = \lim_{N \rightarrow +\infty}
      \prod_{k \in \Z, \lvert k\rvert\leq N}
      \lvert\mathcal{B}^m_k\rvert^{-(\chi^k/2)}$.
    \item $\displaystyle X_{x,y} = \sum_{k \in \Z/m} 2
      \left( \begin{array}{c} \mathbf{n}(k) \\ 2 \end{array} \right)$
      where $\mathbf{n}: \Z/m \rightarrow \Z_{\geq 0}$ is the
      $\Z/m$-graded dimension of $V_\beta$, i.e. $\mathbf{n}(k) =
      \left|\left\{ e_i \, |\, |e_i| = k \, \mbox{mod} \, m\right\}
      \right|$.
    \item $X_p = \lambda_m(\beta)$.
    \end{enumerate}
  \end{lemma}
	
  \begin{proof}[Proof of Lemma \ref{lem:Xa}]
    {\bf (1):} Notice that as $\lvert\mathcal{B}^m_k\rvert = q^{\dim
      \mathcal{B}^m_k}$ we have
    \[
    \prod_{k \in \Z, \lvert k\rvert\leq N}
    \lvert\mathcal{B}^m_k\rvert^{-(\chi^k/2)} = q^{-\left( \sum_{k \in
          \Z, \lvert k\rvert\leq N} (\dim \mathcal{B}^m_k)
        \chi^k\right)/2},
    \]
    so it suffices to show that
    \begin{equation} \label{eq:Xaequiv} X_a = \lim_{N \rightarrow
        +\infty} \sum_{k \in \Z, \lvert k\rvert\leq N} (\dim
      \mathcal{B}^m_k) \chi^k.
    \end{equation}
	
    Fix $M \gg 0$ so that
    \begin{equation} \label{eq:defM} \chi^{-k} = -\chi^{k} \quad
      \mbox{and} \quad \mathcal{B}_k = \mathcal{B}_{-k} = \{0\} \quad
      \mbox{whenever $|k| \geq M$.}
    \end{equation}
    Then, using that $\dim \mathcal{B}^m_{k} = \dim
    \mathcal{B}^m_{-k}$ holds for all $k$, we have that
    \[
    B:= \lim_{N \rightarrow +\infty} \sum_{k \in \Z, \lvert
      k\rvert\leq N} (\dim \mathcal{B}^m_k) \chi^k = \sum_{\lvert
      k\rvert\leq M} (\dim \mathcal{B}^m_k) \chi^k.
    \]
		
    \begin{lemma} \label{lem:formulaA} For $b \in \Z$, let
      $\chi^b_{a}$ denote the contribution to $\chi^b(S(K,\beta))$
      from generators of the form $a^k_{i,j}$.  Then,
      \[
      \chi^b_{a} = \sum_{k \in \Z} \left(\dim
        \mathcal{B}_k\right)\chi^{b+k}(K)
      \]
      where the sum is over the finitely many terms with
      $\mathcal{B}_k \neq \{0\}$.
    \end{lemma}

\begin{proof}[Proof of Lemma \ref{lem:formulaA}]  
  Let $r_l$ (resp. $s_l$) denote the number of degree $l$ Reeb chords
  of $K$ (resp. of $S(K,\beta)$ of the form $a^k_{i,j}$), and set
  \[
  m_k = \left\lvert\{(i,j) \,|\, 1\leq i,j \leq n, \,\, \mu_j-\mu_i =
    k\}\right\rvert.
  \]
  Note that $s_p = \sum_{k \in \Z} m_k r_{p+k}$, so we can compute
  \begin{align*}
    \chi^b_a =& \sum_{l \geq 0} (-1)^l s_{b+l} + \sum_{l<0} (-1)^{l+1}s_{b+l} \\
    = &  \sum_{l \geq 0} (-1)^l \left(\sum_{k \in \Z} m_kr_{b+l+k}\right) + \sum_{l<0} (-1)^{l+1}\left(\sum_{k \in \Z} m_kr_{b+l+k}\right) \\
    = & \sum_{k \in \Z} m_k \cdot\left(\sum_{l \geq 0} (-1)^l r_{b+l+k} + \sum_{l<0} (-1)^{l+1}r_{b+l+k}\right) \\
    = & \sum_{k \in \Z} (\dim \mathcal{B}_k) \cdot \chi^{b+k}(K).
  \end{align*}
\end{proof}
  
Returning now to the proof of (1) in Lemma \ref{lem:Xa}, if $m=0$ we
have $X_a = \chi_a^0$ and $\mathcal{B}^m_k = \mathcal{B}_k$, so the
required equality (\ref{eq:Xaequiv}) is simply Lemma
\ref{lem:formulaA}.

In the case where $m \neq 0$, we fix $N \gg M$ to be large enough so
that the following hold:
\begin{enumerate}
\item[(a)] $X_a = \chi^0_a + \sum_{c=1}^N\left( \chi^{cm}_a +
    \chi^{-cm}_a \right)$, (see equation (\ref{eq:chi2})).
\item[(b)] For all $l$ with $|l| \leq M$, if $\mathcal{B}_k \neq 0$,
  then $k \in [ l-Nm, l+Nm]$.
\end{enumerate}
We show that $X_a - \chi^0_a = B- \chi^0_a$ by calculating as follows:
{\allowdisplaybreaks\begin{flalign*}
    && X_a - \chi^0_a = & \sum_{c=1}^N \left(\chi^{cm}_a + \chi^{-cm}_a \right) \\
    & \mbox{(Lemma \ref{lem:formulaA})} \quad &  = & \sum_{c=1}^N \left( \sum_{|k| \leq M} (\dim \mathcal{B}_k) \chi^{cm+k} +  \sum_{|k| \leq M} (\dim \mathcal{B}_k) \chi^{-cm+k} \right) \\
    & \mbox{(Replace $k$ with $-k$ in 2nd sum)} \quad & = & \sum_{c=1}^N \left( \sum_{|k| \leq M} (\dim \mathcal{B}_k) \chi^{cm+k} +  \sum_{|k| \leq M} (\dim \mathcal{B}_{-k}) \chi^{-(cm+k)} \right) \\
    & \mbox{(Use (B2))} \quad & = & \sum_{c=1}^N  \left(\sum_{ k \in \Z} \dim \mathcal{B}_k \left( \chi^{cm+k}+ \chi^{-(cm+k)} \right) \right) \\
    & \mbox{(Reindex $l=cm+k$)} \quad & = & \sum_{c=1}^N  \left(\sum_{l \in \Z} \dim \mathcal{B}_{l-cm} \left( \chi^{l}+ \chi^{-l} \right) \right) \\
    & \mbox{(Use (\ref{eq:defM}))} \quad & = & \sum_{c=1}^N  \left(\sum_{|l| \leq M} \dim \mathcal{B}_{l-cm} \left( \chi^{l}+ \chi^{-l} \right) \right) \\
    & \quad & = &  \sum_{|l| \leq M} \left( \chi^{l}+ \chi^{-l} \right) \sum_{c=1}^N \dim \mathcal{B}_{l-cm}  \\
    & \mbox{(Replace $l$ with $-l$ in 2nd sum)} & = &  \sum_{|l| \leq M} \chi^{l}\left(\sum_{c=1}^N \dim \mathcal{B}_{l-cm}\right) + \sum_{|l| \leq M} \chi^{l}\left(\sum_{c=1}^N \dim \mathcal{B}_{-l-cm}\right)  \\
    & \mbox{(Use (B2))} & = &  \sum_{|l| \leq M} \chi^{l}  \left( \sum_{c=1}^N \left(\dim \mathcal{B}_{l-cm} + \dim \mathcal{B}_{l+cm}\right) \right) \\
    & \mbox{(Use (b))} & = &  \sum_{|l| \leq M} \chi^{l} \cdot \left(\dim \mathcal{B}^m_{l} - \dim \mathcal{B}_{l}\right)  \\
    & \mbox{(Lemma \ref{lem:formulaA})} \quad & = & B- \chi^0_a.
\end{flalign*}}
  
\medskip
  
  \noindent {\bf (2):} 
  Next, since $\vert x_{i,j}\rvert=\lvert y_{i,j}\rvert+1$, these pairs of generators
  cancel in all $\chi^l$ with $l=0 \, \mbox{mod} \, m$, except when
  \[
  \lvert x_{i,j}\rvert = 0 \, \mbox{mod} \, m \quad \Leftrightarrow \quad i<j
  \mbox{ and } \mu(i) = \mu(j) \, \mbox{mod}\, m,
  \]
  in which case there is a single such $l$ where the contribution from
  $x_{i,j}$ and $y_{i,j}$ to $\chi^l$ is $2$ rather than $0$.  As the
  $\Z/m$-graded dimension of $V_\beta$ satisfies $\mathbf{n}(k)= |\{i
  \,|\, \mu(i) = k \, \mbox{mod} \, m\}|$, it follows that
  \[
  X_{x,y} = \sum_{k \in \Z/m} 2 \left(\begin{array}{c} \mathbf{n}(k)
      \\ 2 \end{array}\right).
  \]
  
  \medskip
  
  \noindent {\bf (3):} Note that $X_p = \lambda_m(\beta)$ by
  definition.
  
\end{proof}
	
With $q^{-\sigma_m(S(K,\beta))/2}$ evaluated using Lemma~\ref{lem:Xa}
and making use of the bijection from Theorem~\ref{thm:bijection},
equation \eqref{eq:proofEq1} leads to
\begin{align}
  \mbox{Aug}_m(S(K, \beta),q) =& q^{-\lambda_m(\beta)/2}q^{-\sum_{k
      \in \Z/m} (\mathbf{n}(k)^2-\mathbf{n}(k))/2}
  (q-1)^{-n}\notag \\
  & \quad \quad \times\sum_{d}\left(\lim_{N \rightarrow +\infty}
    \prod_{k \in \Z, \lvert k\rvert\leq N}
    \lvert\mathcal{B}^m_k\rvert^{-(\chi^k/2)}\right)\cdot \left\lvert\overline{\mbox{Rep}}_m(K, (V_\beta,d), B_\beta)\right\rvert \notag \\
  =& q^{-\lambda_m(\beta)/2}q^{-\sum_{k \in \Z/m}
    (\mathbf{n}(k)^2-\mathbf{n}(k))/2} (q-1)^{-n} \sum_{d}
  |\mathcal{B}^m_0|^{1/2} \cdot \rrep_m(K, (V_\beta,d), B_\beta),
  \notag \label{eq:ProofLine3}
\end{align}
since, by Definition~\ref{def:repNum} and \ref{def:reducedRep},
\begin{align*}
  \widetilde{\mbox{Rep}}_m(K,(\mathcal{B},\delta),\mathbb{T})=&\lvert\mathcal{B}^m_0\rvert^{-1/2}\lvert(\mathcal{B}_0^m)^*\cap\ker\delta\rvert\\
  &\quad\times\left(\lim_{N \rightarrow +\infty} \prod_{k \in \Z,
      \lvert k\rvert\leq N} \lvert\mathcal{B}^m_k\rvert^{-(\chi^k/2)}
  \right)\cdot \lvert(\mathcal{B}^m_0)^* \cap \ker
  \delta\rvert^{-\ell} \cdot \lvert\overline{\mbox{Rep}}_m(K,
  (\mathcal{B},\delta), \mathbf{T})\rvert,
\end{align*}
where $\ell=1$ since we are considering the original knot $K$. Recall
from Section~\ref{sec:repDGVS} that ${|\mathcal{B}^m_0|^{1/2} =
q^{\sum_{k \in \Z/m} \mathbf{n}(k)^2/2}}$, so this becomes
\begin{align*}
  & q^{-\lambda_m(\beta)/2} q^{\sum_{k \in \Z} \mathbf{n}(k)/2}
  (q-1)^{-n} \sum_{d} \rrep_m(K, (V_\beta,d), B_\beta) \\
  &\quad= q^{-\lambda_m(\beta)/2} (q^{1/2}-q^{-1/2})^{-n} \sum_{d}
  \rrep_m(K, (V_\beta,d), B_\beta).
\end{align*}
\end{proof}

\begin{example}\label{ex:trefoil}
  We will consider the satellite of the right-handed trefoil with the
  basic front $A_2$ (as in Figure \ref{fig:Basic}) and show by direct
  computation that the equation in Theorem~\ref{thm:A} holds for $q=2$
  when $m=0$.

  Let $K$ be the Legendrian right-handed trefoil with crossings
  labeled as in Figure~\ref{fig:trefoil}. We have $\lvert
  a_1\rvert=\lvert a_2\rvert=\lvert a_3\rvert=0$, $\lvert
  a_4\rvert=\lvert a_5\rvert=1$, and $\lvert t\vert=0$ with
  differential
  \begin{align*}
    &\partial a_4=t^{-1}+a_1+a_3+a_1a_2a_3\\
    &\partial a_5=1-a_1-a_3-a_3a_2a_1\\
    &\partial a_1=\partial a_2=\partial a_3=0
  \end{align*}

  Let $\beta=A_2$ with both strands oriented to the right. Label the
  crossing in $\beta$ by $p$ and for ease of notation set $x=x_{12}$
  and $y=y_{12}$. If we set $\mu_1=0=\mu_2$ (and thus $\Sigma=I$),
  then $\lvert p\rvert=0$. The differential satisfies $\partial
  p=-t_1^{-1}yt_2$ and the path matrix is
  \[P^{xy}_\beta=\Delta\begin{pmatrix}p&px+1\\1&x\end{pmatrix},\]
  where $\Delta$ is the matrix with $t_1$ and $t_2$ along the
  diagonal.

  Since $\mu_1=\mu_2$, in $S(K,\beta)$, we have $\lvert
  a^1_{ij}\rvert=\lvert a^2_{ij}\rvert=\lvert a^3_{ij}\rvert=0$,
  $\lvert a^4_{ij}\rvert=\lvert a^5_{ij}\rvert=1$, $\lvert
  p\rvert=\lvert x\rvert=0$, $\lvert y\rvert=-1$, and $\lvert
  t_1\rvert=\lvert t_2\rvert=0$. The definition of the differential
  and Proposition~\ref{prop:DiffProof} give us that the differential
  satisfies
  \begin{align*}
    &\partial p=-t_1^{-1}yt_2\\
    &\partial X=(I+X)Y-\Delta^{-1}Y\Delta(I+X)\\
    &\partial Y=0\\
    &\partial A_k=-YA_k+A_kY,\quad k=1,2,3\\
    &\partial A_4=(P^{xy}_\beta)^{-1}+A_1+A_3+A_1A_2A_3-YA_4-A_4Y\\
    &\partial A_5=I-A_1-A_3-A_3A_2A_1-YA_5-A_5Y
  \end{align*}
  Between the definition of the differential and the gradings of the
  generators, all $0$-graded augmentations
  $\epsilon:\mathcal{A}(S(K,\beta),\partial)\to\mathbb{F}_q$ with
  $q=2$ satisfy
  \begin{align*}
    &\epsilon(t_1)=\epsilon(t_2)=1\\
    &\epsilon(Y)=\epsilon(A_4)=\epsilon(A_5)=0\\
    &0=\epsilon((P^{xy}_\beta)^{-1}+A_1+A_3+A_1A_2A_3)\\
    &0=\epsilon(I-A_1-A_3-A_3A_2A_1)
  \end{align*}

\begin{figure}[t]
  \begin{centering}
    \labellist
    \small
    \pinlabel $a_1$ [b] at 60 229
    \pinlabel $a_2$ [b] at 262 227
    \pinlabel $a_3$ [b] at 452 228
    \pinlabel $a_4$ [b] at 548 322
    \pinlabel $a_5$ [b] at 548 129
    \pinlabel $t$ [l] at 708 310
    \endlabellist
    \includegraphics[width=2.5in]{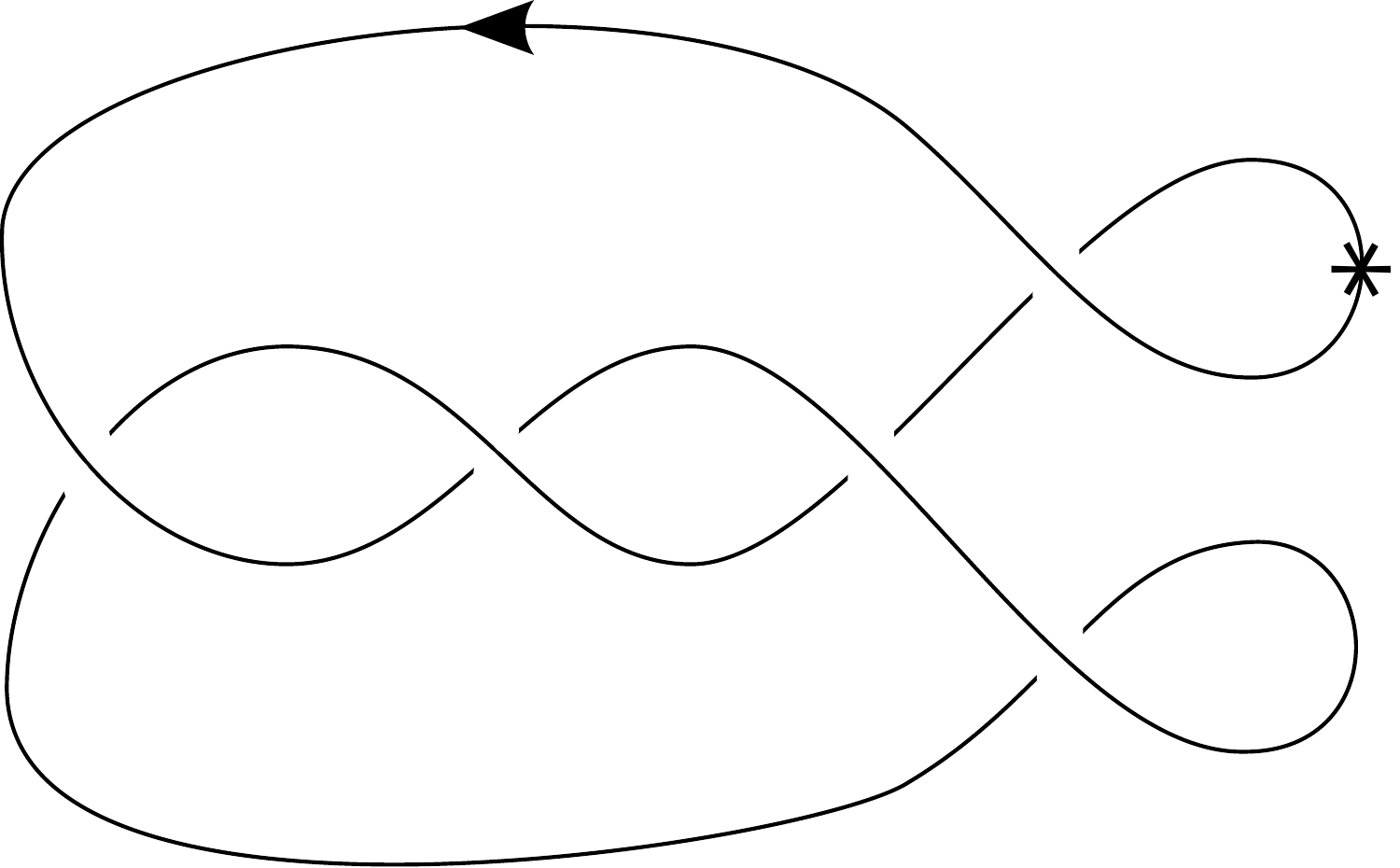} 
    \caption{A right-handed Legendrian trefoil.}
    \label{fig:trefoil}
  \end{centering}
\end{figure}

  With Mathematica one can check that
  \[\lvert\overline{\mbox{Aug}}_0(S(K,\beta),\mathbb{F}_2)\rvert=146.\]
  Recall from \eqref{eq:sigma11} that $\sigma_0=\chi^0$, where
    $\chi^0$ is the shifted Euler characteristic for
    $S(K,\beta$). As the
  shifted Euler characteristic of $S(K,\beta)$ is
  $\chi^0=(-1)^{-1+1}1+(-1)^{0}14+(-1)^{1}8=7$, by definition, the
  augmentation number for $q=2$ of $S(K,\beta)$ is
  \[\mbox{Aug}_0(S(K,\beta),2)=2^{-\sigma_0/2}(2-1)^{-\ell}\lvert\overline{\mbox{Aug}}_0(S(K,\beta),\mathbb{F}_2)\rvert=\frac{73\sqrt2}8.\]

  To compute the right hand side of the equation in
  Theorem~\ref{thm:A}, we first compute
  $\lvert\overline{\mbox{Rep}}_0(K,(V_\beta,d),B_\beta^0)\rvert$ for
  all upper triangular differentials $d:V_\beta\to V_\beta$ with
  $\deg(d)=+1$. Recall that $V_\beta=\mbox{Span}(e_1,e_2)$ where
  $\lvert e_i\rvert=\mu_i=0$. Thus, the only such differential is
  $d=0$.

  First, find the $0$-graded path subset
  \[B_\beta^0=\left\{\alpha(P^{xy}_\beta)\big\vert\,0-\text{graded }\alpha\in
  Ring(\mathcal{B},\mathbb{F}_2)\right\}.\] This amounts to finding for
  which matrices $A\in GL(2,\mathbb{F}_2)$ we can solve
  \[\alpha\begin{pmatrix}p&px+1\\1&x\end{pmatrix}=A\]
  for $\alpha(p)$ and $\alpha(x)$ (since
  $\alpha(t_1)=\alpha(t_2)=1$). Thus
  \[B_\beta^0=\left\{\begin{pmatrix}0&1\\1&0\end{pmatrix},\begin{pmatrix}1&1\\1&0\end{pmatrix},\begin{pmatrix}1&0\\1&1\end{pmatrix},\begin{pmatrix}0&1\\1&1\end{pmatrix}\right\}\subset GL(2,\mathbb{F}_2).\]

  Since $d=0$ on $V_\beta$ induces the differential $\delta=0$ on
  $-\mbox{End}(V_\beta)$ and by definition
  \[\overline{\mbox{Rep}}_0(K,(V_\beta,0),B_\beta^0)=\left\{0-\text{graded
  }f:(\mathcal{A}(K),\partial)\to(-\mbox{End}(V_\beta),\delta)\text{
    s.t.  }f\circ\partial=\delta\circ f, f(t)\in B_\beta^0\right\},\]
  $\lvert\overline{\mbox{Rep}}_0(K,(V_\beta,0),B_\beta^0)\rvert$ is
  the number of $0$-graded
  $f:(\mathcal{A}(K),\partial)\to(-\mbox{End}(V_\beta),\delta)$ such
  that
  \begin{align*}
    &0=\delta\circ f(a_4)=f\circ\partial(a_4)=f(t^{-1}+a_1+a_3+a_1a_2a_3)\\
    &0=\delta\circ f(a_5)=f\circ\partial(a_5)=f(1-a_1-a_3-a_3a_2a_1)\\
    &f(t)\in\left(B_\beta^0\right)^*
  \end{align*}
  Using Mathematica, one can check that there are $40$ such $f$ if
  $f(t)=(\begin{smallmatrix}0&1\\1&0\end{smallmatrix})$, $33$ if
  $f(t)=(\begin{smallmatrix}1&1\\1&0\end{smallmatrix})$, $40$ if
  $f(t)=(\begin{smallmatrix}1&0\\1&1\end{smallmatrix})$, and $33$ if
  $f(t)=(\begin{smallmatrix}0&1\\1&1\end{smallmatrix})$. Thus
  $\lvert\overline{\mbox{Rep}}_0(K,(V_\beta,d),B_\beta^0)\rvert=146$. Notice
  that this matches Theorem~\ref{thm:bijection} which tells us
  $\lvert\overline{\mbox{Aug}}_0(S(K,\beta),\mathbb{F}_2)\rvert=\lvert\overline{\mbox{Rep}}_0(K,(V_\beta,d),B_\beta^0)\rvert$.

  Now we have the $0$-graded representation number of $K$ for $q=2$ is
  \begin{align*}
    &\mbox{Rep}_0(K,(V_\beta,0),B_\beta^0)\\
    &\quad=\left(\lim_{N \rightarrow +\infty} \prod_{k \in \Z, \lvert
        k\rvert\leq N}
      \lvert(-\mbox{End}(V_\beta))^0_k\rvert^{-(\chi^k/2)}
    \right)\cdot\left\lvert(-\mbox{End}(V_\beta))_0^* \cap \ker
    \delta\right\rvert^{-\ell} \cdot
    \left\lvert\overline{\mbox{Rep}}_0(K, (V_\beta,0),B_\beta^*)\right\rvert\\
    &\quad=\lvert(-\mbox{End}(V_\beta))_0\rvert^{-\chi^0/2}\cdot\lvert(-\mbox{End}(V_\beta))_0^*\rvert^{-1}\cdot 146\text{ since }\delta=0\text{ and }(-\mbox{End}(V_\beta))_k^0=(-\mbox{End}(V_\beta))_k\neq0\text{ iff }k=0\\
    &\quad=\left(2^4\right)^{-\frac12}\cdot6^{-1}\cdot
    146\text{ since }(-\mbox{End}(V_\beta))_0=\mbox{End}(V_\beta)\text{ and }\chi^0=1\\
    &\quad=\frac{73}{12}
  \end{align*}
  and thus the $0$-graded reduced representation number for $q=2$ is
  \begin{align*}
    \widetilde{\mbox{Rep}}_0(K,(V_\beta,0),B_\beta^0)&=\left\lvert(-\mbox{End}(V_\beta))_0\right\rvert^{-\frac12}\left\lvert(-\mbox{End}(V_\beta))_0^*\cap\ker\delta\right\rvert\cdot \mbox{Rep}_0(K,(V_\beta,0),B_\beta^0)\\
    &=\left(2^4\right)^{-\frac12}\left\lvert(-\mbox{End}(V_\beta))_0^*\right\rvert\cdot\frac{73}{12}=\frac14\cdot6\cdot\frac{73}{12}=\frac{73}{8}.
  \end{align*}
  Finally, we see that the right hand side of the equation in
  Theorem~\ref{thm:A} is
  \[2^{-\lambda_0(\beta)/2}\left(\sqrt2-\frac1{\sqrt2}\right)^{-2}\cdot\sum_d\widetilde{\mbox{Rep}}_0(K,(V_\beta,d),B_\beta^0)=2^{-\frac12}\cdot2\cdot\frac{73}{8}=\frac{73\sqrt2}8\]
  as desired, where the sum is over all upper triangular differentials
  $d:V_\beta\to V_\beta$ with $\deg(d)=+1$.
\end{example}

\subsection{Satellite ruling polynomials via representation numbers}

For any fixed Legendrian link, $L \subset J^1S^1$, equipped with a
$\Z/m$-valued Maslov potential, the $m$-graded ruling polynomial, $K
\mapsto R^m_{S(K,L)}(z)$, provides a Legendrian isotopy invariant of
(connected) Legendrian knots $K\subset J^1\R$ with $m \,|\, 2r(K)$.
Note that a choice of $\Z/m$-valued Maslov potential on $K$ should be
made to equip $S(K,L)$ with a $\Z/m$-valued Maslov potential, but
$R^m_{S(K,L)}(z)$ is independent of this choice.  We refer to these
invariants as {\bf satellite ruling polynomials}.  Note that in the
case that $L= \beta$ is a positive permutation braid (and $r(K)=0$ if
$m$ is even), the combination of Theorems \ref{thm:HenryR} and
\ref{thm:A} gives the formula
\[
R^m_{S(K,\beta)}(z)\big\vert_{z=q^{1/2}-q^{-1/2}}=q^{-\lambda_m(\beta)/2}(q^{1/2}-q^{-1/2})^{-n}\sum_{d}\rrep_m(K,(V_\beta,d),
B^m_\beta)
\]
as stated in Theorem A from the Introduction.  (Sum over strictly
upper triangular differentials, $d$, of degree $+1$ mod $m$.)

\begin{example}
  In Example~\ref{ex:trefoil}, we saw that if $q=2$, $K$ is the
  right-handed trefoil in Figure~\ref{fig:trefoil}, and $\beta=A_2$,
  then
  \[q^{-\lambda_m(\beta)/2}(q^{1/2}-q^{-1/2})^{-n}\sum_{d}\rrep_m(K,
  (V_\beta,d), B^m_\beta)=\frac{73\sqrt2}8.\] Using a
  \textit{Mathematica} program written by J. Sabloff, we see that the
  $0$-graded ruling polynomial for $S(K,\beta)$ is
  $R^0_{S(K,\beta)}(z)=3z^{-1}+9z+6z^3+z^5$ and so
  \[R^0_{S(K,\beta)}(z)\big\vert_{z=q^{1/2}-q^{-1/2}}=\frac{73\sqrt2}8,\]
  as stated in Theorem~A.
\end{example}

The following corollary shows that the Chekanov-Eliashberg algebra
{\it of the original knot $K$} already contains all information about
satellite ruling polynomials of $K$.

\begin{corollary} \label{cor:A} Fix $m \geq 0$, and consider the class
  of Legendrian knots with $r(K) =0$ if $m$ is even, and $m\,|\,
  2r(K)$ if $m$ is odd.  For any fixed $L \subset J^1S^1$ equipped
  with a $\Z/m$-valued Maslov potential, the satellite ruling
  polynomial invariant
  \[
  K \mapsto R^m_{S(K,L)}(z)
  \]
  is determined by $L$ and the Chekanov-Eliashberg algebra of $K$.  
	\end{corollary}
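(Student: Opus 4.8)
The plan is to transfer the ruling-polynomial skein relations (i)--(iii) from the pattern $L \subset J^1S^1$ to the satellite $S(K,L) \subset J^1\R$, use the reduction of $L$ to products of basic fronts, and then invoke Theorem \ref{thm:A}. The starting observation is that the Legendrian satellite operation $S(K,-)$ is local: away from a small neighborhood of the basepoint $* \in K$, the diagram of $S(K,L)$ consists of $n$ parallel copies of $K$ together with the crossing lattices, none of which is affected by a modification of $L$ inside that neighborhood. Consequently, if patterns $L', L'' \subset J^1S^1$ differ by a single skein move of type (i), (ii), or (iii), then $S(K,L')$ and $S(K,L'')$ differ by the corresponding skein move performed at the image crossing or cusp in $J^1\R$. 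First I would verify this locality in the front projection, using the combinatorial description of $S(K,\beta)$ from \cite{NgTraynor}: a crossing of the pattern appears as a single crossing of the satellite, a cusp as a single cusp, and a disjoint contractible unknot factor (which winds zero times around $S^1$) as a disjoint Legendrian unknot in $S(K,L)$. One must also check that the coefficients $\delta_1,\delta_2$ in relation (i) are unchanged: the two strands meeting at a crossing of $L$ lift to two parallel copies of $K$ whose Maslov potentials $\mu = \mu_K + \mu_\beta$ differ precisely by the difference of the pattern Maslov potentials, so the equal-Maslov-potential condition defining $\delta_i$ is preserved mod $m$. Granting this, the invariant $K \mapsto R^m_{S(K,L)}(z)$ obeys relations (i)--(iii) as a function of the pattern $L$.

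Next I would apply the structural reduction. By the extension of Lemma 6.10 of \cite{R1} to $J^1S^1$ recalled in Section \ref{sec:background}, relations (i)--(iii) express any $L \subset J^1S^1$ as a $\Z[z^{\pm1}]$-linear combination of products of the basic fronts $A_{m_i}$ (equipped with some $\Z/m$-valued Maslov potentials), where the coefficients depend only on $L$ and $m$. Since each reduction step on $L$ transfers, by the argument above, to the corresponding identity for satellites, induction along the reduction gives $R^m_{S(K,L)}(z) = \sum_j c_j(z)\, R^m_{S(K,\beta_j)}(z)$, where each $\beta_j$ is a product of basic fronts -- hence a reduced positive permutation braid -- and $c_j(z) \in \Z[z^{\pm1}]$ depends only on $L$ and $m$.

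Finally I would invoke Theorem \ref{thm:A}. As each $\beta_j$ is a reduced positive permutation braid on, say, $n_j$ strands, Proposition \ref{prop:permutation} shows it is $\mathbb{F}_q$-path injective and path generated for every prime power $q$, so Theorem \ref{thm:A} gives
\[
R^m_{S(K,\beta_j)}(z)\big\vert_{z = q^{1/2}-q^{-1/2}} = q^{-\lambda_m(\beta_j)/2}(q^{1/2}-q^{-1/2})^{-n_j}\sum_d \rrep_m(K,(V_{\beta_j},d),B^m_{\beta_j}).
\]
By Proposition \ref{prop:Invariance}, the reduced representation numbers on the right are determined by the stable tame isomorphism class of $(\mathcal{A}(K),\partial)$ together with the data $(V_{\beta_j},d,B^m_{\beta_j})$ read off from $\beta_j$; hence the Chekanov-Eliashberg algebra of $K$ (together with $L$) determines the value of the Laurent polynomial $R^m_{S(K,\beta_j)}(z)$ at $z = q^{1/2}-q^{-1/2}$ for every prime power $q$. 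Since $\{\,q^{1/2}-q^{-1/2} : q \text{ a prime power}\,\}$ is an infinite subset of $\R$, these specializations determine the Laurent polynomial $R^m_{S(K,\beta_j)}(z)$ itself. Combining with the decomposition from the previous step yields that $R^m_{S(K,L)}(z)$ is determined by $L$ and $(\mathcal{A}(K),\partial)$, as claimed.

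I expect the main obstacle to be the locality verification in the first step: establishing rigorously that each skein move on the pattern induces exactly the corresponding skein move on the satellite, with the correct $\delta_i$ coefficients. This requires careful bookkeeping of Maslov potentials and orientations under the satellite construction and confirming that relation (iii) transfers correctly for the zero-winding unknot factor. The reduction to products of basic fronts and the polynomial-interpolation argument are comparatively routine once this locality is in hand.
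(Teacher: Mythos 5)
Your proposal is correct and follows essentially the same route as the paper: transfer the skein relations (i)--(iii) from the pattern to the satellite, reduce $L$ to a $\Z[z^{\pm1}]$-combination of products of basic fronts (which are reduced positive permutation braids), apply Theorem A at $z=q^{1/2}-q^{-1/2}$ for all prime powers $q$, and interpolate the Laurent polynomial. The paper simply asserts the locality/skein-transfer step that you flag as the main obstacle, and otherwise your argument matches its proof.
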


\begin{proof} The satellite ruling polynomials $R^m_{S(K,L)}(z)$
  satisfy the ruling polynomial skein relations with respect to the
  pattern $L$.  As discussed in Section \ref{sec:background}, after
  applying the skein relations $L$ can be written as a
  $\Z[z^{\pm1}]$-linear combination of products of the basic fronts
  $A_m$ (equipped with arbitrary $\Z/m$-valued Maslov potentials).
  Any such product is a positive permutation braid with reduced braid
  word, and so we can write
  \[
  R^m_{S(K,L)}(z) = \sum_\beta c_\beta R^m_{S(K,\beta)}(z)
  \]
  with $c_\beta \in \Z[z^{\pm1}]$ depending only on $L$, where the sum
  is over all positive permutation braid words $\beta$. That
  $R^m_{S(K,L)}(z)$ is determined by $\mathcal{A}(K)$ at
  $z=q^{1/2}-q^{-1/2}$ for infinitely many values of $q$ follows from
  an application of Theorem A.  Moreover, this uniquely determines
  $R^m_{S(K,L)}$ since it is a Laurent polynomial in $z$.  (Note that
  a choice of $\Z$-valued Maslov potential on $\beta$ lifting the
  $\Z/m$-valued potential on $\beta$ is required for the
  representation numbers appearing on the right hand side of
  (\ref{eq:thmAug}) to be defined, but this choice can be made
  independent from $K$--e.g. by lifting to the range $[0, m)$.)
\end{proof}

\section{Representation numbers from colored ruling
  polynomials} \label{sec:color} The formula from Theorem \ref{thm:A}
shows how augmentation numbers (and hence also ruling polynomials) of
satellites of $K$ can be written in terms of higher dimensional
representation numbers of $K$.  In this section, we provide a
converse-type formula by realizing a total $n$-dimensional
representation number, where no restriction is made on the image of
$t$, in terms of satellite ruling polynomials.  In fact, the
particular combination of satellite ruling polynomials used is itself
a natural object from the point of view of quantum topology, as it is
the Legendrian analog of the colored HOMFLY-PT polynomial.

\begin{definition}  Let $m\geq 0$ have $m\,|\, 2r(K)$ and $m\neq 1$.  
  Define the {\bf $n$-colored $m$-graded ruling polynomial} of $K$ to be
  \[
  R^m_{n,K}(q) = \frac{1}{\alpha_n} \sum_{\beta \in S_n}
  q^{l(\beta)/2} R^m_{S(K, \beta)}(z)
  \]
  where each positive permutation braid $\beta$ has Maslov potential
  $0$; $l(\beta)$ is the length of $\beta$; $z = q^{1/2}-q^{-1/2}$;
  and
  \[
  \alpha_n = (q^{1/2})^{n(n-1)/2}[n][n-1] \cdots [1]
  \]
  with $[r] = \frac{q^{r/2}-q^{-r/2}}{q^{1/2}-q^{-1/2}}$.
\end{definition}

\begin{remark} 
  This is in strict analogy with the definition of the colored
  HOMFLY-PT polynomial.  See Section \ref{sec:coloredHOMFLY}, below.
\end{remark}

\begin{definition}
  We refer to the $m$-graded representation number with $V=
  \mathbb{F}_q^n$ concentrated in degree $0$ (necessarily with $d=0$)
  and $T_1 = GL(V)$ as the {\bf total $n$-dimensional representation
    number} of $K$, and write
  \[
  \mbox{Rep}_m(K, \mathbb{F}_q^n) := \mbox{Rep}_m(K, (V,0), GL(n,
  \mathbb{F}_q)).
  \]
\end{definition}

\begin{theorem} \label{thm:color} Let 
  $K \subset J^1\R$ be Legendrian, and let $m\geq 0$ have $m\,|\, 2r(K)$.  In addition assume $m\neq 1$, and that $r(K)=0$ if $m$ is even.  Then,
  \[
  \mbox{Rep}_m(K, \mathbb{F}_q^n) = R^m_{n, K}(q).
  \]
\end{theorem}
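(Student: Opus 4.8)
The plan is to evaluate the right-hand side $R^m_{n,K}(q)$ by applying Theorem \ref{thm:A} term-by-term and then using the Bruhat decomposition of Proposition \ref{prop:bruhat} to reassemble the pieces into a single total representation number. Each braid $\beta$ in the definition of $R^m_{n,K}(q)$ is a reduced positive permutation braid carrying the zero Maslov potential, so Proposition \ref{prop:permutation} guarantees that $\beta$ is $\mathbb{F}_q$-path injective and path generated, and Theorem \ref{thm:A} applies. Since the Maslov potential is identically $0$, the graded vector space $V_\beta = \mathbb{F}_q^n$ is concentrated in degree $0$; the hypothesis $m\neq 1$ then forces the only strictly upper triangular differential of degree $+1 \bmod m$ to be $d=0$, because a nonzero such $d$ would require $0 \equiv 1 \bmod m$. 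For the same reason $\lambda_m(\beta) = l(\beta)$ and $B^m_\beta = B_\beta$. Thus Theorem \ref{thm:A} collapses to
\[
R^m_{S(K,\beta)}(z)\big\vert_{z=q^{1/2}-q^{-1/2}} = q^{-l(\beta)/2}(q^{1/2}-q^{-1/2})^{-n}\,\rrep_m(K,(V_\beta,0),B_\beta),
\]
and after multiplying by $q^{l(\beta)/2}$ and summing, the length-dependent powers of $q$ cancel, leaving
\[
R^m_{n,K}(q) = \frac{(q^{1/2}-q^{-1/2})^{-n}}{\alpha_n}\sum_{\beta\in S_n}\rrep_m(K,(V_\beta,0),B_\beta).
\]

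Next I would unwind the reduced representation numbers into raw counts. With $\mathcal{B} = -\mathit{End}(\mathbb{F}_q^n)$ concentrated in degree $0$ one has $\delta = 0$, $\ell = 1$, $(\mathcal{B}^m_0)^*\cap\ker\delta = \mathit{GL}(n,\mathbb{F}_q)$, and $|\mathcal{B}^m_0| = q^{n^2}$, while the Euler-characteristic factor $C := \lim_N \prod_k |\mathcal{B}^m_k|^{-\chi^k/2}$ depends only on $K$ and $n$, not on $\beta$. Combining Definitions \ref{def:repNum} and \ref{def:reducedRep} gives $\rrep_m(K,(V_\beta,0),B_\beta) = q^{-n^2/2}\,C\,|\overline{\mbox{Rep}}_m(K,(V_\beta,0),B_\beta)|$. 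The conceptual heart of the argument is then Proposition \ref{prop:bruhat}: since the $V_\beta$ are a common space $\mathbb{F}_q^n$ and $\mathit{GL}(n,\mathbb{F}_q) = \bigsqcup_{\beta\in S_n} B_\beta$, the constraint $f(t)\in B_\beta$ partitions the full set of $m$-graded representations on $\mathbb{F}_q^n$ (with unrestricted $f(t)\in \mathit{GL}(n,\mathbb{F}_q)$), so
\[
\sum_{\beta\in S_n}\big|\overline{\mbox{Rep}}_m(K,(V_\beta,0),B_\beta)\big| = \big|\overline{\mbox{Rep}}_m(K,\mathbb{F}_q^n)\big|.
\]
Expressing the right side through Definition \ref{def:repNum} as $C^{-1}|\mathit{GL}(n,\mathbb{F}_q)|\,\mbox{Rep}_m(K,\mathbb{F}_q^n)$ then yields $\sum_\beta \rrep_m(K,(V_\beta,0),B_\beta) = q^{-n^2/2}|\mathit{GL}(n,\mathbb{F}_q)|\,\mbox{Rep}_m(K,\mathbb{F}_q^n)$, so the factor $C$ cancels entirely.

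Finally I would verify the purely combinatorial normalization identity
\[
\frac{(q^{1/2}-q^{-1/2})^{-n}}{\alpha_n}\,q^{-n^2/2}\,|\mathit{GL}(n,\mathbb{F}_q)| = 1.
\]
Using $|\mathit{GL}(n,\mathbb{F}_q)| = q^{n(n-1)/2}\prod_{j=1}^n (q^j-1)$ and rewriting $[r] = q^{-(r-1)/2}(q^r-1)/(q-1)$ shows $\alpha_n = \prod_{r=1}^n (q^r-1)/(q-1)^n$; substituting and collecting powers of $q$ (the products $\prod(q^j-1)$ cancel) leaves the exponent $\tfrac{n}{2} - \tfrac{n^2}{2} + \tfrac{n(n-1)}{2} = 0$, confirming the identity and hence $R^m_{n,K}(q) = \mbox{Rep}_m(K,\mathbb{F}_q^n)$. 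The main obstacle I anticipate is the bookkeeping of normalization factors in the middle step — verifying that the $\beta$-independent factor $C$ and the factors $|\mathit{GL}(n,\mathbb{F}_q)|^{\pm1}$ and $q^{\pm n^2/2}$ produced by passing between $\overline{\mbox{Rep}}$, $\mbox{Rep}$, and $\rrep$ cancel exactly so that only the clean constant above survives. The use of the Bruhat decomposition to merge the path-subset-restricted counts into a single unrestricted count is the key structural input, and it is precisely why $R^m_{n,K}$ is assembled from this particular linear combination of permutation braids.
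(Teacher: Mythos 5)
Your proposal is correct and follows essentially the same route as the paper's proof: apply Theorem \ref{thm:A} to each reduced positive permutation braid (with $d=0$ forced by $m\neq 1$, $\lambda_m(\beta)=l(\beta)$, and $B^m_\beta=B_\beta$ since the Maslov potential vanishes), merge the path-subset-restricted counts via the Bruhat decomposition of Proposition \ref{prop:bruhat}, and check that the normalization constants cancel. The only cosmetic difference is that you pass through the raw counts $\lvert\overline{\mbox{Rep}}_m\rvert$ where the paper works directly with the reduced representation numbers, which amounts to the same bookkeeping.
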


\begin{proof} By Legendrian invariance of $R^m$, we may assume the
  $\beta$ used in computing $R^m_{n,K}$ have reduced braid words.
  
  Note that since $V_\beta$ is concentrated in degree $0$ and $m \neq
  1$, the only $d:V_\beta \rightarrow V_\beta$ of degree $+1$ mod $m$
  is $d=0$.  Thus, using Theorem~A, we compute
  \begin{align}
    R^m_{n,K}(q)=& \frac{1}{\alpha_n} \sum_{\beta \in S_n} q^{l(\beta)/2} \cdot R^m_{S(K, \beta)}(z) \notag \\
    =& \frac{1}{\alpha_n} \sum_{\beta \in S_n} q^{l(\beta)/2}
    q^{-\lambda_m(\beta)/2}z^{-n} \cdot \rrep_m(K,(\mathbb{F}_q^n,0),
    B^m_\beta). \notag
  \end{align}
  Now, since all $p_i$ and $x_{i,j}$ generators have degree $0$,
  $\lambda_m(\beta) = l(\beta)$ and $B^m_\beta = B_\beta$, so the
  above becomes
  \begin{align}
    & \frac{1}{\alpha_n} \sum_{\beta \in S_n} z^{-n} \cdot \rrep_m(K,(\mathbb{F}_q^n,0), B_\beta). \notag \\
    \quad \quad \quad &\quad=\frac{1}{\alpha_nz^n} \cdot \rrep_m(K,(\mathbb{F}_q^n,0), GL(\mathbb{F}_q^n)) \notag \\
    &\quad=\frac{1}{(q^{1/2})^{(n^2-n)/2}\prod_{m=1}^nq^{-m/2}(q^{m}-1)}
    \cdot
    |\mathcal{B}_0|^{-1/2}|GL(\mathbb{F}_q^n)|\cdot\mbox{Rep}_m(K,\mathbb{F}_q^n) \label{eq:line4}
  \end{align}
  where at the first equality we used that $GL(\mathbb{F}_q^n) =
  \bigsqcup_{\beta \in S_n} B_\beta$ by Proposition~\ref{prop:bruhat},
  and at the last we used the definition of the reduced representation
  numbers.  Using that $|\mathcal{B}_0| = q^{n^2}$ and
  \[
  |GL(\mathbb{F}_q^n)| = (q^{n}-1)(q^n-q) \cdots(q^n-q^{n-1}) =
  q^{(n^2-n)/2} \prod_{m=1}^n(q^m-1),
  \]
  (\ref{eq:line4}) simplifies to
  \begin{align*}
    & \frac{q^{-n^2/2}
      q^{(n^2-n)/2}}{(q^{1/2})^{(n^2-n)/2}(q^{1/2})^{(-n^2-n)/2}}
    \cdot \mbox{Rep}_m(K, \mathbb{F}_q^n) = \mbox{Rep}_m(K,
    \mathbb{F}_q^n).
  \end{align*}

\end{proof}

\begin{remark}
  We did not define colored ruling polynomials in the case $m=1$,
  since the above proof breaks down.  (When applying Theorem
  \ref{thm:A}, the many differentials with $d \neq 0$ need to be
  included.)  We leave the correct definition of $R^1_{n,K}$, i.e. one
  where Theorem \ref{thm:color} will hold, as an open problem.
\end{remark}

\subsection{Relation with colored HOMFLY-PT
  polynomials} \label{sec:coloredHOMFLY}

The {\bf framed HOMFLY-PT polynomial} is an invariant of framed links
in $\R^3$ that assigns to $K \subset \R^3$ a Laurent polynomial ${P_K\in\Z[a^{\pm1},z^{\pm1}]}$ which is characterized by the skein
relations (pictured with blackboard framing)
\begin{align}
  & \figgg{images/HSR1}-\figgg{images/HSR2} = z \figgg{images/HSR3}, \\
  & \figgg{images/HSR4}= a \figgg{images/HSR6}, \quad \quad \quad
  \figgg{images/HSR5} = a^{-1} \figgg{images/HSR6},
\end{align}
together with a choice of normalization for the unknot (framed with
self linking number $0$) which we take to be
$P_{\includegraphics[scale=.5]{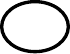}} = (a-a^{-1})/z$.  The
{\bf unframed HOMFLY-PT polynomial}, $\widehat{P}_K \in
\Z[a^{\pm1},z^{\pm1}]$, is then defined by
\[
\widehat{P}_K = a^{-w(K)}P_K
\]
where $w(K)$ is the writhe, i.e. self linking number, of $K$.  Note
that $\widehat{P}_K$ is independent of the framing of $K$.

\begin{definition}
  Given a framed knot $K \subset \R^3$ and a positive $n$-stranded
  braid $\beta$ with the blackboard framing, a diagram for the
  (framed) satellite link $S(K,\beta) \subset \R^3$ arises from
  placing the projection annulus for $\beta$ into an annular
  neighborhood of a diagram for $K$ with blackboard framing.  The {\bf
    framed $n$-colored HOMFLY-PT polynomial} of $K$ is defined by
  \[
  P_{n,K}(a,q) = \frac{1}{\alpha_n} \sum_{\beta \in S_n}
  q^{l(\beta)/2} P_{S(K, \beta)}(a,z)\big|_{z=q^{1/2}-q^{-1/2}}.
  \]
  An {\bf unframed $n$-colored HOMFLY-PT polynomial} can be defined by
  the normalization
  \[
  \widehat{P}_{n,K}(a,q) = \left(a^n q^{n(n-1)/2}\right)^{-w(K)}
  P_{n,K}(a,q).
  \]
\end{definition}

\begin{remark}
  \begin{enumerate}
  \item That $\widehat{P}_{n,K}(a,q)$ is independent of the choice of
    framing on $K$ follows from Theorem 5.5 of \cite{AM} (with $a =
    v^{-1}$, $q^{1/2} = s$, and $x=1$).
  \item In \cite{AM, MM}, more general colored HOMFLY-PT polynomials
    $P_{\lambda, K}$ are defined where $\lambda$ is a partition.  The
    $n$-colored HOMFLY-PT polynomial is the case where $\lambda = n$
    is a partition with only one part.
  \item Notice that $\widehat{P}_{n,K}(a,q)$ belongs to the sub-ring
    of $\mathbb{Q}(a, q^{1/2})$ that is the localization of
    $\Z[a^{\pm1}, q^{\pm 1/2}]$ obtained from inverting
    $q^{r/2}-q^{-r/2}$, $r \geq 1$.  In particular,
    $\widehat{P}_{n,K}$ is a Laurent polynomial in $a$.
  \end{enumerate}
\end{remark}

We now recall two results relating Legendrian knots with the HOMFLY-PT
polynomial.

\begin{theorem}[\cite{FT}] \label{thm:FT} For any Legendrian link $K
  \subset J^1\R$,
  \[
  \mathit{tb}(K) + \rvert r_{\mathit{tot}}(K)\lvert \leq -\deg_a
  \widehat{P}_K,
  \]
  where $r_{\mathit{tot}}(K)$ is the sum of the rotation numbers of
  the components of $K$.
\end{theorem}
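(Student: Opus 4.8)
The plan is to deduce the estimate from the classical self-linking bound for transverse knots, applied to the two transverse push-offs of $K$. Recall that any Legendrian link $K \subset J^1\R$ has a positive and a negative transverse push-off, $T_+$ and $T_-$, each smoothly isotopic to $K$ (so that $\widehat{P}_{T_\pm} = \widehat{P}_K$), with self-linking numbers
\[
\mathit{sl}(T_\pm) = \mathit{tb}(K) \mp r_{\mathit{tot}}(K).
\]
These formulas follow from the front projection: the push-offs differ from $K$ by a $C^0$-small perturbation, and the signed cusp count computing $r_{\mathit{tot}}$ is exactly the discrepancy between the contact framing $\mathit{tb}$ and the self-linking of each push-off. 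Granting this, it suffices to prove the transverse bound $\mathit{sl}(T) \le -\deg_a \widehat{P}_T$ for an arbitrary transverse knot $T$: applying it to $T_+$ and to $T_-$ gives $\mathit{tb}(K) + r_{\mathit{tot}}(K) \le -\deg_a \widehat{P}_K$ and $\mathit{tb}(K) - r_{\mathit{tot}}(K) \le -\deg_a \widehat{P}_K$, and together these two inequalities yield $\mathit{tb}(K) + |r_{\mathit{tot}}(K)| \le -\deg_a \widehat{P}_K$. The key point is that the absolute value on $r_{\mathit{tot}}$ emerges automatically from having the two push-offs with opposite $r$-contributions.

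For the transverse bound itself, I would invoke Bennequin's theorem to present $T$ as the closure of an $n$-stranded braid $\beta$ with exponent sum $e$, so that $\mathit{sl}(T) = e - n$. The closure diagram has exactly $n$ Seifert circles and writhe $e$, so by the Morton--Franks--Williams inequality, in the standard normalization with skein relation $v^{-1}P^{\mathrm{std}}_+ - v P^{\mathrm{std}}_- = z P^{\mathrm{std}}_0$ and unknot value $1$, one has $\min\deg_v P^{\mathrm{std}}_T \ge e - n + 1 = \mathit{sl}(T) + 1$. The present paper's normalization is related by $\widehat{P}_K = \frac{a - a^{-1}}{z}\,P^{\mathrm{std}}_K$ under the substitution $a = v^{-1}$, which one checks by comparing skein relations and the unknot value $(a-a^{-1})/z$. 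Computing the effect on degrees gives $-\deg_a \widehat{P}_T = \min\deg_v P^{\mathrm{std}}_T - 1 \ge \mathit{sl}(T)$, exactly the desired bound; note that the $-1$ produced by the $(a-a^{-1})/z$ factor precisely cancels the $+1$ in Morton's inequality.

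I expect the main obstacle to be the careful matching of conventions rather than any deep geometry. The framed HOMFLY-PT used here carries no $a$ in the crossing relation (curls contribute $a^{\pm1}$) and uses the nonstandard unknot value $(a-a^{-1})/z$, whereas Morton--Franks--Williams is usually stated for the Reidemeister-invariant normalization with unknot $1$; one must track the substitution $a = v^{-1}$ together with the global factor $\frac{a-a^{-1}}{z}$ so that the degree shift comes out correctly and the estimate is sharp. The unknot case, where $\mathit{tb} = -1$, $r_{\mathit{tot}} = 0$, and $\deg_a \widehat{P} = 1$ force equality, is a useful check that the bookkeeping introduces no spurious $\pm 1$. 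A secondary point is verifying the push-off self-linking identities with the correct sign conventions and confirming that for multi-component links it is the sum $r_{\mathit{tot}}$ of rotation numbers (not the gcd convention used elsewhere in the paper) that governs the two push-offs.
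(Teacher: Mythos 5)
This theorem is quoted from \cite{FT} and the paper supplies no proof of its own, so there is nothing internal to compare against; your argument is correct and is essentially the standard (and original) proof of the Fuchs--Tabachnikov bound. The reduction to the two transverse push-offs with $\mathit{sl}(T_\pm)=\mathit{tb}(K)\mp r_{\mathit{tot}}(K)$, followed by Bennequin's braid presentation and the Morton--Franks--Williams inequality, is exactly the established route, and your bookkeeping between the framed normalization $\widehat{P}=a^{-w}P$ with unknot value $(a-a^{-1})/z$ and the standard skein normalization (via $a=v^{-1}$, shifting the degree by the $+1$ from the prefactor against the $+1$ in $e-n+1$) checks out, as does the sharpness check on the unknot.
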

Recall that any Legendrian knot $K$ has a framing specified by the
contact planes, and the self-linking number of this framing is
$\mathit{tb}(K)$.  Therefore, an equivalent statement of Theorem
\ref{thm:FT} is that
\[
\deg_a P_K \leq -|r_{\mathit{tot}}(K)|.
\]
In particular, the framed HOMFLY-PT polynomial of a Legendrian knot
does not contain any positive powers of $a$, so it is possible to
specialize $a^{-1}=0$.

\begin{theorem}[\cite{R2}]  \label{thm:R2}
  For any Legendrian link $K \subset J^1\R$, the $2$-graded ruling
  polynomial is the specialization
  \[
  R^2_K(z) = P_K(a,z)\big|_{a^{-1} =0}.
  \]
\end{theorem}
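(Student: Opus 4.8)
The plan is to characterize both sides of the asserted identity by the skein theory recalled in Section \ref{sec:background}. Write $Q_K(z) = P_K(a,z)\big|_{a^{-1}=0}$ for the specialization. The first point is that this is well-defined: by Theorem \ref{thm:FT} one has $\deg_a P_K \leq -|r_{\mathit{tot}}(K)| \leq 0$, so $P_K$ is a Laurent polynomial in $a$ with no positive powers of $a$, and setting $a^{-1}=0$ simply extracts the coefficient of $a^0$. Since $R^2_K(z)$ is, by the discussion following the skein relations (i)--(iii), the \emph{unique} Legendrian invariant satisfying those three relations together with the normalization $R^2_U(z)=z^{-1}$ on the standard unknot, it suffices to verify that the assignment $K \mapsto Q_K(z)$ satisfies the same relations and normalization.

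To compare the two skein theories I would first fix a dictionary between Legendrian fronts and framed link diagrams: read a front as an ordinary diagram by declaring, at each crossing, the strand of smaller slope to be the overstrand, and equip it with the contact framing, whose self-linking number is $\mathit{tb}(K)$. With this convention the defining relations of $P_K$ become local relations among fronts. The oriented HOMFLY-PT crossing relation $P_+ - P_- = z\,P_0$ is the source of the ruling relation (i): applying it at a front crossing and then setting $a^{-1}=0$ should reproduce the two difference terms on the left of (i) together with the oriented resolution on the right, the factors $\delta_1,\delta_2$ arising because the oriented HOMFLY-PT smoothing coincides with the front smoothing exactly when the two strands carry equal Maslov potential mod $2$; when the potentials differ the oriented resolution forces extra factors of $a^{\pm1}$, and the corresponding term is annihilated under $a^{-1}=0$.

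The remaining relations come from the kink relations $P_{\pm}=a^{\pm1}P$. These control the behavior at cusps and govern the zig-zag relation (ii): a zig-zag is a Legendrian stabilization, which under the dictionary above is a diagram carrying an extra kink, so its HOMFLY-PT value is multiplied by a nontrivial power of $a$ and dies under $a^{-1}=0$, yielding (ii); relation (iii) then follows formally, as noted in Section \ref{sec:background}. For the normalization, the standard unknot resolves to the $(-1)$-framed round circle, so $P_U = a^{-1}(a-a^{-1})/z = (1-a^{-2})/z$, and hence $Q_U = z^{-1}$ as required.

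I expect the crux to be the bookkeeping in the second step: matching the parity factors $\delta_1,\delta_2$ of relation (i) to the oriented HOMFLY-PT relation requires tracking, at each front crossing, the relative orientations of the two strands, the sign of the crossing in the resolved diagram, and the compensating framing/writhe corrections, and then checking that precisely the orientation-incompatible terms acquire a surviving power of $a^{-1}$. Handling the cusps and orientations uniformly rather than through a proliferation of cases is where most of the effort lies, and the well-definedness input from Theorem \ref{thm:FT} is used throughout to guarantee that the specialization never silently discards an $a^0$ contribution.
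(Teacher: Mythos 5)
First, a point of comparison: the paper does not prove Theorem~\ref{thm:R2} at all --- it is imported from \cite{R2} --- so there is no in-paper argument to measure yours against. That said, your overall strategy (show that $Q_K:=P_K(a,z)\big|_{a^{-1}=0}$ is a Legendrian invariant satisfying the skein relations (i)--(iii) and the unknot normalization, then invoke the uniqueness statement from Section~\ref{sec:background}) is the right one and is essentially how the result is established in \cite{R2}. Your well-definedness argument via the restated Fuchs--Tabachnikov bound $\deg_a P_K\leq -|r_{\mathit{tot}}(K)|\leq 0$, your treatment of relation (iii), and the normalization $P_U=a^{-1}(a-a^{-1})/z$ are all correct. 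One caveat on relation (ii): you say the stabilization multiplies the polynomial by ``a nontrivial power of $a$,'' but the argument only works because that power is specifically $a^{-1}$ (Legendrian stabilization always \emph{decreases} the contact framing); combined with $\deg_a P\leq 0$ for the destabilized front this forces $\deg_a\leq -1$ and hence vanishing at $a^{-1}=0$. A factor of $a^{+1}$ would not be killed by the specialization, so the sign of the kink must be pinned down.

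The genuine gap is the verification of relation (i), which you flag as ``the crux'' and then do not carry out; it is the technical heart of the theorem rather than bookkeeping. Concretely, the HOMFLY-PT relation $P_+-P_-=zP_0$ has three local diagrams while (i) has four, so one must (a) identify the crossing-switched diagram of one front crossing with the other front configuration on the left of (i) up to framed isotopy, keeping track of the discrepancy between the blackboard framing of the resolved diagram and the contact framing (the smoothings can create or destroy cusps, and $\mathit{tb}$ equals the writhe minus the number of right cusps, so each such change shifts the $a$-power); and (b) show that precisely the orientation-incompatible resolutions acquire a strictly negative $a$-degree and die under $a^{-1}=0$, which is where the factors $\delta_1,\delta_2$ come from. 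Neither computation appears in your write-up, so what you have is a correct and well-motivated outline of the proof from \cite{R2}, not a complete proof.
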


We now generalize these results to apply to the $n$-colored HOMFLY-PT
and ruling polynomials.

\begin{theorem} \label{thm:nHOMFLY} Let $K \subset J^1\R$ be a
  (connected) Legendrian knot.  For all $n \geq 1$,
  \begin{align}
    \label{eq:tb1} \mathit{tb}(K) + \lvert r(K)\rvert &\leq
    \frac{1}{n} \deg_a \widehat{P}_{n,K}, \text{ and }\\
    \label{eq:tb2} \deg_a P_{n,K} &\leq -n \cdot |r(K)|.
  \end{align}
  Moreover, the specialization of $P_{n,K}$ at $a^{-1} = 0$ is the
  2-graded colored ruling polynomial, i.e.
  \begin{equation} \label{eq:tb3} R^2_{n,K}(q) =
    P_{n,K}(a,q)\big|_{a^{-1} = 0}.
  \end{equation}
  If, in addition $r(K)=0$, then
  \begin{equation} \label{eq:tb4} \mbox{Rep}_2(K, \mathbb{F}_q^n) =
    P_{n,K}(a,q)\big|_{a^{-1} = 0}.
  \end{equation}
\end{theorem}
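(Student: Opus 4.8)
The plan is to obtain \eqref{eq:tb4} as an immediate consequence of two results already in hand: the specialization formula \eqref{eq:tb3} established just above in this same theorem, and Theorem~\ref{thm:color} in the case $m=2$. First I would check that the hypotheses of Theorem~\ref{thm:color} are met: we have $m=2$, so $m\neq 1$, and the extra assumption $r(K)=0$ in the present statement is precisely the condition ``$r(K)=0$ if $m$ is even'' required there (the divisibility $m\mid 2r(K)$ being automatic for $m=2$). Theorem~\ref{thm:color} then yields
\[
\mbox{Rep}_2(K,\mathbb{F}_q^n) = R^2_{n,K}(q).
\]
Combining this with \eqref{eq:tb3}, which asserts $R^2_{n,K}(q) = P_{n,K}(a,q)\big|_{a^{-1}=0}$, gives \eqref{eq:tb4} by transitivity. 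So at the top level there is essentially no computation to perform; the content is entirely in lining up the hypotheses.

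For completeness I would recall where the substantive input \eqref{eq:tb3} comes from, since that is where the real work lies. Both $R^2_{n,K}(q)$ and $P_{n,K}(a,q)$ are defined by the \emph{same} linear combination $\tfrac{1}{\alpha_n}\sum_{\beta\in S_n} q^{l(\beta)/2}(\,\cdot\,)$ of braid satellites, applied respectively to the $2$-graded satellite ruling polynomial $R^2_{S(K,\beta)}(z)$ and to the framed satellite HOMFLY-PT polynomial $P_{S(K,\beta)}(a,z)$, both evaluated at $z=q^{1/2}-q^{-1/2}$. Theorem~\ref{thm:R2} identifies, term by term, $R^2_{S(K,\beta)}(z) = P_{S(K,\beta)}(a,z)\big|_{a^{-1}=0}$. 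Since the prefactor $\alpha_n$ and the weights $q^{l(\beta)/2}$ do not involve $a$, the specialization $a^{-1}=0$ commutes with the sum, and \eqref{eq:tb3} follows.

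The only points demanding genuine care — and hence the main potential obstacle — lie not in the chaining but in aligning the building blocks. At the level of Theorem~\ref{thm:R2} one must confirm that the framed topological link underlying the Legendrian satellite $S(K,\beta)$ carries the framing for which that theorem is stated, so that the $a^{-1}=0$ specialization of the framed HOMFLY-PT polynomial genuinely returns the $2$-graded ruling polynomial; here Theorem~\ref{thm:FT} guarantees that the framed HOMFLY-PT polynomial of a Legendrian link has no positive powers of $a$, which is exactly what makes setting $a^{-1}=0$ legitimate. Granting these compatibilities, \eqref{eq:tb4} itself is a purely formal corollary, requiring nothing beyond invoking \eqref{eq:tb3} and Theorem~\ref{thm:color} and verifying that $r(K)=0$ places us in the even-$m$ regime of the latter.
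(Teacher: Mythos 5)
Your treatment of \eqref{eq:tb3} and \eqref{eq:tb4} is correct and matches the paper's argument exactly: \eqref{eq:tb3} follows by applying Theorem~\ref{thm:R2} term by term inside the defining linear combination (which involves no $a$-dependence in $\alpha_n$ or the weights $q^{l(\beta)/2}$, so the specialization $a^{-1}=0$ commutes with the sum), and \eqref{eq:tb4} is then immediate from Theorem~\ref{thm:color} with $m=2$ once one checks, as you do, that $m\neq 1$ and $r(K)=0$. Your remark that Theorem~\ref{thm:FT} is what legitimizes setting $a^{-1}=0$ for the Legendrian satellites $S(K,\beta)$ is also the right observation.

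However, the theorem also asserts the two inequalities \eqref{eq:tb1} and \eqref{eq:tb2}, and your proposal never proves them; this is a genuine omission rather than a routine verification. The paper's argument for \eqref{eq:tb2} is to apply Theorem~\ref{thm:FT} to each Legendrian satellite $S(K,\beta)$ appearing in the definition of $P_{n,K}$: these satellites have total rotation number $r_{\mathit{tot}}(S(K,\beta)) = n\cdot r(K)$, so each $P_{S(K,\beta)}$ has $a$-degree at most $-n\lvert r(K)\rvert$, and hence so does the linear combination $P_{n,K}$. Then \eqref{eq:tb1} follows from \eqref{eq:tb2} together with the definition $\widehat{P}_{n,K} = \bigl(a^n q^{n(n-1)/2}\bigr)^{-w(K)} P_{n,K}$ and the fact that the contact framing of a Legendrian knot has $w(K) = \mathit{tb}(K)$, which gives $\deg_a \widehat{P}_{n,K} = \deg_a P_{n,K} - n\cdot \mathit{tb}(K)$. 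You allude to the $a$-degree bound only in the weak form ``no positive powers of $a$,'' which suffices to make the specialization $a^{-1}=0$ well defined but does not yield the quantitative bounds \eqref{eq:tb1} and \eqref{eq:tb2}. Adding the short paragraph above would complete the proof.
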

\begin{proof}
 
  For a Legendrian $K \subset J^1\R$, the satellites $S(K,\beta)$
  appearing in the definition of $P_{n,K}$ are themselves Legendrian
  links with $r_{\mathit{tot}}(K) = n \cdot r(K)$, so Theorem
  \ref{thm:FT} gives the inequality
  \[
  \deg_aP_{n,K} = \deg_a\left( \frac{1}{\alpha_n} \sum_{\beta \in S_n}
    q^{l(\beta)/2} P_{S(K, \beta)}\right) \leq - n \cdot |r(K)|.
  \]
  Inequality \eqref{eq:tb1}, then follows from \eqref{eq:tb2} and the
  definition of $\widehat{P}_{n,K}$ since
  \[
  \deg_a \widehat{P}_{n,K} = \deg_a P_{n,K} - n \cdot \mathit{tb}(K)
  \leq -n \cdot (\lvert r(K)\rvert+\mathit{tb}(K)).
  \]

  For (\ref{eq:tb3}), we just apply Theorem \ref{thm:R2} to compute
  \begin{align*}
    R^2_{n,K}(q) &= \frac{1}{\alpha_n} \sum_{\beta \in S_n}
    q^{l(\beta)/2} R^2_{S(K, \beta)}(q^{1/2}-q^{-1/2}) \\ &=
    \frac{1}{\alpha_n} \sum_{\beta \in S_n} q^{l(\beta)/2} P_{S(K,
      \beta)}(a,q^{1/2}-q^{-1/2})\big|_{a^{-1}=0} =
    P_{n,K}\big|_{a^{-1}=0}.
  \end{align*}
  Finally, (\ref{eq:tb4}) is (\ref{eq:tb3}) combined with Theorem
  \ref{thm:color}.
\end{proof}

\begin{corollary} \label{cor:sharp} \begin{enumerate}
  \item The $2$-graded total $n$-dimensional representation number
    $\mbox{Rep}_2(K, \mathbb{F}_q^n)$ depends only on the underlying
    framed knot type of $K$.
  \item The Chekanov-Eliashberg algebra of $K$ has a $2$-graded
    representation on $(\mathbb{F}_q^n,0)$ for some $q$ if and only if
    $\mathit{tb}(K) = \frac{1}{n} \deg_a \widehat{P}_{n,K}$. In
    particular, if such a representation exists, then $K$ must have
    maximal Thurston-Bennequin number within its smooth isotopy class.
  \end{enumerate}
\end{corollary}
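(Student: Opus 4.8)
The plan is to read off both parts from Theorem~\ref{thm:nHOMFLY}, exploiting that $\mbox{Rep}_2(K,\mathbb{F}_q^n)$ is, up to strictly positive normalizing factors, an honest count of representations.

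For part (1): since $m=2$ is even, $\mbox{Rep}_2(K,\mathbb{F}_q^n)$ is only defined when $r(K)=0$, so the hypothesis of \eqref{eq:tb4} always holds and we may write $\mbox{Rep}_2(K,\mathbb{F}_q^n)=P_{n,K}(a,q)\big|_{a^{-1}=0}$. The framed $n$-colored HOMFLY-PT polynomial $P_{n,K}$ is assembled from the framed links $S(K,\beta)$ via the HOMFLY-PT skein relations, so it is an invariant of the underlying framed knot type; hence so is its specialization at $a^{-1}=0$, giving (1) at once.

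For part (2): I first note that every normalizing factor appearing in Definitions~\ref{def:repNum} and~\ref{def:reducedRep} is strictly positive (powers of $q$ and cardinalities of groups), and for the total representation number $\delta=0$ and $\mathbf{T}=GL(V)$; consequently $\mbox{Rep}_2(K,\mathbb{F}_q^n)\neq 0$ if and only if $\overline{\mbox{Rep}}_2(K,\mathbb{F}_q^n)\neq\emptyset$, i.e.\ $\mathcal{A}(K)$ admits a $2$-graded representation on $(\mathbb{F}_q^n,0)$. By \eqref{eq:tb2} with $r(K)=0$ one has $\deg_a P_{n,K}\le 0$, so setting $a^{-1}=0$ simply extracts the coefficient $c_0(q)$ of $a^0$ in $P_{n,K}$; by \eqref{eq:tb4}, $\mbox{Rep}_2(K,\mathbb{F}_q^n)=c_0(q)$ for every prime power $q$. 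Since $c_0$ is a fixed Laurent function of $q^{1/2}$, it is nonzero at some prime power precisely when it is not identically zero (a nonzero such function has only finitely many roots, while there are infinitely many prime powers), and, given $\deg_a P_{n,K}\le 0$, this is exactly the condition $\deg_a P_{n,K}=0$. Finally I would convert this into the stated equality via $\deg_a\widehat{P}_{n,K}=\deg_a P_{n,K}-n\cdot\mathit{tb}(K)$ (the identity used in the proof of Theorem~\ref{thm:nHOMFLY}, where $w(K)=\mathit{tb}(K)$): thus $\deg_a P_{n,K}=0$ if and only if $\mathit{tb}(K)=\tfrac{1}{n}\deg_a\widehat{P}_{n,K}$, which closes the equivalence.

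For the ``in particular'' clause, I would invoke that $\widehat{P}_{n,K}$ is framing independent, so $\tfrac{1}{n}\deg_a\widehat{P}_{n,K}$ is a smooth knot invariant, the same for every Legendrian representative $K'$ of the class. Applying \eqref{eq:tb1} to each such $K'$ gives $\mathit{tb}(K')\le \tfrac{1}{n}\deg_a\widehat{P}_{n,K}$, so our $K$, achieving equality, maximizes $\mathit{tb}$ in its smooth isotopy class. The step I expect to be the real (if modest) obstacle is the passage from ``nonvanishing for some prime power $q$'' to ``nonvanishing as a function of $q$'': this needs $\mbox{Rep}_2(K,\mathbb{F}_q^n)$ to be recognized as the evaluation of one fixed Laurent function with finitely many roots, together with the positivity of the normalization so that nonvanishing genuinely detects a representation.
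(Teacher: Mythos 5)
Your argument is correct and is precisely the derivation the paper intends: Corollary \ref{cor:sharp} is stated without a separate proof, both parts being read off from Theorem \ref{thm:nHOMFLY} exactly as you do --- (1) from framed-isotopy invariance of $P_{n,K}$ together with (\ref{eq:tb4}), and (2) from the strict positivity of the normalizing factors, the finitely-many-roots argument over infinitely many prime powers, and the identity $\deg_a\widehat{P}_{n,K}=\deg_a P_{n,K}-n\cdot\mathit{tb}(K)$. The one caveat is a sign you inherit from the source: that identity actually converts $\deg_a P_{n,K}=0$ into $\mathit{tb}(K)=-\tfrac{1}{n}\deg_a\widehat{P}_{n,K}$, consistent with the minus sign in Theorem \ref{thm:FT}, so the missing minus in (\ref{eq:tb1}) and in the corollary's statement is a typo of the paper rather than a gap in your proof.
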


\begin{remark}
  The final statement of part (2) of the Corollary was proven for the
  special case $q=2$ in \cite[Theorem 4.9]{NgR}.
\end{remark}

\begin{example}
  For the Legendrian $m(5_2)$ knot, $K$, from Figure \ref{fig:m52}, we
  used the HOMFLY-PT polynomial implementation from \textit{Sage} to
  compute that the $2$-colored HOMFLY-PT polynomial is
  \begin{align*}
    P_{2,K}(a,q) &=  P_{2,O}(a,q) \cdot a^{-10}q^{-5} \cdot \bigg(a^8 \left(q^8-q^7-q^6+2 q^5-q^3+q^2\right) \\
    &\quad +a^6\left(q^8+q^7-2 q^6+3 q^4-q^3-q^2+q\right) -a^4 \left(2 q^5-2 q^3+q^2+q-1\right) \\
    &\quad -a^2 \left(q^4-q^2+q+1\right)+q\bigg).
  \end{align*}
  Here, $\displaystyle P_{2,O}(a,q)= \frac{(a-a^{-1})(a q^{1/2}-
    a^{-1}q^{-1/2})}{(q^{1/2}-q^{-1/2})(q-q^{-1})}$ where $O$ is the
  $0$-framed unknot, so we have
  \begin{align*}
    P_{2,K}\big|_{a^{-1}=0} = & \frac{q^{1/2}}{(q^{1/2}-q^{-1/2})(q-q^{-1})}\cdot q^{-5}\left(q^8-q^7-q^6+2 q^5-q^3+q^2\right)  \\
    = & q^{-2}\left[(q^2-1)(q^2-q)\right]^{-1} \left(q^8-q^7-q^6+2
      q^5-q^3+q^2\right).
  \end{align*}
  Note that this agrees precisely with the direct computation of
  $\mbox{Rep}_2(K, \mathbb{F}_q^2)$ from (\ref{eq:Count}).
\end{example}

\bibliographystyle{plain}
\bibliography{Bibliography}{}

\end{document}